\newtheorem{theorem}{Theorem}[section]
\newtheorem{prop}[theorem]{Proposition}
\newtheorem{cor}[theorem]{Corollary}
\newtheorem{lemma}[theorem]{Lemma}
\newtheorem{remark}[theorem]{Remark}
\numberwithin{equation}{section}
\definecolor{Aquamarine}{cmyk}{0.82,0,0.30,0}
\newcommand{\black}{\color{black}}
\newcommand{\jump}[1]{\text{{\rm \textlbrackdbl}}{#1}\text{{\rm \textrbrackdbl}}}
\def\altrolifting{\omega}
\def\ambientsource{\Rn}
\def\apprextliftk{\widehat \varphi_k}
\def\apprextuk{\widehat u_k}
\def\ATeps{{\rm AT}_\var}
\def\ATepsk{{\rm AT}_{\var_k}}
\def\AToneeps{{\widehat{\rm AT}_\var^{\Suno}}}
\def\AToneepsk{{\rm AT}^{\Suno}_{\var_k}}
\def\ATtwoeps{{{\rm AT}_\var^{\Suno}}}
\def\ATtwoepsk{{{\rm AT}_{\var_k}^{\Suno}}}
\def\circle{{\mathbb S}^1}
\def\codone{\onecod}
\def\constlemmabase{C}
\def\constlemmasuccessivo{C}
\def\dimension{n}
\def\dki{d_k^{(i)}}
\def\dist{\,{\rm dist\ }}
\def\domAT{\mathcal A}
\def\domATgrande{\widehat{\mathcal A}_{\Suno}}
\def\domATpiccolo{\domAT_{\Suno}}
\def\etaprimo{\eta'}
\def\etasecondo{\eta''}
\def\eps{\epsilon}
\def\extlift{\widehat \varphi}
\def\extu{{\widehat u}}
\def\extuk{\widehat u_k}
\def\finperset{G}
\def\funzioni_vito_k{\phi_k}
\def\grad{\nabla}
\def\Hausdorff{\mathcal H^{\dimension-1}}
\def\indice{j}
\def\indiceteo{m}
\def\limk{\lim_{k \to +\infty}}
\def\liminfk{\liminf_{k \to +\infty}}
\def\limsupk{\limsup_{k \to +\infty}}
\def\liftexample{\varphi}
\def\livellok{t(k)}
\def\Minkowski{\mathcal M^{\dimension-1}}
\def\minvalueup{m_p[u]}
\def\minvalueu2{m_2[u]}
\def\MMepsk{{\rm MM}_{\var_k}}
\def\ModicaMortolaeps{{\rm MM}_\var}
\def\ModicaMortolaepsk{{\rm MM}_{\var_k}}
\def\ModicaMortola{{\rm MM}}
\def\MS{{\rm MS}}
\def\MSlift{{\rm MS}_{{\rm lift}}}
\def\MSSuno{{\rm MS}_{\circle}}
\def\NN{{\mathbb N}}
\def\nuovoliftingk{\phi_k}
\newcommand{\Om}{\Omega}
\def\onecod{{\dimension-1}}
\def\openAT{\Omega}
\def\opensetAT{\openAT}
\def\opensetcompactness{\Om}
\def\openset{U}
\newcommand{\pip}{(i)}
\newcommand{\pNp}{(N)}
\newcommand{\pNpop}{(N+1)}
\newcommand{\R}{\mathbb{R}}
\def\Rn{{\mathbb R}^\dimension}
\def\sopraaperto{\widehat \Om}
\def\sottoaperto{A}
\def\suk{(u_k)}
\def\Suno{\circle}
\def\supp{\,{\rm supp \ }}
\def\trunkphi{\varphi}
\def\var{\varepsilon}
\newcommand{\dx}{\, {\rm d}x}
\newcommand{\dy}{\, {\rm d}y}
\newcommand{\ds}{\, {\rm d}s}
\newcommand{\dt}{\, {\rm d}t}
\newcommand{\res}      {\mathop{\hbox{\vrule height 7pt width .5pt depth
			0pt\vrule height .5pt width 6pt depth 0pt}}\nolimits}
\title{On jump minimizing liftings for $\mathbb{S}^1$-valued maps 
and connections with Ambrosio-Tortorelli-type $\Gamma$-limits
}
\author{
	Giovanni Bellettini\footnote{
Dipartimento di Ingegneria dell'Informazione e Scienze Matematiche, Universit\`a di Siena, 53100 Siena, Italy,
and International Centre for Theoretical Physics ICTP,
Mathematics Section, 34151 Trieste, Italy.
	E-mail: bellettini@unisi.it
}
\and
	Roberta Marziani\footnote{Dipartimento di Ingegneria dell'Informazione e Scienze Matematiche, Universit\`a di Siena, 53100 Siena, Italy.
	E-mail: roberta.marziani@unisi.it}
	\and
	Riccardo Scala\footnote{Dipartimento di Ingegneria dell'Informazione e Scienze Matematiche, Universit\`a di Siena, 53100 Siena, Italy.
		E-mail: riccardo.scala@unisi.it}
}
\begin{document}
	\maketitle
	
	\begin{abstract}
This paper is concerned with the $\Gamma$-limits
of Ambrosio-Tortorelli-type functionals, for maps
$u$ defined on an open bounded set $\Om \subset \Rn$
and taking values in the unit circle $\mathbb S^1 \subset \R^2$. Depending 
on the domain of the functional, 
two different $\Gamma$-limits are possible, 
one of which is nonlocal, and related to 
the notion of jump minimizing lifting,
i.e., a lifting of a map $u$  whose measure of the jump set
is minimal. {The latter requires ad hoc compactness results for sequences of liftings which, besides being interesting by themselves, also allow to deduce existence of a jump minimizing lifting. }
	\end{abstract}
	
	\noindent {\bf Key words:}~Jump minimizing liftings, $\Gamma$-convergence, 
$\Suno$-valued maps, free boundary problems.

	\vspace{2mm}
	
	\noindent {\bf AMS (MOS) subject clas\-si\-fi\-ca\-tion:} 
	49Q15, 49Q20, 49J45, 58E12.
	
	\section{Introduction}

This paper is devoted to the asymptotic analysis, 
via $\Gamma$-convergence, of regularized free discontinuity 
functionals for maps defined on a connected bounded open set
$\Om \subset \Rn$ 
with Lipschitz boundary
and taking values in the unit circle $\mathbb{S}^1 \subset \R^2$. 
More specifically, 
define the two functional domains

\begin{equation}\label{eq:two_domains}
	\begin{aligned}
		\domATgrande
:=&\left\{(u,v)\in W^{1,1}(\Omega;\mathbb{S}^1)\times W^{1,2}(\Omega)\colon v|\nabla u|\in L^2(\Om),\ 0\le v\le1
		\right\} 
\subset L^1(\Om; \circle) \times L^1(\Om),
		\\
		\domATpiccolo
:=&\left\{(u,v)\in W^{1,2}(\Omega;\mathbb{S}^1)\times W^{1,2}(\Omega)\colon \ 0\le v\le1
		\right\}\subset 
L^1(\Om; \circle) \times L^1(\Om).
	\end{aligned}
\end{equation}
For $\var \in (0,1]$
let us consider the corresponding family of 
Ambrosio-Tortorelli-type functionals
$$\AToneeps, 
\ATtwoeps\colon L^1(\Omega;\mathbb{S}^1)\times L^1(\Omega)\to[0,+\infty]$$ 
given by
\begin{equation}\label{AT1}
	\AToneeps(u,v):=\begin{cases}
		\ATeps(u,v)& \text{ if } (u,v)\in\domATgrande,
\\[1em]
		+\infty&\text{ otherwise 
in } L^1(\Om; \circle) \times L^1(\Om),
	\end{cases}
\end{equation}
and
\begin{equation}\label{AT2}
	\ATtwoeps(u,v):=\begin{cases}
		\ATeps(u,v)& \text{ if } (u,v)\in\domATpiccolo,
\\[1em]
		+\infty&\text{ otherwise in }
L^1(\Om; \circle) \times L^1(\Om),
	\end{cases}
\end{equation}
where
\begin{equation}\label{AT}
	\ATeps(u,v):=
	\int_{\Omega}\left(v^2|\nabla u|^2+\varepsilon|\nabla v|^2+\frac{(v-1)^2}{4\varepsilon}\right){\rm d}x,
\end{equation}
with $\vert \grad u\vert^2$ indicating the Frobenius norm of $\grad u$.
It is clear that
$$
\domATpiccolo
 \subset 		\domATgrande, \qquad 
\ATtwoeps \geq 
\AToneeps.
$$

Our main results, connecting these functionals
with Mumford-Shah-type functionals, read as follows.

\begin{theorem}[$\Gamma$-convergence of $\AToneeps$]\label{thm1}
	Let $\Omega\subseteq\R^n$ be a connected bounded open set with Lipschitz boundary.
We have 
 $$
\Gamma-L^1 \lim_{\eps \to 0^+} \AToneeps = \MSSuno,
$$
 where
$\MSSuno\colon L^1(\Omega;\mathbb{S}^1)\times L^1(\Omega)\to[0,+\infty]$ is 
given by
	\begin{equation}\label{MS}
		\MSSuno(u,v):=
		\begin{cases}
			\displaystyle	\int_{\Omega}|\nabla u|^2 {\rm d}x+ \Hausdorff(S_u)
			& {\rm if}~ u\in SBV^2(\Omega;\mathbb{S}^1),\, v=1 \text{ a.e.}\,,
			\\[1em]
			+\infty & {\rm otherwise ~in} ~ L^1(\Om; \Suno) \times L^1(\Om).
		\end{cases}
	\end{equation}
\end{theorem}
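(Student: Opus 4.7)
The plan is to prove the two classical halves of $\Gamma$-convergence — the liminf inequality and the existence of a recovery sequence — by leveraging the vectorial Ambrosio--Tortorelli theorem for $\R^2$-valued maps in its ``relaxed'' form (on the domain where only $v|\grad u|\in L^2$ is required, rather than $u\in W^{1,2}$), and then imposing the $\Suno$-constraint separately at each step. Indeed, $\AToneeps$ is precisely this relaxed vectorial AT functional restricted to pairs with $|u|=1$ a.e.

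For the liminf inequality, assume $(u_\var,v_\var)\to(u,v)$ in $L^1(\Om;\Suno)\times L^1(\Om)$ along $\var\to 0^+$ with $\liminf_{\var\to 0^+}\AToneeps(u_\var,v_\var)<\infty$. The uniform bound on $\tfrac{1}{4\var}\int_\Om(v_\var-1)^2\dx$ forces $v_\var\to 1$ in $L^2(\Om)$, hence $v=1$ a.e. Viewing $u_\var$ momentarily as an $\R^2$-valued map and discarding the sphere constraint, the vectorial AT lower bound (Focardi, or the standard extension of Ambrosio--Tortorelli to the relaxed domain) yields $u\in SBV^2(\Om;\R^2)$ together with
$$\int_\Om|\grad u|^2\dx+\Hausdorff(S_u)\le\liminf_{\var\to 0^+}\AToneeps(u_\var,v_\var).$$
Since $|u_\var|=1$ a.e. and, up to a subsequence, $u_\var\to u$ a.e., we deduce $|u|=1$ a.e., so $u\in SBV^2(\Om;\Suno)$ and the right-hand side equals $\MSSuno(u,1)$.

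For the recovery sequence, given $u\in SBV^2(\Om;\Suno)$ a density argument (Cortesani--Toader in $SBV^2$, coupled with radial projection onto $\Suno$) reduces matters to the case in which $S_u$ is a finite union of smooth closed hypersurfaces and $u$ is smooth on $\Om\setminus S_u$. Around each smooth component of $S_u$ I set up two nested Fermat tubes: an \emph{inner} tube of width $w(\var)$ with $w(\var)/\var\to 0$, in which $v_\var\equiv 0$ and $u_\var$ interpolates between the traces $u^\pm$ via the short geodesic arc on $\Suno$ (measurably chosen, the $\Hausdorff$-negligible antipodal set being handled by an arbitrary measurable selection); and an \emph{outer} layer of width $O(\var)$, in which $u_\var$ is kept constantly equal to $u^\pm$ on each side while $v_\var$ follows the classical one-dimensional optimal AT profile from $0$ at the inner boundary to $1-o(1)$ at the outer boundary. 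Outside this double tube, $v_\var\equiv 1$ and $u_\var$ is a smooth $\Suno$-valued approximation of $u$ matching $u^\pm$ at the outer boundary. The inner tube contributes $w(\var)/(4\var)\cdot\Hausdorff(S_u)\to 0$ to the $(v-1)^2/(4\var)$ term, each side of the outer layer contributes the optimal $\tfrac12\Hausdorff(S_u)$ by the usual one-dimensional AT computation, and the bulk term $v_\var^2|\grad u_\var|^2$ converges to $\int_\Om|\grad u|^2\dx$, whence
$$\limsup_{\var\to 0^+}\AToneeps(u_\var,v_\var)\le\int_\Om|\grad u|^2\dx+\Hausdorff(S_u).$$

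The main obstacle is the recovery construction, specifically ensuring $u_\var\in W^{1,1}(\Om;\Suno)$ while the bulk term $v_\var^2|\grad u_\var|^2$ stays controlled, despite $|\grad u_\var|\sim 1/w(\var)\to\infty$ in the inner tube where $u_\var$ makes an order-one transition on $\Suno$ across a width $w(\var)\to 0$. This blow-up is admissible precisely because $\domATgrande$ requires only $v|\grad u|\in L^2$, and $v_\var\equiv 0$ in the inner tube makes the product vanish identically there. Gluing the geodesic interpolation, the outer layer and the smooth bulk approximation requires a careful use of Fermat coordinates together with an $\Suno$-valued mollification of $u$ away from $S_u$ (obtained by mollifying the $\R^2$-components and projecting onto $\Suno$, which is well-defined where the convolution stays bounded away from the origin), but is standard once the density reduction has been carried out.
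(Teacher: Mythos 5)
Your lower bound matches the paper's: the vectorial Ambrosio--Tortorelli liminf inequality (Focardi) is applied on the relaxed domain $\widehat{\mathcal A}_{\Suno}$, and the $\Suno$-constraint passes to the limit by a.e.\ convergence along a subsequence. The upper bound is where you diverge, and two steps in your construction have genuine gaps.

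First, the density reduction via ``Cortesani--Toader coupled with radial projection onto $\Suno$'' is not controlled. The piecewise-smooth $\R^2$-valued approximants $w_n$ produced by Cortesani--Toader need not stay bounded away from the origin (strong $L^1$ convergence to a function with $|u|=1$ a.e.\ permits $|w_n|$ to be small on a small set), and where $|w_n|$ is near $0$ the projection $w_n/|w_n|$ has uncontrolled gradient, so neither the $W^{1,2}$ bound nor the piecewise regularity survives the projection. The paper avoids this entirely with Proposition~\ref{prop:density}, which rests on the $\Suno$-specific density result of Carriero--Leaci \cite{Carriero-Leaci} (built from $\Suno$-constrained Mumford--Shah minimizers), and in fact reduces only to the weaker Minkowski-content regularity of $S_u$, never to piecewise smoothness. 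A correct analogue of your plan would be to apply the scalar Cortesani--Toader density to a lifting $\varphi\in SBV^2(\Om)$ of $u$ (available by Theorem~\ref{thm:dav-ign} and Remark~\ref{rem:dav-ign}) and set $u_n:=e^{i\varphi_n}$; this stays on $\Suno$ by construction.

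Second, your geodesic interpolation in the inner tube asserts ``the $\Hausdorff$-negligible antipodal set'', but for a generic $u\in SBV^2(\Om;\Suno)$ the set $\{x\in S_u:u^+(x)=-u^-(x)\}$ can have full $\Hausdorff$-measure: take $u=e^{i\pi\chi_E}$ for a finite-perimeter set $E$, where $u^+=-u^-$ on all of $\partial^*E=S_u$. There the short geodesic is ambiguous, and an arbitrary measurable selection can produce an interpolant that jumps on a codimension-one set transversal to $S_u$ inside the tube, destroying $u_\var\in W^{1,1}$. The paper sidesteps this precisely by interpolating the \emph{lifting}, not the map: it mollifies $\varphi$ only inside $(S_u)_{\xi_k}$, $\xi_k/\var_k\to0$, using the boundary-trace-matching approximation of Remark~\ref{rem:smooth_approx} (which is why that remark is included), so the glued function $\omega_k$ is $W^{1,1}$ across $\partial(S_u)_{\xi_k}$, and sets $u_k:=e^{i\omega_k}$. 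The lifting's jump opening $\jump{\varphi}$ encodes the orientation of the rotation unambiguously, making antipodal $u^\pm$ harmless. For $v_k$ the paper then uses Chambolle's exponential profile, which has the same optimal cost as your finite-width $1$D AT profile but avoids the outer gluing. In short: your scheme is salvageable, but to fix both gaps one is essentially forced to replace the geodesic interpolation of $u$ by an interpolation of a lifting $\varphi$, at which point it becomes the paper's argument.
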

\begin{theorem}[$\Gamma$-convergence of $\ATtwoeps$]\label{thm2}
{ Let $\Omega\subseteq\R^n$ be a connected and  simply-connected bounded open set with Lipschitz boundary.} We have 
 $$
\Gamma-L^1 \lim_{\eps \to 0^+} \ATtwoeps = \MSlift,
$$
 where
$\MSlift
\colon L^1(\Omega;\mathbb{S}^1)\times L^1(\Omega)\to[0,+\infty]$ is given by
	\begin{equation}\label{MSlif}
		\MSlift(u,v):=
		\begin{cases}
			\displaystyle	\int_\Omega|\nabla u|^2\dx+
\minvalueu2
 & 
			{\rm if~} u\in SBV^2(\Omega;\mathbb{S}^1),\, v=1 \text{ a.e.} 
			\\[1em]
			+\infty &{\rm otherwise~in~}
L^1(\Om; \circle) \times L^1(\Om),
		\end{cases}
	\end{equation}
	with 
\begin{equation}\label{eq:minprob_intro}
\minvalueu2:=\inf\{
\Hausdorff(S_\varphi)
\colon\varphi\in GSBV^2(\Omega),\, e^{i\varphi}=u\text{ a.e. in }\Omega\}.
\end{equation}
\end{theorem}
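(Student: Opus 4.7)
The plan is to reduce the analysis to the classical scalar Ambrosio-Tortorelli $\Gamma$-convergence via the lifting procedure. The key observation is that on a simply-connected $\Om$, every $u \in W^{1,2}(\Om; \Suno)$ admits a global lifting $\varphi \in W^{1,2}(\Om)$ with $u = e^{i\varphi}$ (Bethuel-Zheng), unique up to an additive constant in $2\pi\mathbb{Z}$, and satisfies $|\grad u|^2 = |\grad \varphi|^2$ a.e. Hence for any $(u, v) \in \domATpiccolo$ one has $\ATtwoeps(u, v) = \ATeps(\varphi, v)$, reducing the problem to the scalar case; the nonlocality of the $\Gamma$-limit will then arise from the non-uniqueness of the $GSBV^2$ liftings of the limit map $u$, which is no longer Sobolev.

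For the liminf inequality, given $(u_\eps, v_\eps) \to (u, v)$ in $L^1$ with $\liminf_{\eps \to 0^+} \ATtwoeps(u_\eps, v_\eps) < +\infty$, I would pass to a subsequence realising the liminf, lift each $u_\eps$ to some $\varphi_\eps \in W^{1,2}(\Om)$, and invoke the compactness result for $\Suno$-valued liftings established earlier in the paper. After subtracting a suitable integer multiple of $2\pi$ from each $\varphi_\eps$ (which does not alter $e^{i\varphi_\eps} = u_\eps$), this yields some $\varphi \in GSBV^2(\Om)$ with $\varphi_\eps \to \varphi$ in $L^1$ and $e^{i\varphi} = u$ a.e., in particular $u \in SBV^2(\Om; \Suno)$. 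The scalar Ambrosio-Tortorelli liminf applied to $(\varphi_\eps, v_\eps)$ then forces $v = 1$ a.e.\ and gives
\[
\liminf_{\eps \to 0^+} \ATtwoeps(u_\eps, v_\eps) \;=\; \liminf_{\eps \to 0^+} \ATeps(\varphi_\eps, v_\eps) \;\ge\; \int_\Om |\grad \varphi|^2 \dx + \Hausdorff(S_\varphi).
\]
Since $|\grad \varphi| = |\grad u|$ a.e.\ and $\Hausdorff(S_\varphi) \ge \minvalueu2$ by the very definition of the infimum, the lower bound $\MSlift(u,1)$ follows.

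For the limsup inequality, fix $u \in SBV^2(\Om; \Suno)$ with $\minvalueu2 < +\infty$ (otherwise there is nothing to prove) and, for $\eta > 0$, pick a near-optimal lifting $\varphi \in GSBV^2(\Om)$ with $e^{i\varphi} = u$ and $\Hausdorff(S_\varphi) \le \minvalueu2 + \eta$. By the limsup part of the classical scalar AT $\Gamma$-convergence there exists a recovery sequence $(\varphi_\eps, v_\eps)$ with $\varphi_\eps, v_\eps \in W^{1,2}(\Om)$, $0 \le v_\eps \le 1$, converging to $(\varphi, 1)$ in $L^1$ and satisfying
\[
\limsup_{\eps \to 0^+} \ATeps(\varphi_\eps, v_\eps) \;\le\; \int_\Om |\grad \varphi|^2 \dx + \Hausdorff(S_\varphi).
\]
Setting $u_\eps := e^{i\varphi_\eps} \in W^{1,2}(\Om; \Suno)$, one has $(u_\eps, v_\eps) \in \domATpiccolo$, $u_\eps \to u$ in $L^1$ (via a.e.\ convergence along a subsequence of $\varphi_\eps$ and bounded convergence), and $\ATtwoeps(u_\eps, v_\eps) = \ATeps(\varphi_\eps, v_\eps)$. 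A standard diagonal argument in $\eta \to 0^+$ then yields the matching upper bound $\MSlift(u,1)$.

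The critical obstacle is the compactness of the liftings in the liminf step: although each $u_\eps$ admits a $W^{1,2}$ lifting $\varphi_\eps$, the energy bound on $(u_\eps, v_\eps)$ only controls $(\varphi_\eps, v_\eps)$ modulo the freedom of integer shifts, and one must show that after a judicious choice of $k_\eps \in \mathbb{Z}$ the normalized sequence $\varphi_\eps - 2\pi k_\eps$ is precompact in $L^1(\Om)$ with limit in $GSBV^2(\Om)$. This is precisely the ad hoc lifting compactness alluded to in the abstract, and it is the mechanism responsible for the emergence of the nonlocal functional $\minvalueu2$ in the $\Gamma$-limit rather than a standard local jump-set penalty: the liftings $\varphi_\eps$ can jump across sets strictly larger than $S_{u_\eps}$, and the optimal such jump set is captured exactly by the infimum in \eqref{eq:minprob_intro}.
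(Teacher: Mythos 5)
Your overall strategy — reduce to the scalar Ambrosio--Tortorelli $\Gamma$-convergence via liftings, and trace the nonlocality of the limit to the non-uniqueness of $GSBV^2$ liftings of a non-Sobolev map — is exactly the right idea, and your upper bound is essentially correct modulo two technical points the paper has to supply (a density step à la Proposition~\ref{prop:density2} so that the classical recovery-sequence construction applies to the lifting, and a separate truncation/diagonal argument in the case $\varphi\notin L^\infty(\Om)$, which really can occur by the example in Section~\ref{sec:an_example}).

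The genuine gap is in the lower bound, specifically in the sentence claiming that after subtracting a single sequence $k_\eps\in\mathbb{Z}$ the liftings $\varphi_\eps - 2\pi k_\eps$ become precompact in $L^1(\Om)$ with limit $\varphi\in GSBV^2(\Om)$. This is not what the paper's compactness theorems give, and it is in general false. Two issues. First, the energy bound controls $\int_\Om v_\eps^2|\nabla\varphi_\eps|^2\dx$, not $\int_\Om|\nabla\varphi_\eps|^2\dx$, so $\varphi_\eps$ is not bounded in $W^{1,2}(\Om)$; to obtain a sequence of liftings with equibounded gradient and jump the paper first truncates each $\varphi_k$ on a well-chosen superlevel set $\{v_k\le t(k)\}$, producing $\phi_k=\varphi_k\chi_{\Om\setminus F_k^{t(k)}}\in SBV^2(\Om)$ with $\int_\Om|\nabla\phi_k|^2\dx+\Hausdorff(S_{\phi_k})\le C$ and $\bar u_k:=e^{i\phi_k}\stackrel{*}{\rightharpoonup} u$ in $BV$ — only then are the hypotheses of Theorem~\ref{teo:compactness} met. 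Second, and more fundamentally, the conclusion of Theorem~\ref{teo:compactness} is \emph{not} $L^1$ convergence after a single integer shift: it is convergence on each piece $E_i$ of a Caccioppoli partition of $\Om$, with a \emph{different} sequence of integers $d_k^{(i)}$ on each piece, and divergence $|\varphi_k-2\pi d_k^{(i)}|\to+\infty$ on $\Om\setminus E_i$. Consequently one cannot apply the scalar AT liminf to $(\varphi_\eps,v_\eps)$ globally as you propose; one must localize. The paper does this by truncating $\varphi_k-2\pi d_k^{(i)}$ at level $K$, applying the decoupling property on balls centred at $S_{\varphi_\infty}$ and on $\partial^*E_i$, and recombining the local lower bounds via a Vitali-covering argument (as in Lemma~\ref{lem:ricoprimenti}) before letting $N\to\infty$ and $K\to\infty$. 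Without this localization, your final inequality $\liminf\ATeps(\varphi_\eps,v_\eps)\ge\int_\Om|\nabla\varphi|^2\dx+\Hausdorff(S_\varphi)$ is not justified, because the hypothesis $\varphi_\eps\to\varphi$ in $L^1(\Om)$ of the scalar $\Gamma$-liminf inequality fails.
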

In \eqref{MS} and
\eqref{eq:minprob_intro} the symbol
 $\mathcal{H}^{n-1}(S_\varphi)$
stands for the $(\codone)$-dimensional Hausdorff measure of the jump
set $S_\varphi$ of $\varphi$, a lifting\footnote{See Section 
\ref{subsec:liftings_of_S1_maps}.} of $u$,
and $SBV^2(\Om; \Suno)$ (resp. $GSBV^2(\Om)$)
is the space of $\Suno$-valued maps with 
special bounded variation in $\Om$ (resp.
the space of generalized
special bounded variation functions in $\Om$)
whose absolutely
continuous part of the gradient is square integrable and whose jump set has finite $(n-1)$-dimensional Hausdorff measure.
To better understand the above results, some
comments are in order.

\begin{itemize}
\item[(i)] In Theorem \ref{thm1} 
the $\Gamma$-limit in \eqref{MS} is the classical Mumford-Shah functional
for $\mathbb{S}^1$-valued maps, and indeed part of 
the proof is an adaptation
of known results, together with some applications of the properties
of liftings of nonsmooth maps;
\item[(ii)]
the second, more interesting, 
$\Gamma$-limit in Theorem \ref{thm2} is nonlocal, it
does not have an integral representation and depends on the minimization problem in \eqref{min-prob}.
The singular term $m_2[u]$ is the penalization of 
a particular lifting of $u$, the one which minimizes the measure
of the jump. 
It is clear that
$\MSlift \geq \MSSuno$.
\end{itemize}

These two theorems show the huge difference made by the choice of the 
two domains \eqref{eq:two_domains}
where defining the approximating functionals,
in particular the requirements $u \in W^{1,1}(\Om; \Suno)$,
$v\vert \grad u\vert \in L^2(\Om)$, as opposite to the more standard
requirement $u \in W^{1,2}(\Om; \Suno)$.
The reason of the corresponding different limit
behaviours is topological in nature, and
it is better explained by the following example, 
see also \cite[pag. 30]{DLSVG}.  \\

\noindent
{\bf The example of the vortex map.} Let $n=2$, $\Om = B_1(0) \subset \R^2$,  and consider the vortex map
$u_V(x) = \frac{x}{\vert x\vert}$ for any $x \in B_1(0)\setminus \{0\}$. 
Then $u_V \in 
W^{1,p}(B_1(0); \Suno)$ for any $p \in [1,2)$ and $u 
\notin W^{1,2}(B_1(0); \Suno)$. 
Thus in particular
$$
\MSSuno(u_V, 1) = +\infty \qquad 
{ \text{since } u_V\notin SBV^2(\Omega;\mathbb{S}^1).}
$$
Also, any lifting of $u_V$ jumps (in $B_1(0)$) at least on some curve connecting the origin to $\partial B_1(0)$. 
{Now, let us modify $u_V$ in a small neighbourhood of the origin to get a function which is in $SBV^2(\Omega;\mathbb S^1)$, by introducing a small jump. To this purpose} consider the lifting of $u_V$ given by 
the argument function\footnote{I.e., the imaginary part
of the complex logarithm.} $\theta$
jumping on the 
positive 
real axis. {Hence, in $\Omega$, $S_\theta=(0,1)\times\{0\}$ with jump opening $\jump{\theta}=2\pi$.} 
Now, for $0 < r <R \leq 1$ define the 
annulus $A_{r, R} = B_R(0) \setminus \overline B_r(0)$,
and let $\sigma \in (0,1)$. Consider the 
following perturbation of $\theta$ and $u_V$
on $B_1(0) \setminus \{0\}$: let $\chi_\sigma 
\in C^\infty(B_\sigma(0), 
[0,1])$ be such that $\chi_\sigma \equiv 0$ in $B_{\sigma/4}(0)$, 
$\chi_\sigma \equiv 1$ in $A_{3/4 \sigma, \sigma}$,   $0<\chi_\sigma<1$ in $A_{\sigma/4,3\sigma/4}$,
$\vert \grad \chi_\sigma\vert 
\leq \frac{C}{\sigma}$ for some $C>0$, 
$$
\theta_\sigma(x) :=
\begin{cases}
\chi_\sigma(x) \theta(x) & {\rm if}~ x\in B_\sigma(0) \setminus \{0\},
\\
\theta(x) & {\rm if}~ x \in A_{\sigma,1}(0),
\end{cases}
\qquad \qquad
u^{(\sigma)} := e^{i\theta_\sigma}. 
$$
 \begin{figure}\centering
 	\def\svgwidth{0.40\textwidth}
\begingroup%
  \makeatletter%
  \providecommand\color[2][]{%
    \errmessage{(Inkscape) Color is used for the text in Inkscape, but the package 'color.sty' is not loaded}%
    \renewcommand\color[2][]{}%
  }%
  \providecommand\transparent[1]{%
    \errmessage{(Inkscape) Transparency is used (non-zero) for the text in Inkscape, but the package 'transparent.sty' is not loaded}%
    \renewcommand\transparent[1]{}%
  }%
  \providecommand\rotatebox[2]{#2}%
  \newcommand*\fsize{\dimexpr\f@size pt\relax}%
  \newcommand*\lineheight[1]{\fontsize{\fsize}{#1\fsize}\selectfont}%
  \ifx\svgwidth\undefined%
    \setlength{\unitlength}{511.466992bp}%
    \ifx\svgscale\undefined%
      \relax%
    \else%
      \setlength{\unitlength}{\unitlength * \real{\svgscale}}%
    \fi%
  \else%
    \setlength{\unitlength}{\svgwidth}%
  \fi%
  \global\let\svgwidth\undefined%
  \global\let\svgscale\undefined%
  \makeatother%
  \begin{picture}(1,0.99999992)%
    \lineheight{1}%
    \setlength\tabcolsep{0pt}%
    \put(0,0){\includegraphics[width=\unitlength,page=1]{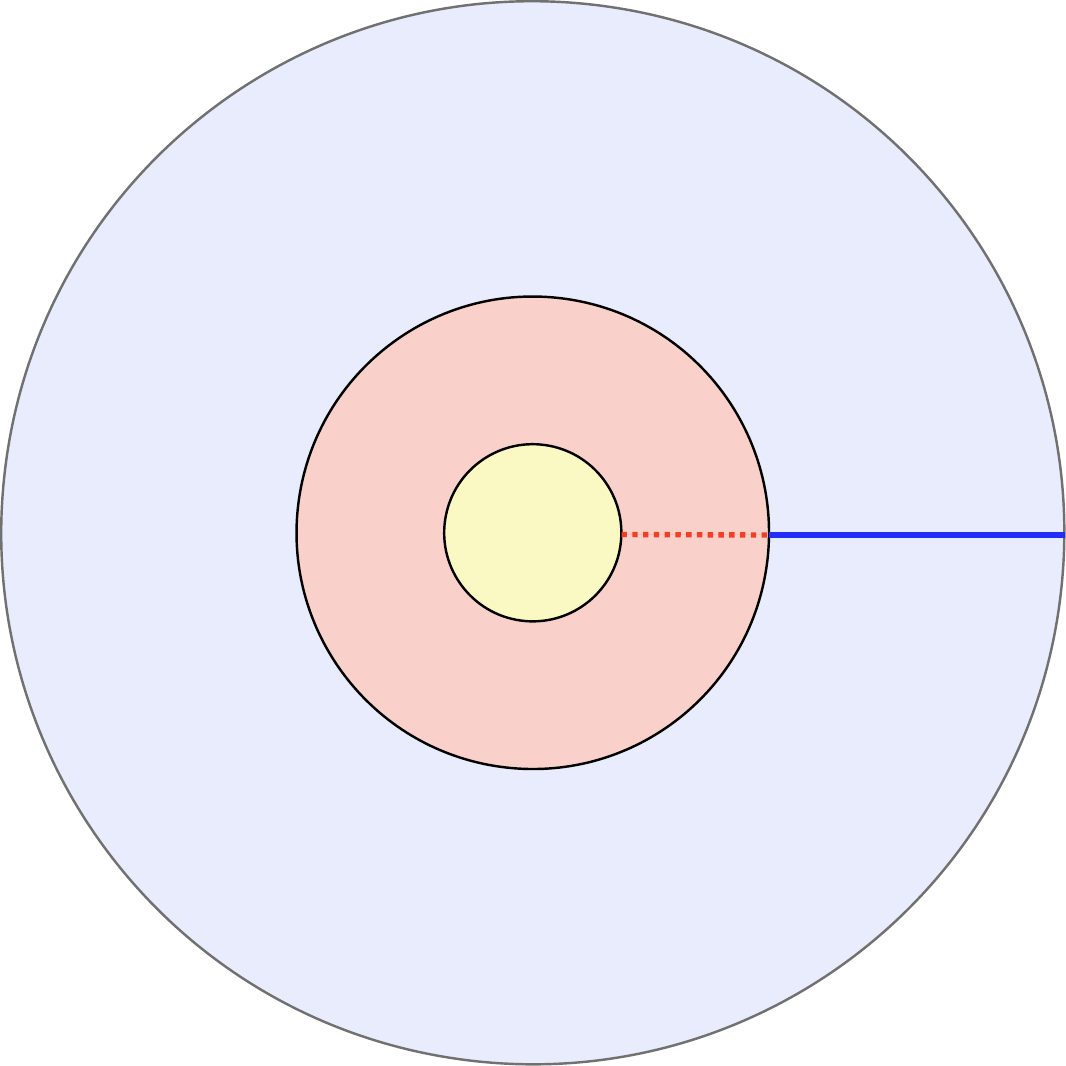}}%
     \put(0.72005078,0.77597175){\color[rgb]{0,0,0}\makebox(0,0)[lt]{\lineheight{1.25}\smash{\begin{tabular}[t]{l}$A_{\frac{3\sigma}4,1}$\end{tabular}}}}%
       \put(0.54005078,0.58597175){\color[rgb]{0,0,0}\makebox(0,0)[lt]{\lineheight{1.25}\smash{\begin{tabular}[t]{l}$A_{\frac\sigma4,\frac{3\sigma}4}$\end{tabular}}}}%
         \put(0.41005078,0.48597175){\color[rgb]{0,0,0}\makebox(0,0)[lt]{\lineheight{1.25}\smash{\begin{tabular}[t]{l}$B_{\frac\sigma4}(0)$\end{tabular}}}}%
    \put(0.79005078,0.42597175){\color[rgb]{0,0,0}\makebox(0,0)[lt]{\lineheight{1.25}\smash{\begin{tabular}[t]{l}$S^I_{\theta_\sigma}$\end{tabular}}}}%
    \put(0.58205078,0.42597175){\color[rgb]{0,0,0}\makebox(0,0)[lt]{\lineheight{1.25}\smash{\begin{tabular}[t]{l}$S^f_{\theta_\sigma}$\end{tabular}}}}%
  \end{picture}%
\endgroup%

 	\caption{The dotted segment denotes the set $S^f_{\theta_\sigma}$ where $\jump{\theta_\sigma}$ varies between 0 and $2\pi$, the continuous segment  denotes the set $S^I_{\theta_\sigma}$ where $\jump{\theta_\sigma}=2\pi$. }\label{fig:example}
 \end{figure}
{Thus we have (see Figure \ref{fig:example})
\begin{equation*}
S_{\theta_\sigma}=S^f_{\theta_\sigma}\cup S^I_{\theta_\sigma}\quad \text{where}\quad S^f_{\theta_\sigma}=\Big(\frac\sigma4,\frac{3\sigma}4\Big)\times\{0\}\,,\quad 
S^I_{\theta_\sigma}=\Big(\frac{3\sigma}4,1\Big)\times\{0\}\,,
\end{equation*}
moreover $\jump{\theta_\sigma}$ varies from $0$ to $2\pi$ on $S^f_{\theta_\sigma}$ and  $\jump{\theta_\sigma}\equiv2\pi$ on  $S^I_{\theta_\sigma}$. This in turn implies $u^{(\sigma)}=u_V$ on $A_{3\sigma/4,1}$ and $S_{u^{(\sigma)}}=S^f_{\theta_\sigma}$.
}
We also have $u^{(\sigma)}
\in SBV^2(B_1(0); \Suno)$, and $\MSSuno(u^{(\sigma)},1) < +\infty$,
and 
$m_2[u^{(\sigma)}]$ turns out to be
the length of the shortest segment joining the right extremum of $S_{u^{(\sigma)}}$ 
to the boundary of $\Omega$. {Indeed any lifting $\varphi$ of ${u^{(\sigma)}}$ has the following properties: $S_\varphi\supset S_{\theta_\sigma}^f$, $\varphi$ is a lifting of $u_V$ on  $A_{3\sigma/4,1}$ and  $\varphi$ is a lifting of $(1,0)$ on $B_\sigma(0)$. Hence, in order to be minimal it must be $S_\varphi= S_{\theta_\sigma}$ and therefore $m_2[u^{(\sigma)}]=\mathcal{H}^{1}(S_{\theta_\sigma})=1-\sigma/4$.}

The map $u^{(\sigma)}$ is one of the possible maps
on which one may compute the two $\Gamma$-limits above.
It turns out that the 
gradient integrability of the approximating
maps is crucial to devise the two behaviours.
In particular 
in Theorem \ref{thm2}, due to topological obstructions related to
{the fact that $u^{(\sigma)}$ has non-zero degree {on any circle $\partial B_\rho(0)$ with $ 3\sigma/4\leq \rho< 1$} (as it coincides with the vortex map outside a small neighbourhood of the origin)}, the recovery sequence gives rise to 
the contribution 
$m_2[u^{(\sigma)}]$. {Indeed, a naive approach to get the upper bound would be to adapt the classical construction given in \cite{AT90,AT92}. More precisely, the latter consists in regularising $u^{(\sigma)}$ in a neighbourhood  with width $\ll\var$ of its jump set $S_{u^{(\sigma)}}$, 
being careful to keep values on the unit circle $\mathbb{S}^1$, and then define $v_\var$ accordingly.
 However this leads to an approximating function $u_\var\colon B_1(0)\to\mathbb{S}^1$ which coincides with the vortex map far from the origin, and so necessarily  $u_\var$ has non-zero degree, which in turn implies\footnote{Indeed, if $u_\var\in W^{1,2}(B_1(0),\mathbb{S}^1)$, then by \cite[Theorem 1.1]{Brezis-Mironescu} $u_\var$ has a lifting in $W^{1,2}(B_1(0))$, which is not possible since $u_\var$ has non zero degree (cf. \cite[Theorem 1.2]{Brezis-Mironescu}).}  that $u_\var \notin W^{1,2}(B_1(0),\mathbb{S}^1)$.  In particular, $u_\var$ cannot be used to produce a recovery sequence in Theorem \ref{thm2}, but  it
 provides a suitable construction for the upper bound in Theorem \ref{thm1},  being of class $W^{1,1}(B_1(0);\mathbb{S}^1)$ and satisfying $v_\var|\nabla u_\var|\in L^2(B_1(0))$.
In order to construct a recovery sequence in the second case it is needed to consider  approximating functions with zero degree. This is possible by modifying $u^{(\sigma)}$ in a neighbourhood of the jump set of a minimal lifting (for example $\theta_\sigma$) and then define $v_\var$ accordingly.  Thus, passing to the limit one gets the contribution  $\mathcal{H}^1(S_{\theta_\sigma})=m_2[u^{(\sigma)}]$.
}\\

\noindent
{\bf Main challenges of the proofs.} {From the previous example we learned that the nature of the $\Gamma$-limit \eqref{MSlif} is related to topological obstructions when approximating a $\mathbb{S}^1$-valued map whose approximate gradient is square integrable. In particular, the non locality of ${\rm MS}_{\rm lift}$ leads to non trivial difficulties in the proof of the lower bound inequality. The  basic idea consists in rewriting the approximating functionals in terms of liftings. Namely, given $u\in W^{1,2}(\Omega;\mathbb{S}^1)$ take a lifting $\varphi\in W^{1,2}(\Omega)$ of $u$, so that from the identity $|\nabla u|=|\nabla\varphi|$ it follows
	\begin{equation}\label{AT-lift}
\ATeps(u,v)=\int_\Omega\left(v^2|\nabla\varphi|^2+ \var|\nabla v|^2+\frac{(v-1)^2}{4\var}
\right)\dx\,,
	\end{equation}
which is the Ambrosio-Tortorelli functional for the pair $(\varphi,v)$.
	Now,  if for any sequence $(u_{\var_k},v_{\var_k})\to(u,1)$ in $L^1(\Omega;\mathbb{S}^1)\times L^1(\Omega)$ we would be able to prove that $\varphi_{\var_k}\to\varphi$ in $L^1(\Omega)$, with $\varphi_{\var_k}\in W^{1,2}(\Omega)$ lifting of $u_{\var_k}$, we could conclude by applying  the classical results
\cite{AT90,AT92}, since the limit
$\varphi$ is a  lifting of $u$.  Unfortunately this is in general not true, as the energy provides a control  of $\nabla\varphi_{\var_k}$ only in regions where $v_{\var_k}$ is far from zero. Moreover a sequence of liftings might escape to infinity (since we can always add integer multiples of $2\pi$).
For this reason,  the main issue concerning the lower bound  in Theorem \ref{thm2} is to get a compactness result on sequences of liftings.
 This leads us to another main result contained in Theorems \ref{thm:compactness&lsc} and  \ref{teo:compactness}, which provides compactness and lower semi-continuity for a sequence of liftings associated to a single map $u$ and more generally, to a converging sequence $(u_{\var_k})$.  
 We stress that these compactness results have a local feature, in the sense that liftings converge up to locally subtract a suitable constant. Similar compactness results obtained by slight modifying the given sequence of functions can be found in \cite{manuel} (see also \cite{CC} where the compactness result in $GSBD^p$ is used in the proof of our result). 
 Theorems \ref{thm:compactness&lsc}  and  \ref{teo:compactness}, besides being interesting by themselves, allow us, together with a refined local argument, to show the lower bound inequality in Theorem \ref{thm2}. Furthermore  Theorem \ref{thm:compactness&lsc} 
  provides existence of a solution to \eqref{eq:minprob_intro}, that is, the infimum in \eqref{eq:minprob_intro} is actually a minimum.
  More precisely, in Corollary \ref{cor:existence-minimizer}, we show that the following more general minimum problem has a solution. Let $p>1$ and $u\in SBV^p(\Omega;\mathbb{S}^1)$; then
  there exists a lifting $\varphi_{\text{{\rm min}}}\in GSBV^p(\Omega)$ of $u$ such that 
  \begin{align}\label{min-prob_intro}
  	\mathcal H^{n-1}(S_{\varphi_{\text{{\rm min}}}})=
  	\inf
  	\left\{\mathcal H^{n-1}(S_\varphi):\varphi\in GSBV^p(\Om),\;e^{i\varphi}=u\text{ a.e. in }\Om
  	\right\}\,.
  \end{align}
}
It is worth to notice that,
somehow surprisingly, the minimum in \eqref{eq:minprob_intro}
is not attained in the class $SBV(\Om)$, as a consequence
of an example discussed in Section \ref{sec:an_example}.   This 
shows that the analysis of \eqref{eq:minprob_intro}
is rather delicate.

The problem of finding a lifting minimizing the length of the jump set somehow resembles the related question of finding a lifting minimizing its $BV$-seminorm \cite{Brezis-Mironescu}. As for the latter, which is strongly linked with Plateau and optimal transport problems \cite{Brezis-Mironescu2}, the structure of a jump minimizing lifting might be related to  optimal transport questions with different cost functions. We discuss in more details this issue in Subsection \ref{sec:optimal}.
\\

\noindent
{{\bf Further directions and open problems.} 
	Lifting theory is useful in several, apparently unrelated, contexts where  \textit{topological singularities} arise, such as screw dislocations in crystals, vortices in superconductors, the non-parametric Plateau problem in codimension-two and optimal transport problems. 
	This shows similarities between different problems to which a suitable version of the Ambrosio-Tortorelli approximations for $\mathbb{S}^1$-valued maps might be addressed.
 In the following we briefly review some open problems which will be investigated in future works.\smallskip
	
	\noindent
\textit{Dislocations and vortices}. Dislocations are line-defects in metals that locally alter the crystalline structure and are the main source of plastic slips. From a mathematical point of view they can be identified with codimension-two singularities. 
	In particular, in a simplified framework one can consider two-dimensional semi-discrete models for dislocations either of screw or edge type, which correspond to point singularities in a two-dimensional domain (see \cite{ADGP-Metastability,DGP,GLP,Ponsiglione,DLSVG, CDLS} and references therein). It is well known that  models for screw dislocations share similarities with Ginzburg Landau models for superconductors \cite{ACP}. In particular, in recent works \cite{DLSVG, CDLS} it was proposed a free-discontinuity model for screw dislocations.  The latter is given by an energy of Mumford-Shah type for $\mathbb{S}^1$-valued maps which penalizes the measure of the jump set. From the analysis pursued in \cite{DLSVG, CDLS} it turns out that such a model is equivalent (to the leading order) to the Ginzburg Landau one. This seems to suggest that an Ambrosio Tortorelli approximation for screw dislocations, as well as for vortices in superconductors, is possible. In addition, this offers the possibility to extend the results of \cite{DLSVG, CDLS}  to the three-dimensional setting (see \cite{CGO15,CGMarz,GMS} for models in dimension three). 
\smallskip

\noindent 
\textit{The Plateau and optimal transport problems}.  The Cartesian Plateau problem consists in finding an area-minimizing surface among all Cartesian surfaces spanning an appropriate Jordan space curve. While the codimension-one setting has been exhaustively understood \cite{Giusti} very few is known in codimension two. This has to do with the fact that the relaxed Cartesian area in codimension larger than one is not subadditive, and thus has no integral representation. 
A couple of  examples in codimension two where the computation of the relaxed graph area is possible are the vortex map $u_V$ and the triple junction function \cite{Acerbi-DalMaso}. Both are $\mathbb S^1$-valued maps with singularities, and the corresponding  relaxed area has a nonlocal feature involving  liftings,  similarly as in  \eqref{MSlif} (cf. \cite{BP,Scala,BEPS,BES} and references therein). Also, the singular contribution appearing in the case of $\mathbb S^1$-valued maps is related with optimal transport problems with different cost functions. In the specific case of $\mathbb S^1$-valued Sobolev maps, the singularities should be optimally connected each other, in a similar fashion as for the minimal connection of dipoles in the minimization of the $BV$-seminorm of liftings \cite{Brezis-Mironescu2} (see also \cite{SS,BSS}).  Connecting the singularities in the relaxed Cartesian area  is related with  finding an optimal Cartesian vertical current filling the holes of the graph of the given map $u$, and this often requires  the study of a Plateau problem in codimension-one with partial free boundary \cite{BMS}. 

In the  simplified setting of $\mathbb S^1$-constrained relaxation \cite[pg.~611~eq.~4]{Giaq}, the optimal value of the graph area has a singular contribution  characterized in terms of $\mathbb S^1$ vertical Cartesian currents \cite[pg.~612, Theorem 1]{Giaq} and that is again related to an optimal transport question on how to connect the singularities of $u$. A future development of the present research is  to approximate the relaxed area functional on $\mathbb S^1$-valued functions by an Ambrosio-Tortorelli energy where the bulk term has linear growth in the gradient (in the same spirit of \cite{ABS}).

We emphasize that phase-field models based on the Ambrosio-Tortorelli functionals have been already employed to study problems related with optimal transport questions (see e.g. \cite{FDW} and references therin, and \cite{CFM} for the relation with the steiner tree problem). While the link of Corollary \ref{cor:existence-minimizer} with the optimal transport question involves the cost function $\psi\equiv 1$ (see Subsection \ref{sec:optimal}), we believe that the analysis of the Ambrosio-Tortorelli functional for $\mathbb S^1$-valued maps with linear growth (instead of quadratic) would give rise to a minimization question similar to \eqref{min-prob_intro} involving different cost functions. Also this will be object of future development.
\smallskip
}

\noindent
%
{\bf Content of the paper.} {In Section \ref{sec:notation_and_main_definitions} we collect some notation and recall some useful tools which will be employed to prove the main results. In Section \ref{sec:existence_of_a_lifting_of_minimal_jump} we provide an example where the jump minimizing lifting is not in $SBV(\Omega)$ but just in $GSBV(\Omega)$. Furthermore, we describe a connection with optimal transport,
 we state the two compactness results
Theorem 
\ref{thm:compactness&lsc}, Theorem \ref{teo:compactness} and the existence of a minimizer to \eqref{eq:minprob_intro}, Corollary \ref{cor:existence-minimizer}.
In Section \ref{sec:proofs-compactness} we provide the proofs of Theorems \ref{thm:compactness&lsc} and \ref{teo:compactness}.
}
Eventually in Section \ref{sec:Gamma_convergence}, after establishing  some density and approximation results
in $SBV(\Omega)$, both for $\Suno$-valued functions
and for liftings, 
 we prove Theorem  \ref{thm1}; in the proof we do not
make use of 
the results of Section \ref{subsec:some_density}, but we 
utilize the results of Section \ref{sec:existence_of_a_lifting_of_minimal_jump}.
Finally, Section \ref{subsec:proof_2} is devoted to the proof of Theorem \ref{thm2}.

	\section{Notation and preliminaries}\label{sec:notation_and_main_definitions}
		In this section we collect some notation, 
and recall some notions concerning $SBV$ and $GSBV$ functions \cite{AmFuPa}
and lifting theory \cite{Brezis-Mironescu}. In what follows:
	\begin{enumerate}[label=(\alph*)]
		\item[-] $n\ge1$ is a fixed integer and $p>1$ is a fixed real number; 
		\item[-] 
$\partial^*A$ denotes the reduced boundary of finite perimeter set $A\subset\R^n$ ;
		\item[-] $\vert \cdot \vert$ and 
 $\mathcal H^{n-1}$ denote the Lebesgue measure and the $(n-1)$-dimensional Hausdorff measure 
in $\R^n$, respectively;
		\item[-] $\chi_A$ denotes 
the characteristic function of the set $A\subset \Rn$;
		\item[-] for $a,b\in \R^n $ the symbol $a
		\otimes b$ denotes the tensor product between $a$ and $b$;
		\item[-] $\mathbb{S}^1:=\{(x,y)\in\R^2\colon x^2+y^2=1\}$
is the unit circle in $\R^2$.
 	\end{enumerate}
	\medskip

	\subsection{$SBV$ and $GSBV$ functions}\label{subsec:SBV_and_GSBV_functions}
Let $\openset \subset \Rn$ be open and bounded, and $m
\geq 1$ an integer. 
We denote by $BV(\openset;\R^m)$ the space of vector-valued
\textit{functions with bounded variation in} $\openset$, and 
with $|\cdot|_{BV}$ 
and $\|\cdot\|_{BV}$  
the $BV$ seminorm and norm, respectively, i.e.
	$$|u|_{BV}:=|Du|(\openset),\qquad \|u\|_{BV}:=\|u\|_{L^1}+|u|_{BV},$$
	see \cite{AmFuPa}.
We say that $u\in BV(\openset;\R^m)$ 
belongs to the space of \textit{special functions with bounded variation in $\openset$} , 
i.e., $u\in SBV(U;\R^m)$, if its distributional gradient is a finite $\mathbb{R}^{m\times n}$- valued Radon measure without 
Cantor part, that is, 
	\begin{equation*}
		Du=\nabla u\mathcal L^n+\jump{u}\otimes \nu_u\mathcal{H}^{n-1}\res S_u\,,
	\end{equation*}
	where $\nabla u$ is the approximate gradient of $u$, $S_u$ is the approximate jump set of $u$, $\jump{u}=u^+-u^-$ is the jump opening and $\nu_u$ is the unit normal, see \cite[Def. 3.67]{AmFuPa}  
to $S_u$. A measurable function $u: U \to \R^m$ belongs to the space of \textit{generalised special functions with bounded variation in $U$}, that is, $u\in GSBV(U;\R^m)$, if $\phi\circ u\in {SBV_{\rm loc}(U)}$
for any 
$\phi\in C^1(\R^m)$ with $\nabla \phi$ compactly supported.\footnote{Recall that $f\in SBV_{\rm loc}(U)$ if $f\in SBV(K)$ for every $K\subset U$ compact.}
If $m=1$ we write $BV(U)=BV(U;\R)$, $SBV(U)=SBV(U;\R)$ and $GSBV(U)=GSBV(U;\R)$.
	\begin{remark}[\textbf{Equivalent definition of $GSBV$ for $m=1$}]
	\rm $GSBV(U)$ can be equivalently defined as the space of measurable functions $u\colon U\to\R$
 such that  $u\wedge M\vee (-M)\in{ SBV_{\rm loc}}(U)$ for any $M>0$.
	\end{remark}
 For $p>1$ we set
	$$
	SBV^p(U;\R^m) = \{u \in SBV(U;\R^m) : \nabla u \in L^p(U;\R^{m\times n})\text{ and }\mathcal H^{n-1}(S_u) < +\infty\},
	$$
	and 
		$$
	GSBV^p(U;\R^m) = \{u \in GSBV(U;\R^m) : \nabla u \in L^p(U;\R^{m\times n})\text{ and }\mathcal H^{n-1}(S_u) < +\infty\}.
	$$
	Again, when $m=1$ we write $SBV^p(U)=SBV^p(U;\R^m)$ and $	GSBV^p(U) =	GSBV^p(U;\R^m) $.
	{
We set
			\begin{equation*}
			BV(U;\mathbb{S}^1)=\{u\in BV(U;\R^2)\colon |u|=1\,\text{ a.e. in }\,U\}\,,
		\end{equation*}
		\begin{equation*}
			SBV(U;\mathbb{S}^1)=\{u\in SBV(U;\R^2)\colon |u|=1\,\text{ a.e. in }\,\Omega\}\,,
		\end{equation*}
		and for $p>1$
		\begin{equation*}
			SBV^p(U;\mathbb{S}^1)=\{u\in SBV(U;\mathbb{S}^1)\colon \nabla u\in L^p(U;\R^{2\times 2}),\, \mathcal{H}^{n-1}(S_u)<+\infty\}\,.
		\end{equation*}
	}
Eventually,
a (finite or countable) family $(E_i)$ of finite perimeter subsets of a finite perimeter set $F$ is called a  Caccioppoli partition of $F$ 
if the sets $E_i$ are pairwise disjoint, and their union is $F$.
The next technical observation will be needed later (Section \ref{subsec:proof_1}).
\begin{remark}[\textbf{Approximation of a $BV$ function by smooth functions}]\label{rem:smooth_approx}
	{\rm Let $A\subset\R^n$ be a
bounded open set with Lipschitz boundary and
$\varphi\in BV(A)$.
By \cite[Theorem 3.9]{AmFuPa}, 
for any $\delta>0$,  there exists a function $\varphi_\delta\in C^\infty(A)$ such that
\begin{equation*}
\int_A|\varphi-\varphi_\delta|\dx<\delta\,, \quad
\int_A|\nabla \varphi_\delta|\dx\le |D\varphi|(A)+\delta\,.
\end{equation*}
Moreover, by inspecting the proof, 
$$
\varphi= 
{\varphi_\delta} \quad \mathcal H^\onecod-\text{a.e. on }\partial A\,,$$
in the sense of $BV$-traces on $\partial A$ (see e.g. \cite[page 181]{AmFuPa}).  To see this last property we recall that $$\varphi_\delta:=\sum_{h\ge 1}(\varphi\psi_h)*\rho_h\,,$$  where
\begin{itemize}
 \item $(\psi_h)_{h\ge1}$ is a partition of unity relative to the covering $(A_h)_{h\ge1}$ of $A$ defined as
 \begin{equation*}
A_1:=\{x\in A\colon \dist(x,\partial A)>2^{-1}\}\,,
 \end{equation*}
 \begin{equation*}
	A_h:=\{x\in A\colon (h+1)^{-1}<\dist(x,\partial A)<(h-1)^{-1}\}\quad\text{ for } h\ge 2\,;
\end{equation*}
\item $(\rho_h)_{h\ge1}$ is a family of mollifiers 
such that 
\begin{equation*}
\supp((\varphi\psi_h)*\rho_h)\subset A_h\,,
\end{equation*}
\begin{equation*}
	\int_A\Big[|(\varphi\psi_h)*\rho_h-\varphi\psi_h|
+|(\varphi \nabla \psi_h)*\rho_h- \varphi
\nabla\psi_h|
	\Big]\dx<2^{-h}\delta\,.
\end{equation*}
\end{itemize}
Thus, for $\mathcal H^\onecod$-a.e. $x_0\in\partial A$, we have
\begin{equation}\label{ex:nonse}
	\begin{split}
|\varphi(x_0)-\varphi_\delta(x_0)|
&\le \lim_{r\searrow0} \frac2{\omega_n} \frac1{r^\dimension}\int_{A\cap B_r(x_0)}\left| \varphi(y)-\varphi_\delta(y)
\right|\dy\\
&\le  \lim_{r\searrow0} \frac2{\omega_n}\frac1{r^\dimension}\int_{A\cap B_r(x_0)}
 \sum_{h\ge 1}\Big|
\varphi(y)\psi_h(y)-( \varphi\psi_h)*\rho_h(y)
\Big|\dy\,.
	\end{split}
\end{equation}
Now we observe that $A_h\cap B_r(x_0)\ne\emptyset$ if and only if $\frac1{h+1}<r$, i.e., $h>\frac1r-1$. Hence for $y\in B_r(x_0)$,
the sum on the right hand side of \eqref{ex:nonse} reduces to 
\begin{equation*}
\sum_{h\ge \frac1r-1}\Big|
\varphi(y)\psi_h(y)-( \varphi\psi_h)*\rho_h(y)
\Big|\le  \sum_{h\ge \frac1r-1}2^{-h}\delta = 2^{-\frac1r+2}\delta\,.
\end{equation*}
 We conclude 
\begin{equation*}
|\varphi(x_0)-\varphi_\delta(x_0)|\le \lim_{r\searrow0} \frac2{\omega_n}\frac1{r^\dimension}
 2^{-\frac1r+2}\delta r^\dimension=0\,.
\end{equation*}}
\end{remark}
\subsection{$\Gamma$-approximation and compactness for the Mumford-Shah functional}
We recall two classical results
\cite{AT90,AT92} (see also \cite{Focardi} for the generalization to the 
vector  case), and \cite[Theorem 4.8]{AmFuPa}. 
\begin{theorem}[\textbf{Ambrosio-Tortorelli}]\label{thm:Ambrosio-Tortorelli} 
Let $\opensetAT
\subset\R^n$ be an open set 
with Lipschitz boundary, and 
$$
\domAT
:=\{(u,v)\in W^{1,2}(\opensetAT;\R^m)
\times W^{1,2}(\opensetAT)\colon 0\le v\le1\}.
$$
	Then the functionals 
	\begin{equation*}
	\ATeps(u,v):=\begin{cases}	\displaystyle
		\int_{\opensetAT}\left(v^2|\nabla u|^2+\varepsilon|\nabla v|^2+\frac{(v-1)^2}{4\varepsilon}\right){\rm d}x&\text{ if }(u,v)
\in\domAT\,,\\[1em]
		+\infty& \text{otherwise in }L^1(\opensetAT;\R^m)
\times L^1(\opensetAT)\,,
	\end{cases}
	\end{equation*}
$\Gamma-L^1$-converge
to the Mumford-Shah functional
	\begin{equation*}
	\MS(u,v):=\begin{cases}
		\displaystyle \int_\opensetAT|\nabla u|^2\dx+\Hausdorff(S_u)
&\text{ if }u\in GSBV^2(\opensetAT;\R^m),\, v=1\text{ a.e.}\\[1em]
		+\infty & \text{otherwise in }L^1(\opensetAT;\R^m)
\times L^1(\opensetAT)\,,
	\end{cases}
	\end{equation*}
as $\var\to0^+$.
\end{theorem}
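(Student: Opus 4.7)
The plan is the standard two-step $\Gamma$-convergence argument, assembling the scalar Ambrosio--Tortorelli theorem \cite{AT90,AT92} with its vector-valued extension \cite{Focardi}. Throughout I fix a sequence $\var_k \to 0^+$ and write $(u_k,v_k) := (u_{\var_k}, v_{\var_k})$.

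For the $\Gamma$-$\liminf$ inequality, assume $(u_k,v_k) \to (u,1)$ in $L^1$ with $\Lambda := \sup_k \ATepsk(u_k,v_k) < +\infty$. The penalization term $\int_\opensetAT (v_k-1)^2/(4\var_k)\dx \le \Lambda$ immediately forces $v_k \to 1$ in $L^2(\opensetAT)$, so $v=1$ a.e. To identify the limit $u$, I would apply the slicing method: for each unit direction $\xi \in \R^n$ and $\Hausdorff$-a.e.\ affine line $\ell$ parallel to $\xi$, the $1$D restrictions $(u_k|_\ell, v_k|_\ell)$ have uniformly bounded one-dimensional Ambrosio--Tortorelli energy; the $1$D scalar result then produces an $SBV^2$ limit on almost every slice, and the slicing characterisation of $GSBV^p$ gives $u \in GSBV^2(\opensetAT; \R^m)$. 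The bulk estimate
$$\int_\opensetAT |\nabla u|^2 \dx \le \liminfk \int_\opensetAT v_k^2 |\nabla u_k|^2 \dx$$
follows from $v_k \to 1$ in $L^2$ together with weak $L^2$-lower semicontinuity of the approximate gradient on $GSBV^2$. For the jump part, Young's inequality yields
$$\var |\nabla v|^2 + \tfrac{(v-1)^2}{4\var} \ge |\nabla v|\,|1-v| = |\nabla \Phi(v)|, \qquad \Phi(t) := t - \tfrac{t^2}{2},$$
and combining this with the $1$D slicing argument and Fatou's lemma produces
$$\Hausdorff(S_u) \le \liminfk \int_\opensetAT \Big(\var_k|\nabla v_k|^2 + \tfrac{(v_k-1)^2}{4\var_k}\Big)\dx,$$
since each jump of the $1$D limit forces $v_k$ to pass near $0$ on the corresponding slice, contributing the Modica--Mortola well constant.

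For the $\Gamma$-$\limsup$ inequality, a standard density argument in $SBV^2$ reduces the construction of a recovery sequence to the case where $u \in SBV^2(\opensetAT;\R^m)$, the jump set $S_u$ is a finite union of smooth pieces of $(n-1)$-surfaces, and $u$ admits a $C^1$-extension across each such piece. For such $u$ I would set $v_\var(x) := \gamma(d(x, S_u)/\var)$, where $\gamma$ is the optimal one-dimensional Modica--Mortola profile starting at $\gamma(0)=0$, and extend $v_\var \equiv 0$ on the thin tube $\{d(\cdot, S_u) \le c\var\}$; and take $u_\var$ to be a mollification of $u$ made constant on that same tube. A Fubini computation in tubular coordinates around $S_u$, combined with the Modica--Mortola optimal-profile identity, yields
$$\int_\opensetAT v_\var^2 |\nabla u_\var|^2 \dx \to \int_\opensetAT |\nabla u|^2 \dx, \qquad \int_\opensetAT \Big(\var|\nabla v_\var|^2 + \tfrac{(v_\var-1)^2}{4\var}\Big) \dx \to \Hausdorff(S_u),$$
and a diagonal argument over the density approximation extends the construction to arbitrary $u \in GSBV^2(\opensetAT;\R^m)$.

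The main obstacle is the lower bound: since $v_k$ may vanish along the limiting jump set, no uniform $L^2$ bound on $\nabla u_k$ is available and Sobolev-type compactness for $u_k$ cannot be invoked, so $u$ and its jump set must be identified simultaneously through slicing. In the vector-valued setting one must additionally either exploit the slicing characterisation of $GSBV^p(\opensetAT;\R^m)$ or argue component by component via truncations $\phi \circ u$ with $\nabla \phi$ compactly supported; both routes are technical but standard, and the latter in fact motivates the definition of $GSBV$ itself.
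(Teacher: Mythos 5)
The paper does not prove this theorem; it is recalled verbatim from the classical references \cite{AT90,AT92} (and \cite{Focardi} for the vector-valued case, see also \cite[Theorem 4.8]{AmFuPa}). Your outline correctly reproduces the standard argument: the $\Gamma$-$\liminf$ by slicing plus the Modica--Mortola one-dimensional inequality, and the $\Gamma$-$\limsup$ by density in $SBV^2$ together with the optimal exponential profile for $v$ and constancy of $u$ on a thin tube around $S_u$.

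The one step that is too telegraphic to be accepted as written is the bulk lower bound. You cannot invoke ``weak $L^2$-lower semicontinuity of the approximate gradient'' directly, because the energy bound only controls $v_k\nabla u_k$ in $L^2$, not $\nabla u_k$ itself; near where $v_k$ vanishes, $\nabla u_k$ may blow up. The standard repair is to fix $\eta\in(0,1)$ and observe
\[
\int_{\opensetAT} v_k^2|\nabla u_k|^2\dx \ \ge\ \eta^2\int_{\{v_k>\eta\}}|\nabla u_k|^2\dx,
\]
then to compare with an auxiliary sequence (for instance, $u_k$ multiplied by a cut-off of $v_k$, or $u_k$ made locally constant on a sublevel set $\{v_k\le t\}$ with $t$ chosen by coarea so that $\Hausdorff(\partial^*\{v_k\le t\})$ is controlled by the Modica--Mortola term). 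The resulting auxiliary sequence lies in $SBV$ with uniform bounds on both $\|\nabla\cdot\|_{L^2}$ and $\Hausdorff(S_\cdot)$, to which Ambrosio's compactness/lower semicontinuity theorem applies; letting $\eta\to 1$ recovers the full bulk estimate, and the jump set created by the cut-off is absorbed into the surface term. This is precisely the mechanism behind the decoupling estimates \eqref{eq:decoupling} that the paper records in Remark~\ref{rem:decoupling}(i), and it is worth spelling it out since the same device reappears (via the level sets $F_k^{\livellok}$) in the proof of Theorem~\ref{thm2}.
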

\begin{remark}\label{rem:decoupling}\rm
	By inspecting the proof of Theorem \ref{thm:Ambrosio-Tortorelli} 
one actually deduces the following  properties:
\begin{enumerate}
\item[(i)] \textbf{Decoupling.} Let $\var_k\searrow0$, and let $((u_k,v_k))_{k\ge1}\subset \domAT$
be a sequence converging to $(u,1)$ 
in $L^1(\opensetAT;\R^m)\times L^1(\opensetAT)$ with 
$\sup_{k \in \NN}\ATepsk(u_k,v_k)<+\infty$. Then 
\begin{equation}\label{eq:decoupling}
	\begin{split}
		&	\liminfk \int_\opensetAT
		v_k^2|\nabla u_k|^2\dx\ge \int_\opensetAT|\nabla u|^2\dx\,,\\[1em]
		&\liminfk\int_\opensetAT \left(\varepsilon_k|\nabla v_k|^2+
		\frac{(v_k-1)^2}{4\varepsilon_k}\right)\dx\ge \Hausdorff(S_u)\,.
	\end{split}
\end{equation}
\item[(ii)] {\textbf{$\Gamma$-convergence on a larger domain.} The result  still holds if one replaces $\mathcal{A}$ with the larger class 
$$
\widehat\domAT
:=\{(u,v)\in W^{1,1}(\opensetAT;\R^m)
\times W^{1,2}(\opensetAT)\colon v|\nabla u|\in L^2(\Omega),\,0\le v\le1\}.
$$
}
\end{enumerate}

\end{remark}

\begin{theorem}[\textbf{Compactness in $SBV$}]\label{teo:ambrosio}
	Let $p>1$, $\Omega\subset\ambientsource$ be a bounded open set and $(\varphi_k)_{k\ge1}
\subset SBV(\Omega)$ be a sequence
satisfying
$$
\sup_{k \in \NN}
\Big(\int_\Omega|\nabla \varphi_k|^p\dx+\Hausdorff(S_{\varphi_k})
\Big)<+\infty,
$$
and
\begin{equation}\label{eq:uniform_bound_in_Linfty}
\sup_{k \in \NN}
\Vert \varphi_k\Vert_\infty < +\infty.
\end{equation}
Then there exists a subsequence of $(\varphi_k)$ 
 weakly-star
converging in $BV(\Omega)$ 
to a 
function belonging to $SBV(\Omega)$.
\end{theorem}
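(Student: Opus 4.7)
The plan is to reduce the statement to a uniform $BV$ bound, apply the classical weak-$*$ compactness in $BV$, and then invoke an $SBV$-closure argument to ensure the limit has no Cantor part.

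First I would derive a uniform bound on $\Vert \varphi_k\Vert_{BV(\Omega)}$. The $L^\infty$ bound \eqref{eq:uniform_bound_in_Linfty} together with boundedness of $\Omega$ gives $\sup_k\Vert \varphi_k\Vert_{L^1(\Omega)}<+\infty$. For the absolutely continuous part of the derivative, since $p>1$ and $|\Omega|<+\infty$, H\"older's inequality yields
$$
\int_\Omega |\nabla\varphi_k|\dx \leq |\Omega|^{1-1/p}\Big(\int_\Omega|\nabla\varphi_k|^p\dx\Big)^{1/p},
$$
which is uniformly bounded by hypothesis. For the singular part, since $\varphi_k\in SBV(\Omega)$ one has $D^s\varphi_k = \jump{\varphi_k}\,\nu_{\varphi_k}\,\mathcal H^{n-1}\res S_{\varphi_k}$ with $|\jump{\varphi_k}|\leq 2\Vert\varphi_k\Vert_\infty$, so
$$
|D^s\varphi_k|(\Omega) \leq 2\Vert\varphi_k\Vert_\infty\, \mathcal H^{n-1}(S_{\varphi_k}),
$$
which again is uniformly bounded. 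Therefore $\sup_k\Vert \varphi_k\Vert_{BV(\Omega)}<+\infty$.

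Next, by the standard weak-$*$ compactness theorem in $BV$ (see e.g. \cite[Theorem 3.23]{AmFuPa}) there exist a subsequence (not relabelled) and $\varphi\in BV(\Omega)$ with $\varphi_k\to\varphi$ strongly in $L^1(\Omega)$ and $D\varphi_k\overset{*}{\rightharpoonup} D\varphi$ as $\R^n$-valued measures on $\Omega$; in particular $\varphi_k\overset{*}{\rightharpoonup}\varphi$ in $BV(\Omega)$. Moreover, the $L^\infty$ bound passes to the limit, so $\varphi\in L^\infty(\Omega)$.

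The delicate step, and the real content of the theorem, is the $SBV$-closure: showing $D^c\varphi=0$. The key ingredient is Ambrosio's lower semicontinuity result for the two pieces of the derivative along sequences in $SBV$ with equi-integrable approximate gradient and uniformly bounded jump set. The hypothesis $p>1$ gives equi-integrability of $(|\nabla\varphi_k|)$ by the De la Vall\'ee Poussin criterion, while $\sup_k\mathcal H^{n-1}(S_{\varphi_k})<+\infty$ takes care of the jump part. One then obtains separately
$$
\int_\Omega|\nabla\varphi|^p\dx \leq \liminf_{k\to+\infty}\int_\Omega|\nabla\varphi_k|^p\dx,\qquad \mathcal H^{n-1}(S_\varphi)\leq \liminf_{k\to+\infty}\mathcal H^{n-1}(S_{\varphi_k}),
$$
together with
$$
\int_\Omega|\nabla\varphi|\dx+\int_{S_\varphi}|\jump{\varphi}|\,d\mathcal H^{n-1}\leq \liminf_{k\to+\infty}|D\varphi_k|(\Omega).
$$
Comparing this with the decomposition $|D\varphi|(\Omega)=\int_\Omega|\nabla\varphi|\dx+\int_{S_\varphi}|\jump{\varphi}|\,d\mathcal H^{n-1}+|D^c\varphi|(\Omega)$ and the lower semicontinuity of total variation forces $|D^c\varphi|(\Omega)=0$, so $\varphi\in SBV(\Omega)$, completing the proof.

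The main obstacle is precisely this last step, i.e.\ the absence of a Cantor part in the limit; it is a nontrivial statement whose standard proof uses a slicing reduction to the one-dimensional case combined with the Federer--Vol'pert structure theorem. Fortunately the conclusion is available in \cite[Theorem 4.7--4.8]{AmFuPa}, so in our setting it suffices to check that the hypotheses of that theorem hold, which is exactly what the uniform bounds above provide.
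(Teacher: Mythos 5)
Your overall approach — deduce a uniform $BV$ bound, extract a weakly-$*$ convergent subsequence, and then invoke Ambrosio's $SBV$ closure/compactness theorem — is exactly what the paper does: the paper states this result without proof and simply cites \cite[Theorem 4.8]{AmFuPa}. Your preliminary bounds (Hölder on the absolutely continuous part, $|D^s\varphi_k|(\Omega)\le 2\Vert\varphi_k\Vert_\infty\,\mathcal H^{n-1}(S_{\varphi_k})$ for the jump part) are correct, and your final appeal to the cited theorem, after verifying its hypotheses, is the right move.

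However, the intermediate paragraph where you claim to \emph{derive} $|D^c\varphi|(\Omega)=0$ by ``comparing'' the three inequalities is not a valid argument and should be removed. The inequality
\[
\int_\Omega|\nabla\varphi|\dx+\int_{S_\varphi}|\jump{\varphi}|\,d\mathcal H^{n-1}\leq \liminf_{k\to+\infty}|D\varphi_k|(\Omega)
\]
is strictly \emph{weaker} than lower semicontinuity of the total variation (the left-hand side equals $|D\varphi|(\Omega)-|D^c\varphi|(\Omega)$), so placing it alongside $|D\varphi|(\Omega)\leq\liminf_k|D\varphi_k|(\Omega)$ produces no contradiction and in no way forces $|D^c\varphi|(\Omega)=0$: a priori, $\liminf_k|D\varphi_k|(\Omega)$ could comfortably exceed $|D\varphi|(\Omega)$ and accommodate an arbitrary Cantor part. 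Absence of the Cantor part in the limit is the genuine content of Ambrosio's theorem, whose proof requires slicing to dimension one and a careful structure argument (as you acknowledge in your last paragraph); it cannot be obtained by a soft bookkeeping of lower semicontinuity inequalities. Since you ultimately invoke \cite[Theorems 4.7--4.8]{AmFuPa} anyway, the clean version of your proof should simply state that the $L^p$ bound gives equi-integrability of $(\nabla\varphi_k)$, that the $\mathcal H^{n-1}$-bound on jump sets and the $L^\infty$ bound are in force, and then conclude directly from the cited theorem.
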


Removing assumption \eqref{eq:uniform_bound_in_Linfty}
leads to the following result, which 
is the $GSBV^p$ variant  of \cite[Theorem 1.1]{CC},
and that will be applied in the proof of Lemma \ref{lem:passo_base} to an 
appropriate 
sequence of liftings.

\begin{theorem}[\textbf{Compactness}]\label{teo:vito}
	Let $p>1$, $\opensetcompactness\subset\ambientsource$ 
be an open set and $(\funzioni_vito_k)_{k\ge1}
\subset { G}SBV^p(\opensetcompactness)$ be a sequence
satisfying
$$
\sup_{k \in \NN}
\Big(
\int_\opensetcompactness
|\nabla \funzioni_vito_k|^p\dx+\Hausdorff(S_{\funzioni_vito_k})
\Big)
<+\infty. 
$$
Then there exist a (not-relabelled) subsequence and  a function $\varphi_\infty\in GSBV^p(\opensetcompactness)$ such that:
	\begin{equation*}
		\begin{split}
&
		E:=\{x\in \opensetcompactness:
|\funzioni_vito_k(x)|\rightarrow +\infty\} \text{
	has finite perimeter},
\\
& \varphi_\infty=0 \text{ on } E, 
\\
&\funzioni_vito_k\rightarrow \varphi_\infty\text{ a.e. on }
\opensetcompactness
\setminus E\,,\\
			&{ \nabla\funzioni_vito_k\to \grad \varphi_\infty \text{ in }
L^1(\opensetcompactness\setminus E;\R^\dimension)}\,,\\
			&\Hausdorff(S_{\varphi_\infty}\cup \partial^*E) \leq
\liminf_{k\rightarrow +\infty}\Hausdorff(S_{\funzioni_vito_k}).
		\end{split}
	\end{equation*}  
\end{theorem}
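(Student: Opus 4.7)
The plan is to combine truncation and diagonal extraction with Ambrosio's bounded $SBV$ compactness (Theorem \ref{teo:ambrosio}), then use the auxiliary bounded sequence $\arctan(\funzioni_vito_k)$ to detect the blow-up set $E$ and control its boundary.

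\textbf{Step 1 (Truncation and compactness at every level).} For each integer $M\ge 1$ I would introduce
$$\psi_k^M := (\funzioni_vito_k \wedge M) \vee (-M) \in SBV^p(\opensetcompactness) \cap L^\infty(\opensetcompactness),$$
and note that $\|\psi_k^M\|_\infty \le M$, $|\grad \psi_k^M| \le |\grad \funzioni_vito_k|$ a.e., and $S_{\psi_k^M} \subseteq S_{\funzioni_vito_k}$, so $(\psi_k^M)_k$ satisfies the hypotheses of Theorem \ref{teo:ambrosio} at every fixed $M$. Applying that theorem for each $M$ and extracting a diagonal subsequence (still labelled by $k$), I would obtain, for every $M$, a limit $\psi^M \in SBV^p(\opensetcompactness) \cap L^\infty(\opensetcompactness)$ with $\psi_k^M \to \psi^M$ weakly-$*$ in $BV$ and a.e., together with the uniform bound $\int |\grad \psi^M|^p\,\dx + \Hausdorff(S_{\psi^M}) \le C$.

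\textbf{Step 2 (Identification of $\varphi_\infty$ and $E$; pointwise convergence).} Since a.e.\ truncation commutes with the a.e.\ limit, the family $\{\psi^M\}$ is consistent: $(\psi^{M'}\wedge M)\vee(-M)=\psi^M$ a.e.\ whenever $M'>M$. I would set $E^{\pm} := \bigcap_M \{\psi^M = \pm M\}$, $E := E^+ \cup E^-$, and define $\varphi_\infty$ as the pointwise limit of $\psi^M$ on $\opensetcompactness \setminus E$ (finite by consistency), extended by $0$ on $E$. For $x \in \opensetcompactness \setminus E$, choosing $M_0$ with $|\psi^{M_0}(x)| < M_0$ would yield $|\funzioni_vito_k(x)| < M_0$ eventually and $\funzioni_vito_k(x) = \psi_k^{M_0}(x) \to \psi^{M_0}(x) = \varphi_\infty(x)$, while on $E^{\pm}$ one would have $\psi_k^M(x) \to \pm M$ for every $M$, forcing $|\funzioni_vito_k(x)| \to +\infty$; this identifies $E$ with the set in the statement. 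The identity $\grad \varphi_\infty = \grad \psi^M$ a.e.\ on $\{|\varphi_\infty| < M\}$, together with the uniform $L^p$ bound, would give $\varphi_\infty \in GSBV^p(\opensetcompactness)$, and the strong $L^1$-convergence of gradients on $\opensetcompactness \setminus E$ would follow by combining the weak $L^p$-convergence provided by Theorem \ref{teo:ambrosio} with the equi-integrability coming from the uniform $L^p$ bound (Vitali).

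\textbf{Step 3 (Finite perimeter of $E$ and the jump bound).} To control $E$ and the jump measure I would apply Theorem \ref{teo:ambrosio} also to the bounded sequence $u_k := \arctan(\funzioni_vito_k) \in SBV(\opensetcompactness) \cap L^\infty(\opensetcompactness)$, which satisfies $|\grad u_k| \le |\grad \funzioni_vito_k|$ and $S_{u_k} \subseteq S_{\funzioni_vito_k}$. Along a further subsequence $u_k \to u$ a.e., with $u \in SBV(\opensetcompactness)$, $|u| \le \pi/2$, and $\Hausdorff(S_u) \le \liminf_k \Hausdorff(S_{\funzioni_vito_k})$. Matching pointwise limits would give $u = \arctan(\varphi_\infty)$ on $\opensetcompactness \setminus E$ and $u = \pm \pi/2$ on $E^{\pm}$, hence $E^{\pm} = \{u = \pm \pi/2\}$. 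Since $\tan$ is a smooth diffeomorphism of $(-\pi/2, \pi/2)$, $S_u \cap (\opensetcompactness \setminus E) = S_{\varphi_\infty}$ up to $\Hausdorff$-null sets, while $\partial^* E^{\pm} \subseteq S_u$ up to $\Hausdorff$-null sets would follow from the approximate-continuity characterization of non-jump points of $SBV$ functions (at any non-jump point $u$ has a unique approximate limit, but on $\partial^* E^+$ the one-sided approximate limits are $\pi/2$ and a strictly smaller value). Combining these,
$$\Hausdorff(S_{\varphi_\infty} \cup \partial^* E) \le \Hausdorff(S_u) \le \liminf_k \Hausdorff(S_{\funzioni_vito_k}),$$
which also yields the finite perimeter of $E$. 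The hardest step will be precisely this one: proving the inclusion $\partial^* E \subseteq S_u$ and the $\Hausdorff$-a.e.\ disjointness between $S_{\varphi_\infty}$ and $\partial^* E$ requires the fine structure of $SBV$ functions and a careful analysis of the one-sided traces of $u$ at $\partial^* E$.
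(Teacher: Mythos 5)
The paper does not prove Theorem~\ref{teo:vito}: it is quoted as the $GSBV^p$ variant of \cite[Theorem~1.1]{CC}, so there is no in-paper argument to compare against. Your truncate-and-diagonalize skeleton (Steps~1--2, construction of $\psi^M$, $E$ and $\varphi_\infty$) is sound, but two of the conclusions are not reached by the arguments you give. First, the strong convergence $\nabla\funzioni_vito_k\to\nabla\varphi_\infty$ in $L^1(\opensetcompactness\setminus E)$ cannot be obtained from ``weak $L^p$ convergence plus equi-integrability (Vitali)''. Vitali's theorem upgrades \emph{pointwise or in-measure} convergence to $L^1$ convergence under equi-integrability; it says nothing about a merely weakly convergent sequence (think of $f_k(x)=\sin(kx)$). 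Ambrosio's theorem (\ref{teo:ambrosio}) only supplies weak $L^p$ convergence of the approximate gradients $\nabla\psi_k^M$, not a.e.\ convergence, so the hypothesis Vitali needs is simply absent. Second, the inclusion $\partial^*E\subseteq S_u$ ($\Hausdorff$-a.e.) is not justified by the approximate-continuity argument, because the outer one-sided approximate limit of $u$ on $\partial^*E^\pm$ need not be ``strictly smaller'' than $\pm\pi/2$.

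Concretely: take $n=1$, $\Omega=(-1,1)$, fix $\alpha\in(1-\tfrac1p,1)$ and set $u\equiv\pi/2$ on $(-1,0]$ and $u(x)=\pi/2-x^\alpha$ on $(0,1)$. Then $u\in W^{1,p}(\Omega)$ is continuous, $S_u=\emptyset$, yet $E^+=\{u=\pi/2\}=(-1,0)$ has $\partial^*E^+=\{0\}\not\subset S_u$: both one-sided approximate limits at $0$ equal $\pi/2$, so nothing in $SBV$ fine theory forces a jump there, and your proposed inequality $\Hausdorff(S_{\varphi_\infty}\cup\partial^*E)\le\Hausdorff(S_u)$ fails. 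What rules this configuration out in the actual theorem is the $L^p$ bound, $p>1$, on $\nabla\funzioni_vito_k$ itself (not on $\nabla u_k$): here $\varphi_\infty=\tan u=\cot(x^\alpha)$ has $|\nabla\varphi_\infty|\sim x^{-(\alpha+1)}\notin L^p$ near $0$, and any $GSBV^p$ sequence with $\sup_k\|\nabla\funzioni_vito_k\|_{L^p}<\infty$ cannot produce it. Once you pass to $u_k=\arctan\funzioni_vito_k$ you only retain the weaker bound $\|\nabla u_k\|_{L^p}\le\|\nabla\funzioni_vito_k\|_{L^p}$, which is perfectly consistent with the counterexample, so the information needed to control $\partial^*E$ has been thrown away. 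You flag this step as ``the hardest'', and rightly so, but the route you sketch would not close it; the blow-up rate of $\nabla\funzioni_vito_k$ near $\partial^*E$ must enter the argument quantitatively, which is exactly where \cite{CC} deviates from a one-shot bounded change of variable.
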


\subsection{Liftings of $\mathbb{S}^1$-valued maps}
\label{subsec:liftings_of_S1_maps}
 Let $\Omega\subset\R^n$ be a bounded connected and simply connected
open  set with Lipschitz boundary, and  $u\colon\Omega\to \mathbb{S}^1$ 
be a measurable function. A \textit{lifting} of $u$ is a measurable function $\varphi\colon\Omega\to\R$ such that 
 \begin{equation*}
 	u(x)=e^{i\varphi(x)}\quad \text{for a.e. }x\in\Omega\,.
 \end{equation*}
{
Given a Borel set $B\subseteq\Omega$, we say that $\varphi:B\rightarrow \R$ is a lifting of $u$ on $B$ if the previous equality holds a.e. on $B$. 
}Clearly, 
if $\varphi$ is a lifting of $u$, then so is $\varphi+2\pi m$ 
for all $m\in \mathbb{Z}$.

If $u$ has some regularity, a natural question is  whether $\varphi$ can be chosen with the same regularity. The answer is 
partially positive, 
see \cite{Brezis-Mironescu,Dav-Ign} for more details:
\noindent
\begin{enumerate}[label=$(\arabic*)$]
\item If $u\in C^k(\overline\Omega;\mathbb{S}^1)$ for some $k\ge0$, then
$u$ has 
a lifting $\varphi\in C^k(\overline{\Omega})$, unique (mod. $2\pi$),
\cite[Lemma 1.1]{Brezis-Mironescu};
\item If $u\in C^k(\overline\Omega;\mathbb{S}^1)\cap W^{1,p}(\Omega;\mathbb{S}^1)$ for some $p\in[1,+\infty]$, then $u$ has 
 a lifting $\varphi\in C^k(\overline{\Omega})\cap W^{1,p}(\Omega)$;
If $n=1$ and $u\in W^{1,p}(\Omega;\mathbb{S}^1)$ for some $p\in[1,+\infty)$, then $u$ has 
 a lifting $\varphi\in W^{1,p}(\Omega)$; 
\item If $n\ge2$ and $u\in W^{1,p}(\Omega;\mathbb{S}^1)$ for some $p\in[2,+\infty)$, then $u$ has 
 a lifting $\varphi\in W^{1,p}(\Omega)$. Moreover $\varphi$ is unique (mod $2\pi$);
\item If $n\ge 2$, then for every $p\in[1,2)$ there exists $u\in W^{1,p}(\Omega;\mathbb{S}^1)$ for which 
there are no liftings $\varphi\in W^{1,p}(\Omega)$,
see \cite[Theorem 1.2, Remark 1.9]{Brezis-Mironescu}.

\end{enumerate}
{A well-known example of property (4) when $n=2$ is the 
vortex map $u_V$ discussed in 
the introduction. 
Indeed it can be shown
{\cite[Pages 17-19]{Brezis-Mironescu}}
that there are no liftings of $u$ in $W^{1,1}(B_1)$
(and thus there are no liftings in $W^{1,p}(B_1)$ for all $p\in[1,2)$).\\
}

\noindent
Next we recall some regularity results on lifting of $BV$ maps.
\begin{theorem}[\textbf{Davila-Ignat}]\label{thm:dav-ign}
	Let $u\in BV(\Omega;\mathbb{S}^1)$. 
Then there exists a lifting $\varphi\in BV(\Omega)$ such that $\|\varphi\|_{L^\infty}\le2\pi$ and $|\varphi|_{BV}\le2|u|_{BV}$.
\end{theorem}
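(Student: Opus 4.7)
The strategy is to build $\varphi$ by picking an optimal branch cut for the argument of $u$. For each $\theta_0\in[-2\pi,0]$, let $\varphi_{\theta_0}\colon\Omega\to[\theta_0,\theta_0+2\pi)$ be the measurable function (unique up to null sets) characterized by $e^{i\varphi_{\theta_0}}=u$ a.e.\ in $\Omega$; it is automatically a lifting of $u$ and, by its very range, satisfies $\|\varphi_{\theta_0}\|_{L^\infty}\le 2\pi$, so the sup-norm bound is built in. The task therefore reduces to choosing $\theta_0$ so that $|\varphi_{\theta_0}|_{BV}(\Omega)\le 2|u|_{BV}(\Omega)$.

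I would then exploit that the super-level sets of $\varphi_{\theta_0}$ are preimages of arcs, $\{\varphi_{\theta_0}>s\}=u^{-1}(A^{\theta_0}_s)$ with $A^{\theta_0}_s=\{e^{i\beta}:\beta\in(s,\theta_0+2\pi)\}$, and apply the scalar BV coarea formula to obtain, for a.e.\ $\theta_0$,
$$
|\varphi_{\theta_0}|_{BV}(\Omega)=\int_{\theta_0}^{\theta_0+2\pi} P\bigl(u^{-1}(A^{\theta_0}_s);\Omega\bigr)\,ds.
$$
Integrating this also in $\theta_0$, setting $\tau=s-\theta_0\in(0,2\pi)$, and applying Fubini turns the double integral into an average of $P(u^{-1}(A);\Omega)$ over all arcs $A\subset\mathbb S^1$. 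The quantitative input is the estimate
$$
\int_0^{2\pi} P\bigl(u^{-1}(A_{L,\eta});\Omega\bigr)\,d\eta \le 2\,|u|_{BV}(\Omega)\qquad\text{for every }L\in(0,2\pi),
$$
where $A_{L,\eta}$ is the arc of length $L$ starting at angle $\eta$. This I would prove by writing $\chi_{A_{L,\eta}}\circ u$ in terms of the two scalar $BV$ functions $f_\eta(x):=\mathrm{Im}(e^{-i\eta}u(x))$ and $g_\eta(x):=\mathrm{Re}(e^{-i\eta}u(x))$, whose total variations are controlled by $|u|_{BV}(\Omega)$, and then invoking the scalar coarea formula for these. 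Summing up yields
$$
\int_{-2\pi}^{0}|\varphi_{\theta_0}|_{BV}(\Omega)\,d\theta_0\le 4\pi\,|u|_{BV}(\Omega),
$$
and the mean-value (pigeonhole) principle produces some $\theta_0^{\ast}\in[-2\pi,0]$ with $|\varphi_{\theta_0^{\ast}}|_{BV}(\Omega)\le 2|u|_{BV}(\Omega)$; the choice $\varphi:=\varphi_{\theta_0^{\ast}}$ then meets both conclusions of the theorem.

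The main obstacle is justifying the coarea/Fubini manipulations rigorously when $Du$ has a non-trivial Cantor part and when the arc indicator on $\mathbb S^1$ fails to be Lipschitz, so that the usual $BV$ chain rule does not apply directly to $\chi_{A_{L,\eta}}\circ u$. My way around this is a two-step regularization: first prove the estimate for smooth (or at least continuous $W^{1,1}$) $\mathbb S^1$-valued maps, where all the identities above are unambiguous and $\varphi_{\theta_0}$ is clearly in $BV$ for a.e.\ $\theta_0$; then extend to a general $u\in BV(\Omega;\mathbb S^1)$ through the strict $BV$-density of smooth $\mathbb S^1$-valued maps, passing to a weak-$\ast$ limit of the associated liftings $\varphi_k$ by virtue of their uniform $L^\infty$-bound and $BV$-compactness, and invoking the lower semicontinuity of the total variation (together with $|u_k|_{BV}\to|u|_{BV}$ along strict convergence) to preserve the constant $2$ in the limit.
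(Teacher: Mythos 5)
The paper does not prove this statement; the proof is a citation to D\'avila--Ignat and Brezis--Mironescu. Your overall strategy --- fix a branch cut at angle $\theta_0$, write $|\varphi_{\theta_0}|_{BV}$ through the scalar coarea formula as an integral of perimeters of arc preimages, integrate in $\theta_0$ and pigeonhole --- is precisely the D\'avila--Ignat idea, and your Fubini bookkeeping is correct.

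However, both of your routes to the averaged estimate have gaps. The uniform-in-$L$ bound $\int_0^{2\pi}P\bigl(u^{-1}(A_{L,\eta});\Omega\bigr)\,d\eta\le 2|u|_{BV}(\Omega)$ is in fact \emph{false} for general $u\in BV(\Omega;\mathbb S^1)$: take $u$ jumping from $e^{i\alpha}$ to $e^{i\beta}$ across a hypersurface $J\subset\Omega$, with geodesic distance $d:=\mathrm{dist}_{\mathbb S^1}(e^{i\alpha},e^{i\beta})\in(0,\pi]$. For $L\in[d,2\pi-d]$ the set $u^{-1}(A_{L,\eta})$ has perimeter $\mathcal H^{n-1}(J)$ exactly when one of $e^{i\alpha},e^{i\beta}$ lies in $A_{L,\eta}$ and the other does not, which occurs for a set of $\eta$ of measure $2d$; hence the left side equals $2d\,\mathcal H^{n-1}(J)$, while $2|u|_{BV}=2\,|e^{i\alpha}-e^{i\beta}|\,\mathcal H^{n-1}(J)=4\sin(d/2)\,\mathcal H^{n-1}(J)$, and $2d>4\sin(d/2)$ since $\sin t<t$ for $t>0$. (The final bound $\int_{-2\pi}^{0}|\varphi_{\theta_0}|_{BV}\,d\theta_0\le 4\pi|u|_{BV}$ does still hold, because for the jump part the extra integration in $L$ reduces it to $d(2\pi-d)\le 4\pi\sin(d/2)$ on $[0,\pi]$, which one can verify; but your intermediate uniform-in-$L$ step breaks.) Your fallback, regularization, also fails: smooth or $W^{1,1}$ maps into $\mathbb S^1$ are \emph{not} strictly dense in $BV(\Omega;\mathbb S^1)$. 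If $u_k\in W^{1,1}(\Omega;\mathbb S^1)$ and $u_k\to u$ in $L^1$ near a jump of $u$ from $a$ to $b$ with $a\ne b$, the values of $u_k$ must traverse an $\mathbb S^1$-arc joining $a$ to $b$ in a thin transition layer, so $\liminf_k|u_k|_{BV}$ picks up the geodesic length $\mathrm{dist}_{\mathbb S^1}(a,b)$ per unit $\mathcal H^{n-1}$-measure of $J_u$, which strictly exceeds the chord $|a-b|$ contributing to $|u|_{BV}$; thus $|u_k|_{BV}\to|u|_{BV}$ cannot hold. What the cited proof actually does is carry out the averaging directly on the decomposition $Du=\nabla u\,\mathcal L^n+D^ju+D^cu$, reducing the jump (and Cantor) contribution to the pointwise inequality above rather than approximating $u$.
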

\begin{proof}
See \cite[Theorem 1.1]{Dav-Ign}, and 
also \cite[Theorem 1.4]{Brezis-Mironescu}.
\end{proof}

\begin{remark}\label{rem:dav-ign}\rm
If $u\in SBV(\Omega;\mathbb{S}^1)$ then $\varphi$ of Theorem \ref{thm:dav-ign} can be chosen in $SBV(\Omega)$. If $u\in SBV^p(\Omega;\mathbb{S}^1)$ for some $p>1$ then $\varphi$ of Theorem \ref{thm:dav-ign} can be chosen in $SBV^p(\Omega)\cap  L^\infty(\Omega; [0,2\pi])$.
\end{remark}

As usual, for any lifting $\varphi\in SBV(\Om)$ of $u$ 
we write\footnote{$S_\varphi^I$ stands for the ``integer'' part of the
jump,
and $S_\varphi^f$ for the ``fractional'' part.} 
$S_\varphi=S_\varphi^I\cup S_\varphi^f$ where 
$$
S^I_\varphi:=\{x\in S_\varphi:\jump{\varphi}(x)\in 2\pi\mathbb Z\},
\qquad  S_\varphi^f:=S_\varphi\setminus S_\varphi^I.
$$
Notice that in particular $S^f_\varphi=S_u$.  

Let $u\in BV(\Omega;\mathbb{S}^1)$ and consider the minimum problem
\begin{equation*}
\inf\{|\varphi|_{BV}\colon \varphi\in BV(\Omega),\, e^{i\varphi}=u\text{ a.e. in }\Omega\}.
\end{equation*}
Then there exists 
a minimizer $\varphi\in BV(\Omega)$ 
such that $|\varphi|_{BV}\le 2| u|_{BV}$ and $0\le \int_\Omega\varphi\dx\le 2\pi|\Omega|$ \cite[page 25]{Brezis-Mironescu}. 
As explained in the
introduction, we are instead concerned with the existence of a 
lifting which minimizes the measure of the jump set. This will be
the argument of the next section.

 \section{On jump minimizing liftings}
\label{sec:existence_of_a_lifting_of_minimal_jump}

In view of the applications in Section \ref{sec:Gamma_convergence},
we are concerned
with
the analysis of the following minimization problem, which has
an independent interest: let $p>1$, let $u\in SBV^p(\Omega;\mathbb{S}^1)$ and  define
\begin{align}\label{min-prob}
	\minvalueup
:=\inf
\left\{\mathcal H^{n-1}(S_\varphi):\varphi\in GSBV^p(\Om),\;e^{i\varphi}=u\text{ a.e. in }\Om
\right\}\,.
\end{align}
We observe that, in \eqref{min-prob},
 the set of $\varphi$
between parentheses is non empty, due to Remark \ref{rem:dav-ign}.
The reason of utilizing the space 
$GSBV^p(\Omega)$ instead that $SBV^p(\Omega)$ can be understood from the following example.

\subsection{An example}\label{sec:an_example}
In this section we will show that, in general, 
a lifting minimizing the right-hand side 
of \eqref{min-prob} 
cannot be found in $SBV^p(\Om)$, for any $p \geq 1$. The
strategy consists in constructing a real-valued
function $\varphi$ {such that, letting $u:=e^{i\varphi}$, the following hold:
\begin{itemize}
\item $\varphi\in GSBV^p(\Omega)\setminus SBV(\Omega);$
\item $S_\varphi
= S_u$;
\item $\Omega\setminus S_\varphi$ is arcwise connected.
\end{itemize}
In this way $\varphi$ is the only minimizer (modulo addition of a constant in $2\pi\mathbb{Z}$) of \eqref{min-prob} for $u$.
}

We consider $\Om:=(0,1)^2\subset\R^2$, and three sequences $(R_n)$, $(T_n)$,
$(B_n)$ 
of open rectangles contained in $\Om$ defined,
for an even integer $n\geq 2$, as follows:
$$
 \begin{aligned}
& R_n\text{ has vertices }\left(\frac{1}{2^{n+1}},0\right),\; \left(\frac{1}{2^{n-1}},0\right),\;\left(\frac{1}{2^{n-1}},\frac{1}{10^n}+\frac{1}{n^2}\right),\;\left(\frac{1}{2^{n+1}},\frac{1}{10^n}+\frac{1}{n^2}\right);
\\	
& T_n\text{ has vertices }\left(\frac{1}{2^{n+1}},\frac{1}{n^2}\right),\; \left(\frac{1}{2^{n-1}},\frac{1}{n^2}\right),\;\left(\frac{1}{2^{n-1}},\frac{1}{10^n}+\frac{1}{n^2}\right),\;\left(\frac{1}{2^{n+1}},\frac{1}{10^n}+\frac{1}{n^2}\right);
\\	
& B_{n+1}\text{ has vertices }\left(\frac{1}{2^{n+2}},0\right),\; \left(\frac{1}{2^{n}},0\right),\;\left(\frac{1}{2^{n}},\frac{1}{10^{n+1}}\right),\;\left(\frac{1}{2^{n+2}},\frac{1}{10^{n+1}}\right).
\end{aligned}
$$
Observe that the $R_n$'s are pairwise disjoint and 
the closures $\overline T_n$ of $T_n$ are pairwise disjoint.
Also the $B_n$'s are pairwise disjoint, 
but their boundaries share some part of the lateral edges 
(see Fig. \ref{fig1}). 
Furthermore,
for all $n, m\geq 2$ even,
$$
T_n\subset R_n, \quad
B_{n+1}\subset R_n\cup R_{n+2},
\quad
\overline T_n\cap \overline B_m=\varnothing.
$$

\begin{figure}
	\includegraphics[width=0.9\textwidth]{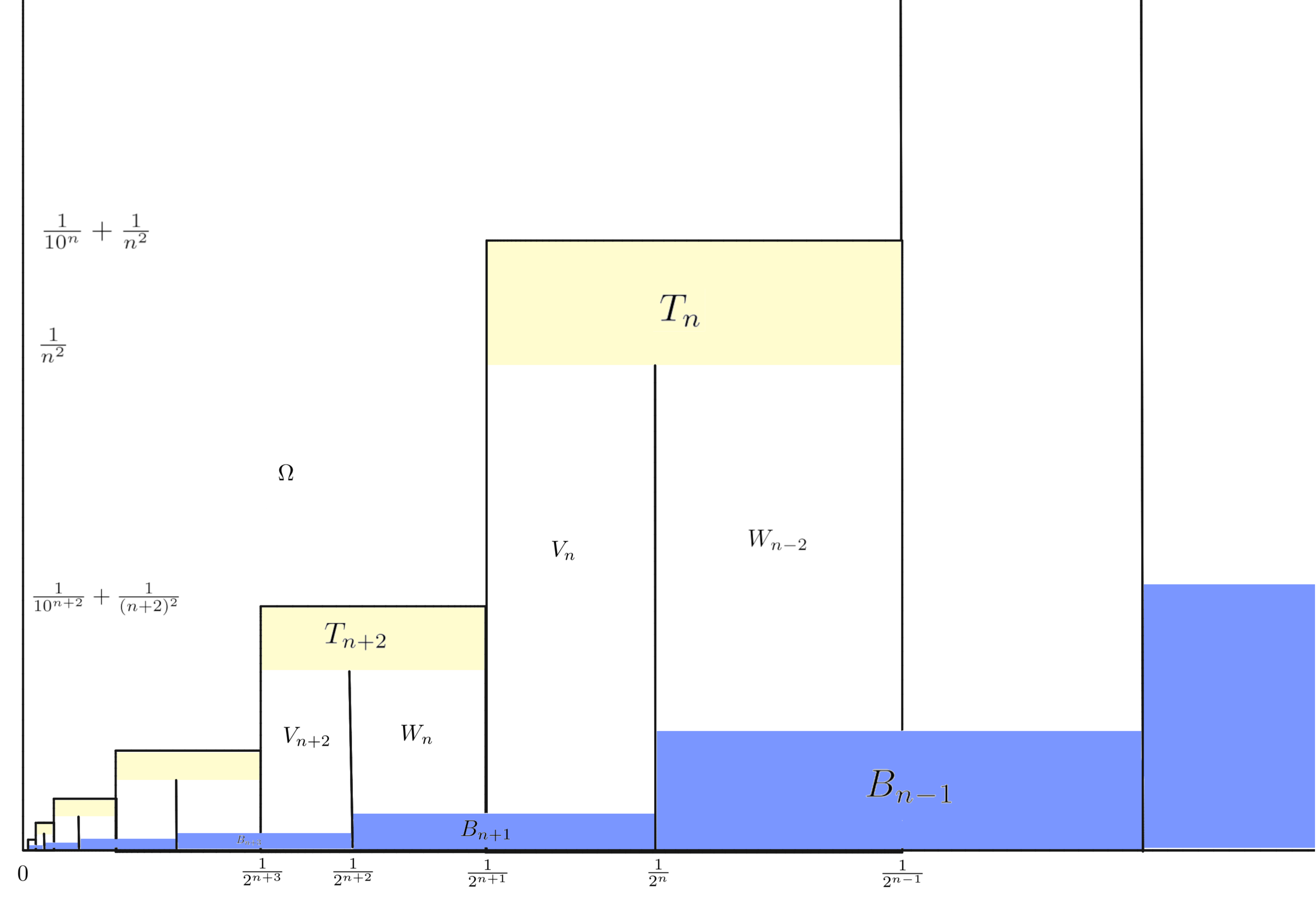}
	\caption{The rectangles 
$T_n$ and $B_n$ in 
a portion of $\Om=(0,1)^2$.
	}\label{fig1}
\end{figure}

Now, we define a map\footnote{The value 
	$4$ has not particular meaning: any positive constant could be chosen as well. A similar comment applies to \eqref{def-phi2}} $\liftexample:\Om\rightarrow \R$
\begin{align}\label{def-phi}
	\liftexample:=	\begin{cases}
		n^2&\text{in  }T_n\text{ for $n\geq 2$ even},\\
		(n+1)^2&\text{in  }B_{n+1}\text{ for $n\geq 2$ even},
\\		4&\text{in } \Omega \setminus 
\bigcup_{\substack{n\geq2\\n\text{ even}}}R_n.
\end{cases} 
\end{align}
It remains to define $\liftexample$ on 
$$
\displaystyle 
\bigcup_{\substack{n\geq2\\n\text{ even}}}R_n\setminus 
(\bigcup_{\substack{n\geq2\\n\text{ even}}} (T_n\cup B_{n+1})).
$$
 To this purpose we set
  \begin{align}\label{def-phi2}
 \liftexample:=4 \text{ in the open rectangle with vertices }\left(\frac{1}{4},0\right),\; \left(\frac{1}{2},0\right),\;\left(\frac{1}{2},\frac{1}{4}+\frac1{10^2}\right),\;\left(\frac{1}{4},\frac{1}{4}+\frac1{10^2}\right).
 \end{align}
Further, we consider two sequences 
$(V_n)$, $(W_n)$ of open
rectangles, defined for any even $n\geq 2$, as follows:
$$
  \begin{aligned}
&	V_n\text{ has vertices }\left(\frac{1}{2^{n+1}},\frac{1}{10^{n+1}}\right),\; \left(\frac{1}{2^{n}},\frac{1}{10^{n+1}}\right),\;\left(\frac{1}{2^{n}},\frac{1}{n^2}\right),\;\left(\frac{1}{2^{n+1}},\frac{1}{n^2}\right),
\\
&	W_n\text{ has vertices }\left(\frac{1}{2^{n+2}},\frac{1}{10^{n+1}}\right),\; \left(\frac{1}{2^{n+1}},\frac{1}{10^{n+1}}\right),\;\left(\frac{1}{2^{n+1}},\frac{1}{(n+2)^2}\right),\;\left(\frac{1}{2^{n+2}},\frac{1}{(n+2)^2}\right),
\end{aligned}
$$
see Fig. \ref{fig1}.

By \eqref{def-phi} the value of $\liftexample$ on the top edge of $V_n$ is $n^2$, whereas on the bottom edge is $(n+1)^2$, and we define $\liftexample$ on $V_n$ as the 
affine interpolation of 
these two values.
The value of $\liftexample$ on the top edge of $W_n$ is $(n+2)^2$, whereas on the bottom 
edge is $(n+1)^2$, 
and  we define $\liftexample$ on $W_n$ as the 
affine interpolation of these two values.
Observe that, looking from top to bottom, $\liftexample$ increases in $V_n$ 
and decreases in $W_n$. Clearly, 
\begin{equation}\label{eq:not_bounded}
\liftexample 
\notin L^\infty(\Om),
\end{equation}

\begin{equation}\label{eq:piece}
\liftexample 
\textrm{ is piecewise constant in } \Om \setminus (\bigcup_{\substack{n\geq2\\n\text{ even}}} 
(V_n \cup W_n)), 
\textrm{ piecewise affine in } \bigcup_{\substack{n\geq2\\n\text{ even}}} V_n \cup W_n, 
\end{equation} 
and
\begin{equation*}
	\begin{split}
&|\nabla \liftexample|=
\Big|\frac{\partial \liftexample}{\partial x_2}\Big|
= \frac{(n+1)^2-n^2}{\frac1{n^2}-\frac1{10^{n+1}}}\approx n^3\quad \text{ on }V_n,\\
&
|\nabla \liftexample|=
\Big
|\frac{\partial \liftexample}{\partial x_2}\Big| =\frac{(n+2)^2-(n+1)^2}{\frac{1}{(n+2)^2}-\frac1{10^{n+1}}}
\approx  (n+1)^3 \quad \text{ on } W_n,
	\end{split}
\end{equation*} 
being
the height of $V_n$ (resp. $W_n$)
 of order $\frac{1}{n^2}$ (resp. $\frac{1}{(n+1)^2}$). However, the 
bases 
 of $V_n$ and $W_n$ have length $\frac{1}{2^{n+1}}$ and $\frac{1}{2^{n+2}}$ respectively, so 
 \begin{equation*}
 	\begin{split}
\|\nabla \liftexample\|^p_{L^p(V_n)}\approx|v_n|n^{3p}= \frac{n^{3p-2}}{2^{n+1}} 
=: a_n,
\quad \|\nabla \liftexample\|^p_{L^p(W_n)}\approx |w_n|(n+1)^{3p}= \frac{(n+1)^{3p-2}}{2^{n+2}} =: b_n.
 	\end{split}
 \end{equation*}
In particular 
\begin{equation}\label{eq:gradiente_integrabile}
\nabla \liftexample\in L^p(\Om; \R^2) \qquad \forall p\geq 1,
\end{equation}
 since
$\sum_n a_n <+\infty$ and $\sum_n b_n < +\infty$.
On the other hand, denoting $\Sigma_T$ the union (for $n\geq 4$ even) 
of the top and left edges of the $T_n$'s, by $\Sigma_B$ 
the union (for $n\geq 4$ even) of the lateral edges of the $B_n$'s, 
and  by $\Sigma_{VW}$ the union of the lateral edges of all $V_n$'s 
and $W_n$'s, we see that the jump set $S_\liftexample$ of $\liftexample$ is exactly
$$
S_\liftexample = \Sigma_T\cup \Sigma_B\cup \Sigma_{VW};
$$ 
furthermore,
\begin{equation}\label{eq:arcwise_connected}
\Om \setminus S_\liftexample \text{ is arcwise connected},
\end{equation}
and
it is easy to check that 
\begin{equation}\label{eq:salto_finito}
\mathcal H^1(S_\liftexample) < +\infty.
\end{equation}
 In particular, from \eqref{eq:piece}, \eqref{eq:salto_finito} and 
\eqref{eq:gradiente_integrabile} we deduce $\liftexample\in GSBV^p(\Om)$, for any $p \geq 1$.

 However,
$$
\liftexample\notin SBV(\Om).
$$
 Indeed, although $D\liftexample$ has no Cantor part, 
the total variation $|D^j\varphi|(\Omega)$ of the jump part $D^j\liftexample$ 
of $D\liftexample$ is infinite. This can be seen by observing that, 
on each lateral edge of $V_n$, the jump opening $\jump{\liftexample}$ 
is of order $n$  on a segment of length of order $n^{-2}$. Thus 
 $$|D^j\liftexample|(\Om)\geq \sum_{\substack{n\geq2\\n\text{ even}}}n^{-1}=+\infty.$$  

We claim that $\liftexample$ is the unique (up to addition of a constant)
solution of \eqref{min-prob} {for}
$$
u:=e^{i\liftexample}.
$$
We have $u \in BV(\Om; \circle)$ 
and $\vert \grad u\vert = \vert \grad \liftexample\vert$.
Now, we observe that 
\begin{equation}\label{eq:stesso_salto}
S_u = S_\liftexample,
\end{equation} 
namely
$\mathcal H^1(S_u\Delta S_\liftexample)=0$,
and therefore
$$u\in SBV^p(\Om;\mathbb S^1).$$
Equality \eqref{eq:stesso_salto}
is due to the fact that the subset of $S_\liftexample$ 
where $\jump{\liftexample}\in 2\pi\mathbb Z$ is $\mathcal H^1$-negligible. 
This additionally implies that any lifting $\psi$ of $u$ must satisfy
$S_\psi\supseteq S_u$, and so 
\begin{equation}\label{eq:Spsi_Sphi}
\mathcal H^1(S_\psi)\geq \mathcal H^1(S_\liftexample).
\end{equation}
We conclude from \eqref{eq:stesso_salto} and \eqref{eq:Spsi_Sphi}
that $\liftexample$ is a minimizer of \eqref{min-prob}, and 
$\liftexample \notin 
SBV^p(\Om)$, for any $p\geq1$. Finally, from 
\eqref{eq:arcwise_connected} it follows that $\liftexample$
is unique, modulo addition of a constant in $2\pi\mathbb Z$.

\subsection{A connection with optimal transport: an example}\label{sec:optimal}

We observe in this subsection that the minimization problem \eqref{min-prob} has a strict relation with a question arising from optimal transport (see \cite{MMS,MMT} for the setting and similar formulations). To do so, we give an example in a  special case, leaving the more general ones to future treatments: for a  connected and simply-connected bounded open set $\Om\subset\R^2$, we fix $1<p<2$, $N\in\mathbb N$, and a Sobolev map $u\in W^{1,p}(\Om;\mathbb S^1)$ with distributional Jacobian determinant 
\begin{align}\label{detnablau}
	\text{{\rm Det}}(\nabla u)=\pi\sum_{i=1}^N(\delta_{x_i}-\delta_{y_i}),
\end{align}
(see \cite{Brezis-Mironescu}).
We set $\mu:=\sum_{i=1}^N\delta_{x_i}$ and $\nu:=\sum_{i=1}^N\delta_{y_i}$, where the points $x_i$'s and $y_i's$ belong to $\Omega$ are not necessarily distinct, and we consider the class of all integer multiplicity $1$-currents whose boundary is $\mu-\nu$, namely
$$\mathcal T(\mu,\nu):=\{T\in \mathcal D_1(\Om):T=(R,\theta,\tau)\text{ is an i.m.c. such that }\partial T=\mu-\nu\}.$$
Here $T=(R,\theta,\tau)$ is the integer multiplicity current given by 
$$T(\omega)=\int_R\theta(x)\langle \omega(x),\tau(x)\rangle d\mathcal H^1\qquad \qquad \forall \omega\in \mathcal D^1(\Om),$$
with $R\subset\Omega$ a $1$-rectifiable set, $\tau$ a tangent  $1$-vector to it, and $\theta:R\rightarrow\mathbb Z$ a $\mathcal H^1$-measurable function. 
A classical optimal transport problem can be formulated on the class $\mathcal T(\mu,\nu)$ in the following way: one fixes a cost function $\psi:\mathbb Z\times \mathbb S^1\rightarrow \R^+$ and study the minimization problem
\begin{align}\label{trans1}
	\Psi(\mu,\nu):=\inf\Big\{\int_R\psi(\theta,\tau)d\mathcal H^1:(R,\theta,\tau)\in \mathcal T(\mu,\nu)\Big\}.
\end{align}
We refer to 
In the special case $\psi\equiv1$ we have $\Psi(\mu,\nu)=\int_R\psi(\theta,\tau)d\mathcal H^1=\mathcal H^1(R)$ and
we claim that
\begin{align}\label{trans=jump}
	m_p[u]=\inf\{\mathcal H^1(R):(R,\theta,\tau)\in \mathcal T(\mu,\nu)\},
\end{align}
which shows the connection between our minimization problem \eqref{min-prob} and optimal transport.
We sketch the main steps to prove this, leaving details to future developments. Let $\varphi\in SBV^p(\Om)$ be a lifting\footnote{We here suppose, for simplicity, that the infimum in \eqref{min-prob} can be obtained restricting on the space $SBV^p(\Om)$.} of $u$; then 
\begin{align}
	D\varphi=\nabla \varphi\;\mathcal L^2+D^j\varphi,
\end{align}
and since $D^j\varphi=\jump{\varphi}n_{S_\varphi} \mathcal H^1\res S_\varphi$, its rotated\footnote{We take the conterclockwise $\pi/2$-rotation.} $(D^j\varphi)^\perp=\jump{\varphi}n^\perp_{S_\varphi} \mathcal H^1\res S_\varphi$ can be identified with $2\pi$ times the $1$-current $$T_\varphi=(S_\varphi,\frac{\jump{\varphi}}{2\pi},n^\perp_{S_\varphi})$$ which has integer multiplicity.
We claim that $$\partial T_\varphi=\mu-\nu\quad \text{ in }\mathcal D_0(\Om);$$ indeed, if we identify $T_\varphi\in\mathcal D_1(\Om)$ with a vector-valued measure and the elements of $\mathcal D_0(\Om)$ with scalar measures, $\partial T_\varphi\in \mathcal D_0(\Om)$ corresponds to minus the divergence of $T_\varphi$, and so 
\begin{align}
	\partial T_\varphi=-\frac{1}{2\pi}\text{{\rm Div}}((D^j\varphi)^\perp)=\frac{1}{2\pi}\text{{\rm Div}}(\nabla ^\perp\varphi)
\end{align} where we have used that 
$$\text{{\rm Div}}(D^\perp\varphi)=\text{{\rm Curl}}(D\varphi)=0.$$
Here ${\rm Curl}(D\varphi)$ is defined in a distributional sense.
Recalling the definition of distributional Jacobian determinant for Sobolev maps \cite[Page 12, formula (1.33)]{Brezis-Mironescu} one has 
\begin{align}
	\text{{\rm Det}}(\nabla u)=\frac12\text{{\rm Div}}(u_1\nabla^\perp u_2-u_2\nabla^\perp u_1)=\frac12\text{{\rm Div}}(\nabla^\perp\varphi)
\end{align}
so we conclude $\partial T_\varphi=\mu-\nu$.\\ In particular $T_\varphi$ is admissible for \eqref{trans1}, i.e., $T_\varphi\in \mathcal{T}(\mu,\nu)$, and we obtain
\begin{align}
	m_p[u]\geq\inf\{\mathcal H^1(R):(R,\theta,\tau)\in \mathcal T(\mu,\nu)\}.
\end{align}
To see that also the opposite inequality holds, let $T=(R,\theta,\tau)$ be admissible for \eqref{trans1}, and let us decompose $T=\sum_{i=1}^{+\infty }T_i$ in indecomposable components, so that by Federer decomposition theorem \cite[Sections 4.2.25 and 4.5.9]{Federer:69} $T_i$ is either a loop with multiplicity $\pm1$ or a Lipschitz parametrized path from one point $y_j$ to some $x_k$. Up to erasing  the loops (operation that does not increase the energy in \eqref{trans1}) we may suppose that there are exactly $N$ such Lipschitz paths, $T=\sum_{i=1}^NT_i$. In particular $T_i=(R_i,1,\tau)$ is such that $R_i$ is a closed set image of $[0,1]$ under the Lipschitz injective map $\gamma_i$, and so $R=\cup_{i=1}^N R_i$ is closed. Consider then the open set $\Om\setminus R$ and let $\varphi$ be a lifting of $u$ in   $\Om\setminus R$ with jump set of minimal length. We claim that $\mathcal H^1(S_\varphi\cap (\Om\setminus R))=0$; this is equivalent to say that there exists a lifting of $u$ on $\Omega\setminus R$ with no jumps. The latter can be shown by the following observation: we connect the components of $R$ with a curve $\sigma$ in such a way $\Om\setminus (R\cup\sigma)$ is simply-connected. Then by \cite[Lemma 1.8]{Brezis-Mironescu} the lifting $\varphi$ has no jumps\footnote{Although the domain $\Om\setminus( R\cup \sigma)$ is not Lipschitz, the same result can be obtained by approximating it with suitable Lipschitz subdomains.} on $\Omega\setminus (R\cup \sigma)$. To see that there is no jump of $\varphi$ on $\sigma$, it is enough to observe that,  given any closed loop $\Gamma$ in $\Omega\setminus R$, the topological degree of $u$ on $\Gamma$ must be null (by construction of $R$).

From the claim, since $\varphi\in SBV^p(\Om)$ satisfies $S_\varphi\subset R$, we easily infer
\begin{align}
	m_p[u]\leq\inf\{\mathcal H^1(R):(R,\theta,\tau)\in \mathcal T(\mu,\nu)\}
\end{align}
by the arbitrariness of $T$. 

In a similar manner, we see how the problem in \eqref{min-prob} and the aforementioned transport problem can be used to solve Steiner-type problems. Again, assume for simplicity that the singularities of  $u\in W^{1,p}(\Om;\mathbb S^1)$ satisfy \eqref{detnablau}. Assume also that $x_i=\overline x$ for all $i=1,\dots,N$, and that the points $y_i$ are all distinct and different from $\overline x$. We look for the connected set of minimal length containing the $N+1$ points in the family  $C=\{\overline x,y_i\colon i=1,\dots,N\}$. To do this, consider a compact connected set $K$ with $\mathcal H^1(K)<+\infty$, containing $C$ and let us suppose that ${\rm dist}(C,\partial \Om)>\mathcal H^1(K)$; then the Steiner problem for $C$ can be proven to be
\begin{align}
	m_p[u]=\inf\{\mathcal H^1(L):L\text{ is connected, }L\supset C\},
\end{align}  
and the jump set of a  jump minimizing lifting for $u$ is a minimizer of the right-hand side in the above expression.

\subsection{Main results on sequences of liftings}
The main results of the first part of 
the  paper are the following.
\begin{theorem}[\textbf{Compactness and lower semi-continuity}]\label{thm:compactness&lsc}
Let $p>1$, $u\in SBV^p(\Om;\mathbb S^1)$ and  
$(\varphi_k)_{k\ge1}\subset GSBV^p(\Om)$ be a  
sequence of liftings of $u$ with 
$$
\sup_{k \in \NN}
\Hausdorff(S_{\varphi_k})<+\infty. 
$$
Then 
there exist a Caccioppoli
partition $(E_i)_{i\ge1}$
 of $\Omega$ and
a not-relabelled subsequence of indices $k$ for which the following 
holds: 
\begin{itemize}
\item[(a)] 
there exists a lifting $\varphi_\infty\in GSBV^p(\Om)$ 
of $u$ in $\Omega$,
\item[(b)]
there exists a sequence
$(d^{(i)}_k)_{{k\ge1}}\subset \mathbb Z$ for any $i \in \NN$,
\end{itemize}
so that 
	\begin{equation}\label{eq:compactness}
\begin{split}
			& \lim_{k \to +\infty} 
(\varphi_k(x)-2\pi d^{(i)}_k) = \varphi_\infty(x)\qquad 
\forall i \in \NN, ~ 
\text{ for a.e. }x\in E_{i}\,,\\
			&\lim_{k \to +\infty} 
|\varphi_k(x)-2\pi d^{(i)}_k| = +\infty\qquad \forall i \in \NN, ~ 
\text{ for a.e. }x\in \Om\setminus E_{i}\,,
		\end{split}
	\end{equation}
and 
	\begin{equation}\label{eq:lsc}
		\liminf_{k\rightarrow +\infty}\Hausdorff(S_{\varphi_k})\ge 	\Hausdorff(S_{\varphi_\infty})\,.
	\end{equation}
\end{theorem}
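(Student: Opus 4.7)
The first step is to notice that since each $\varphi_k\in GSBV^p(\Omega)$ is a lifting of $u\in SBV^p(\Omega;\mathbb S^1)$, the chain rule gives $|\nabla\varphi_k|=|\nabla u|$ a.e. in $\Omega$; combined with the hypothesis $\sup_k\Hausdorff(S_{\varphi_k})<+\infty$, the sequence satisfies the uniform $GSBV^p$ bound required by Theorem \ref{teo:vito}. The plan is to apply that compactness theorem iteratively to appropriately shifted copies of $(\varphi_k)$, and thereby extract both the Caccioppoli partition $(E_i)$ and the integer sequences $(d_k^{(i)})$.

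For the first iteration I would apply Theorem \ref{teo:vito} directly to $(\varphi_k)$: up to a subsequence the set $E_1:=\Omega\setminus\{|\varphi_k|\to+\infty\}$ has finite perimeter and, setting $d_k^{(1)}:=0$, one obtains a limit $\varphi_\infty^{(1)}$ with $\varphi_k\to\varphi_\infty^{(1)}$ a.e.\ on $E_1$; by continuity of $e^{i\cdot}$, this limit is a lifting of $u$ on $E_1$. If $\mathcal L^n(\Omega\setminus E_1)>0$, I would fix once and for all a bounded reference lifting $\varphi_*\in SBV(\Omega;[0,2\pi])$ via Theorem \ref{thm:dav-ign}, and use it to produce the next integer shifts: for a Lebesgue density point $x_1\in\Omega\setminus E_1$, since $\varphi_*$ is bounded while $|\varphi_k(x_1)|\to+\infty$, the integers $d_k^{(2)}:=(\varphi_k(x_1)-\varphi_*(x_1))/(2\pi)$ satisfy $|d_k^{(2)}|\to+\infty$, while the shifted lifting $\widetilde\varphi_k:=\varphi_k-2\pi d_k^{(2)}$ is bounded at $x_1$. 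A second application of Theorem \ref{teo:vito} yields a piece $E_2\subseteq\Omega\setminus E_1$ of positive measure together with a limit $\varphi_\infty^{(2)}$. Iterating along nested subsequences and diagonalizing, I would produce a (possibly countable) Caccioppoli partition $(E_i)$ and patch $\varphi_\infty:=\varphi_\infty^{(i)}$ on $E_i$ to obtain the desired $GSBV^p$ lifting of $u$ on $\Omega$.

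The jump estimate \eqref{eq:lsc} is the crux of the argument. The jump set of $\varphi_\infty$ decomposes into the internal jumps $\bigcup_i(S_{\varphi_\infty^{(i)}}\cap E_i)$ plus the boundary jumps across $\partial^*E_i\cap\partial^*E_j$. The naive use of Theorem \ref{teo:vito} at each step yields $\Hausdorff(S_{\varphi_\infty^{(i)}}\cup\partial^*E_i)\leq\liminf_k\Hausdorff(S_{\varphi_k})$, but this is the \emph{same} upper bound at every iteration and cannot be summed. To fix this, each iteration should be carried out in localized form on the open residual set $\Omega_i:=\mathrm{int}\bigl(\Omega\setminus\bigcup_{j<i}\overline{E_j}\bigr)$, so that the contributions $\liminf_k\Hausdorff(S_{\varphi_k}\cap\Omega_i)$ live on disjoint subsets of $\Omega$ and sum to at most $\liminf_k\Hausdorff(S_{\varphi_k})$.

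The main obstacle is precisely this localization and combination step: the reduced boundaries $\partial^*E_i$ and the residual jump sets $S_{\varphi_k}\setminus\bigcup_{j<i}\overline{E_j}$ must be shown to decouple across the nested subsequences, so that jumps of $\varphi_k$ ``near'' a previously extracted boundary are not double-counted. Once this technical point has been handled (in the spirit of the $GSBD^p$-compactness argument of \cite{CC}), together with the verification that the partition $(E_i)$ actually exhausts $\Omega$ up to a negligible set (which follows from iterating until the residual becomes null, using at each step a density point to ensure $|E_i|>0$), summing the localized bounds gives $\Hausdorff(S_{\varphi_\infty})\leq\liminf_k\Hausdorff(S_{\varphi_k})$ and concludes the proof.
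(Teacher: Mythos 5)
Your overall architecture (iterate a $GSBV^p$ compactness theorem along nested subsequences, extract a Caccioppoli partition, patch the limits together, then sum up localized lower-semicontinuity estimates) does mirror the paper, but two steps that you acknowledge only vaguely are in fact the heart of the proof and are not adequately handled in your plan.

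\textbf{The integer shifts and positivity of $|E_i|$.} Your rule $d_k^{(i)}:=\bigl(\varphi_k(x_1)-\varphi_*(x_1)\bigr)/(2\pi)$ for a density point $x_1$ guarantees that the shifted sequence is bounded at the single point $x_1$, but this does not imply that the set where $\widetilde\varphi_k$ stays bounded has positive measure (and it is on that set, not at $x_1$, that Theorem~\ref{teo:vito} produces the limit). For instance, if the level set of $\varphi_k-\varphi_*$ containing $x_1$ has measure shrinking to zero along the chosen subsequence, Theorem~\ref{teo:vito} can return $E=\Omega$ and your next piece is null; nothing in your argument rules this out or tells you which subsequence to take. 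The paper's Lemma~\ref{lem:passo_base} avoids this by choosing $d_k$ to maximize the measure of the level set $V^k_{d_k}$ of the piecewise-constant, $2\pi\mathbb Z$-valued difference $\varphi_k-\varphi_1$, and then using the bound on the total perimeter of the Caccioppoli partition $\{V^k_m\}$ together with the isoperimetric inequality to derive the explicit lower bound $|E|\ge n^n\omega_n|F|^n / \bigl(2^n(2C+\Hausdorff(\partial^*F))^n\bigr)$. This quantitative control is what drives the iteration: the residuals $F_i$ satisfy a lower bound on the measure taken out at each step, hence $|F_i|\to 0$ and the partition exhausts $\Omega$. Your plan to ``use at each step a density point to ensure $|E_i|>0$'' does not deliver that.

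\textbf{The lower-semicontinuity combination.} You identify correctly that summing the per-piece estimates from Theorem~\ref{teo:vito} is the crux, and you propose to localize to $\Omega_i:=\mathrm{int}\bigl(\Omega\setminus\bigcup_{j<i}\overline{E_j}\bigr)$. This is the wrong localization: $E_j$ is merely a finite-perimeter set, its topological closure can be all of $\Omega$ even when $|E_j|$ is tiny, so $\Omega_i$ may be empty; moreover even when $\Omega_i$ is non-trivial, $S_{\varphi_\infty^{(i)}}\cup\partial^*E_i$ need not be contained in $\Omega_i$, so the resulting bound would miss part of $S_{\varphi_\infty}$. The paper does not localize to nested open residuals at all. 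Instead, Lemma~\ref{lem:ricoprimenti} fixes $\delta>0$ and builds a disjoint Vitali covering of $\bigcup_i(S_i\cup\Sigma_i)$ by balls on each of which the dominant rectifiable set (one of the $S_i$ or $\Sigma_i$) has $\Hausdorff$-density at least $1-\delta$ while all the others contribute at most $\delta$; summing the ball-local estimates (each valid for \emph{any} open subset, by \eqref{eq:lem5.9:bis}) and letting $\delta\to 0$ gives $\liminf_k\Hausdorff(S_{\varphi_k})\ge\Hausdorff(S\cup\Sigma)$. This decoupling via a density-controlled covering, not a nested localization, is the step you describe as ``the main obstacle'' but do not actually supply.

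In short: the plan has the right skeleton but is missing the two ideas (the measure-maximizing shift with the isoperimetric bound, and the Vitali-covering decoupling of Lemma~\ref{lem:ricoprimenti}) that make the paper's proof work.
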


\begin{cor}[\textbf{Existence}]\label{cor:existence-minimizer}
Let $p>1$.
Then there exists a minimizer $\varphi\in GSBV^p(\Omega)$ 
of \eqref{min-prob}.
\end{cor}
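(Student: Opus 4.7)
The plan is to apply the direct method of the calculus of variations, using Theorem~\ref{thm:compactness&lsc} as the crucial compactness and lower semicontinuity ingredient.

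First I would verify that the class of competitors in \eqref{min-prob} is nonempty and that $m_p[u]$ is finite. This is already observed right after \eqref{min-prob}: Remark~\ref{rem:dav-ign} produces a lifting $\varphi\in SBV^p(\Omega)\cap L^\infty(\Omega;[0,2\pi])$ of $u$, and since $u\in SBV^p(\Omega;\mathbb S^1)$ one has $\mathcal H^{n-1}(S_\varphi)<+\infty$. In particular $SBV^p(\Omega)\subset GSBV^p(\Omega)$, so such $\varphi$ is admissible and $m_p[u]<+\infty$.

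Next I would fix a minimizing sequence $(\varphi_k)_{k\ge 1}\subset GSBV^p(\Omega)$ of liftings of $u$ such that
\begin{equation*}
\lim_{k\to +\infty}\mathcal H^{n-1}(S_{\varphi_k})=m_p[u].
\end{equation*}
In particular $\sup_{k\in\mathbb N}\mathcal H^{n-1}(S_{\varphi_k})<+\infty$, so the hypotheses of Theorem~\ref{thm:compactness&lsc} are fulfilled. Applying it yields a (not relabelled) subsequence, a Caccioppoli partition $(E_i)_{i\ge 1}$ of $\Omega$, integer sequences $(d_k^{(i)})_{k\ge 1}\subset\mathbb Z$ for each $i\in\mathbb N$, and a function $\varphi_\infty\in GSBV^p(\Omega)$ which is itself a lifting of $u$ on all of $\Omega$, such that the convergence \eqref{eq:compactness} and the lower semicontinuity bound \eqref{eq:lsc} both hold.

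To conclude, I would combine admissibility of $\varphi_\infty$ with \eqref{eq:lsc}:
\begin{equation*}
m_p[u]\le \mathcal H^{n-1}(S_{\varphi_\infty})\le \liminf_{k\to+\infty}\mathcal H^{n-1}(S_{\varphi_k})=m_p[u],
\end{equation*}
whence $\mathcal H^{n-1}(S_{\varphi_\infty})=m_p[u]$ and $\varphi_\infty$ is a minimizer, proving the corollary.

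The genuinely hard work has already been carried out in Theorem~\ref{thm:compactness&lsc}: both the translation step by the integers $d_k^{(i)}$ on each Caccioppoli piece (which prevents the sequence of liftings from escaping to infinity) and the lower semicontinuity of the jump measure under such convergence. Once that theorem is available, the present corollary is essentially immediate from the direct method; the only point worth attention is that the limit produced by the theorem is automatically a lifting of $u$ on the whole of $\Omega$, so that it is admissible for \eqref{min-prob} and not only on the individual $E_i$'s.
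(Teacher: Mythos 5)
Your proof is correct and takes exactly the approach the paper intends: the corollary is presented as an immediate consequence of Theorem~\ref{thm:compactness&lsc} via the direct method (the paper itself gives no separate argument). You have filled in the steps correctly, including the key observation that the competitor class is nonempty by Remark~\ref{rem:dav-ign} and that part (a) of Theorem~\ref{thm:compactness&lsc} guarantees $\varphi_\infty$ is a lifting of $u$ on all of $\Omega$, hence admissible for~\eqref{min-prob}.
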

As pointed out in the introduction,
in general a minimizer of \eqref{min-prob} does not exist in $SBV^p(\Om)$.

The generalization of Theorem \ref{thm:compactness&lsc}
to a sequence $\suk$,
needed in the proof of Theorem \ref{thm2},  reads as follows.

	\begin{theorem}\label{teo:compactness}
Let $p>1$, $u,u_k\in SBV^p(\Om;\mathbb S^1)$ 
be such that 
\begin{equation}\label{eq:dododo}
u_k\stackrel{*}{\rightharpoonup}u \quad
{\rm in~} BV(\Om;\mathbb S^1),
\end{equation}
and let $\varphi_k\in GSBV^p(\Om)$ be a 
lifting of $u_k$, for all $k\ge 1$. Suppose   
		$$\sup_{k\in \mathbb{N}}\mathcal H^{n-1}(S_{\varphi_k})<+\infty\,.$$
		Then
there exist a Caccioppoli
partition $(E_i)$ of $\Omega$
and a not-relabelled subsequence of indices $k$ 
for which the following holds:
\begin{itemize}
\item[(a)] there exists
 a lifting $\varphi_\infty\in GSBV^p(\Om)$ of $u$ in $\Om$,
\item[(b)] there exist a
sequence $(\dki)_{k\ge1}\subset \mathbb Z$  for 
any $i\in\mathbb{N}$, 
\end{itemize}
so that 
		\begin{equation}\label{main_equations}
\begin{split}
&\lim_{k \to +\infty}(\varphi_k(x)-2\pi \dki)= \varphi_\infty(x)\qquad \forall
i \in \NN,~ \text{ for a.e. }x\in E_{i}\,,\\
				&\lim_{k \to +\infty}|\varphi_k(x)-2\pi 
\dki
|= +\infty\qquad 
\forall i \in \NN, ~
\text{ for a.e. }x\in \Om\setminus E_{i}\,,\\
			\end{split}
		\end{equation}
and
		\begin{equation}\label{eq:liminf}
			\liminf_{k\rightarrow +\infty}
\Hausdorff(S_{\varphi_k})\ge \Hausdorff(S_{\varphi_\infty})\,.
		\end{equation} 
	\end{theorem}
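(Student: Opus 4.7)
My plan is to reduce Theorem \ref{teo:compactness} to the fixed-$u$ case of Theorem \ref{thm:compactness&lsc} by constructing an auxiliary sequence of $L^\infty$-bounded reference liftings $\psi_k$ of $u_k$ with uniformly bounded jump set. Then $\varphi_k - \psi_k = 2\pi m_k$ is $2\pi$ times a $\mathbb{Z}$-valued function, and the compactness problem for $(\varphi_k)$ reduces to one for the integer-valued sequence $(m_k)$, which is handled by iterating Theorem \ref{teo:vito}.

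\textbf{Step 1: constructing the reference liftings.} For each $a\in(0,2\pi)$ let $g_a\colon\R\to[a,a+2\pi)$ be the $2\pi$-periodic sawtooth function equal to the identity on $[a,a+2\pi)$, and set $\psi_k^a := g_a\circ\varphi_k$. This is a lifting of $u_k$ with values in $[a,a+2\pi)$, and its jump set satisfies
\begin{equation*}
S_{\psi_k^a}\subseteq S_{\varphi_k}\cup\bigcup_{m\in\mathbb{Z}}\{\varphi_k=a+2\pi m\}.
\end{equation*}
Applying the $GSBV$ coarea formula to the truncations of $\varphi_k$ and using Fubini,
\begin{equation*}
\int_0^{2\pi}\sum_{m\in\mathbb{Z}}\mathcal{H}^{n-1}\bigl(\{\varphi_k=a+2\pi m\}\setminus S_{\varphi_k}\bigr)\,da=\int_\Om|\nabla\varphi_k|\,dx=\int_\Om|\nabla u_k|\,dx\le|Du_k|(\Om)\le C,
\end{equation*}
so one can select $a_k\in(0,2\pi)$ for which the inner sum is bounded by $C/2\pi$. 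Setting $\psi_k:=\psi_k^{a_k}$ yields $\sup_k\bigl(\|\psi_k\|_{L^\infty}+|D\psi_k|(\Om)+\mathcal{H}^{n-1}(S_{\psi_k})\bigr)<+\infty$. Extracting a convergent subsequence $a_k\to a_\infty$ and applying standard $BV$ compactness, we find a subsequence with $\psi_k\to\psi$ in $L^1(\Om)$ and a.e.\ in $\Om$; combined with $u_k\to u$ a.e.\ this identifies $\psi\in BV(\Om;[a_\infty,a_\infty+2\pi])$ as a lifting of $u$.

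\textbf{Step 2: reduction to the integer-valued case.} The function $m_k:=(\varphi_k-\psi_k)/(2\pi)$ is $\mathbb{Z}$-valued a.e., hence $\nabla m_k=0$ a.e.\ and $S_{m_k}\subseteq S_{\varphi_k}\cup S_{\psi_k}$ has uniformly bounded $\mathcal{H}^{n-1}$-measure; therefore $(m_k)$ satisfies the hypotheses of Theorem \ref{teo:vito} (for any $p$). Iterating that theorem exactly as is presumably done in the proof of Theorem \ref{thm:compactness&lsc}—at each step extracting a set $E_i$ of finite perimeter on which a shifted subsequence $m_k-d_k^{(i)}$ converges a.e.\ to an integer-valued limit $m_\infty^{(i)}$ and diverges on the complement, then restarting on $\Om\setminus\bigcup_{j\le i}E_j$ and diagonalizing—produces a Caccioppoli partition $(E_i)$, integer shifts $d_k^{(i)}$, and integer-valued limits $m_\infty^{(i)}$ satisfying \eqref{main_equations}. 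Putting $\varphi_\infty:=\psi+2\pi m_\infty^{(i)}$ on each $E_i$ defines a measurable lifting of $u$; membership $\varphi_\infty\in GSBV^p(\Om)$ is automatic from the lifting identity $|\nabla\varphi_\infty|=|\nabla u|\in L^p$ and the bound $S_{\varphi_\infty}\subseteq S_\psi\cup\bigcup_i(S_{m_\infty^{(i)}}\cup\partial^*E_i)$.

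\textbf{Main obstacle.} The hardest part is \eqref{eq:liminf}. The natural route combines the l.s.c.\ assertion of Theorem \ref{teo:vito} applied at each iteration step (which bounds $\mathcal{H}^{n-1}(S_{m_\infty^{(i)}}\cup\partial^*E_i)$ by $\liminf_k\mathcal{H}^{n-1}(S_{m_k})$) with the inclusion $S_{m_k}\setminus S_{\psi_k}\subseteq S_{\varphi_k}$. However the extra jumps $S_{\psi_k}\setminus S_{\varphi_k}$, while uniformly bounded, are not a priori vanishing, so one has to show that they get absorbed into $S_\psi$ in the limit without overcounting against $S_{\varphi_\infty}$. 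This requires a delicate local analysis, exploiting that outside $S_{\varphi_k}$ the lifting $\varphi_k$ is locally continuous and that $\psi_k-\varphi_k$ is locally constant there, so that asymptotically the auxiliary level sets of $\psi_k$ merge with $S_\psi$ and can be matched against $S_{\varphi_\infty}$. In my view this delicate cancellation of auxiliary jumps is the principal technical difficulty of the proof.
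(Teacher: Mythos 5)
Your construction is a genuine alternative to the paper's, and the two approaches are worth comparing. The paper fixes a single bounded Dávila--Ignat lifting $\varphi$ of the \emph{limit} map $u$, sets $\psi_k := \varphi_k - \varphi$, and then normalizes by subtracting $2\pi\sum_{z}z\chi_{F_k^z}$, where $(F_k^z)_z$ is a coarea-selected Caccioppoli partition built from the level sets of $\psi_k$. The resulting $\widetilde\varphi_k := \varphi_k - \widetilde\psi_k$ are \emph{all liftings of the single $u$}, so Theorem \ref{thm:compactness&lsc} applies verbatim as a black box (and the $L^p$ bound $|\nabla\widetilde\varphi_k|=|\nabla u|$ needed there comes for free). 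You instead fix, for each $k$, a sawtooth lifting $\psi_k$ of $u_k$ with a coarea-optimized cut level $a_k$, and reduce to the integer-valued $m_k=(\varphi_k-\psi_k)/(2\pi)$, re-running the iterative extraction directly; your identification of the $L^1$-limit of $\psi_k$ via a.e.\ convergence of $u_k$ is actually a bit more direct than the paper's argument that $\widetilde\psi_k\to 0$. Up to and including \eqref{main_equations}, I believe your route is sound.

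The genuine gap is exactly where you flag it, but the difficulty you anticipate is a red herring: one should not try to cancel the spurious level-set jumps $S_{\psi_k}\setminus S_{\varphi_k}$ against $S_\psi$, and the paper does not do so. The correct move is to bypass the auxiliary sequence entirely in the lower semicontinuity step. Once \eqref{main_equations} is established you know that $\varphi_k - 2\pi d_k^{(i)}\to\varphi_\infty$ a.e.\ on $E_i$ and $|\varphi_k-2\pi d_k^{(i)}|\to+\infty$ a.e.\ on $\Omega\setminus E_i$ (in your notation $\varphi_k - 2\pi d_k^{(i)}=\psi_k+2\pi(m_k-d_k^{(i)})$, with $\psi_k$ uniformly bounded). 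Since subtracting a constant does not change the jump set, $S_{\varphi_k-2\pi d_k^{(i)}}=S_{\varphi_k}$, so applying Theorem \ref{teo:vito} directly to the sequence $(\varphi_k-2\pi d_k^{(i)})_k$ gives, for every $i$ and every open $A\subset\Omega$,
\begin{equation*}
\liminf_{k\to+\infty}\mathcal{H}^{n-1}\bigl(S_{\varphi_k}\cap A\bigr)\ge\mathcal{H}^{n-1}\bigl((S_{\varphi_\infty^{(i)}}\cup\partial^*E_i)\cap A\bigr),
\end{equation*}
where $\varphi_\infty^{(i)}:=\varphi_\infty\chi_{E_i}$. Feeding these localized inequalities into the Vitali-covering argument of Lemma \ref{lem:ricoprimenti} (with $\Phi_N$ built from the $\varphi_\infty^{(i)}$) and letting $N\to+\infty$ produces \eqref{eq:liminf} with no reference to $S_{m_k}$ or $S_{\psi_k}$ at all. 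In other words, the auxiliary liftings $\psi_k$ and the integer-valued functions $m_k$ serve only as scaffolding to produce the partition $(E_i)$ and the shifts $d_k^{(i)}$; once those exist, the liminf estimate is run on the original $\varphi_k$. With that replacement your proof closes.
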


\section{Proofs of Theorems \ref{thm:compactness&lsc} and \ref{teo:compactness}} \label{sec:proofs-compactness}
Next Lemmas \ref{lem:passo_base} and 
\ref{lem:ricoprimenti}, independent one each other, are the building blocks of an iterative argument needed for 
the proof of Theorem \ref{thm:compactness&lsc}. Some arguments
in the proof of Lemma \ref{lem:ricoprimenti} will be also used in 
the proof of Theorem \ref{thm2}.

\begin{lemma}[\textbf{Localized compactness}]\label{lem:passo_base} 
Let $p>1$, let $\Omega\subset\R^\dimension$ be a bounded open set with Lipschitz 
boundary, 
and $u\in SBV^p(\Om;\mathbb S^1)$. 
 Let $(\varphi_k)_{k \geq 1}
\subset GSBV^p(\Om)$ be 
a sequence of liftings of $u$, and suppose
\begin{align}\label{CI}
\constlemmabase
:=\sup_{k \geq 1}\Hausdorff(S_{\varphi_k})<+\infty.
\end{align}
Let
	$F\subset\Om$ be a nonempty finite perimeter set
in $\Om$. 
	Then, for  a not-relabelled subsequence, 
there exist a sequence of integers $(d_k)_{k\ge1}\subset \mathbb{Z}$, a finite perimeter set 
$$
E \subseteq F
$$
in $\Om$, 
 and a function $\varphi_\infty\in GSBV^p(\Om)$, 
such that:
\begin{equation}\label{eq:phi_infty_is_a_lifting}
 \varphi_\infty 
\text{ is a lifting of } u \text{ in } E\,, \quad
\varphi_\infty=0\quad\text{ in }
\Om\setminus E\,,
\end{equation}

	\begin{equation}\label{eq:lem5.9}
		\begin{split}
		&	\lim_{k\to+\infty}(\varphi_k(x)-2\pi d_k)= \varphi_\infty(x)\qquad \text{ for a.e. }x\in E\,,
\\
			&  \lim_{k\to+\infty}	|\varphi_k(x)-2\pi d_k|= +\infty\qquad \text{ for a.e. }x\in F\setminus E,
\\
		\end{split}
	\end{equation}
\begin{equation}\label{eq:boh}
\begin{aligned}
& |E|\geq \frac{n^n \omega_n|F|^\dimension}{{2^n}(2
\constlemmabase
+\Hausdorff(\partial^*F))^\dimension}>0, 
\\
& \Hausdorff(F\cap\partial^*E)=\Hausdorff(F\cap\partial^* (F\setminus E))\le 
\constlemmabase.
\end{aligned}
\end{equation}
If furthermore
\begin{equation}\label{eq:if_furthermore}
|\varphi_k(x)-2\pi d_k|\rightarrow +\infty \qquad {\rm for~ a.e.~}
x\in \Om\setminus E,
\end{equation}
then 
\begin{align}\label{eq:lem5.9:bis}
 \liminf_{k\rightarrow +\infty}\Hausdorff\big(
S_{\varphi_k}\cap \sottoaperto\big)\ge \Hausdorff\big((
S_{\varphi_\infty} \cup \partial^*E)\cap \sottoaperto\big)
\quad \text{ for any open set } \sottoaperto
\subseteq \Om.  
\end{align}
\end{lemma}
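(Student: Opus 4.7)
The plan is to select, for each $k$, an integer shift $d_k \in \mathbb Z$ so that the translated lifting $\psi_k := \varphi_k - 2\pi d_k$ (still a lifting of $u$, since $e^{i\psi_k}=u$) admits a substantial sub-region of $F$ on which it stays uniformly bounded, and then to apply the $GSBV^p$ compactness Theorem \ref{teo:vito} to the sequence $(\psi_k)$. For the choice of $d_k$, I partition $F$ by the $2\pi$-level sets
\[
L_k^m := \{x \in F : \varphi_k(x) \in [2\pi m - \pi,\; 2\pi m + \pi)\}, \qquad m \in \mathbb Z,
\]
choose $d_k$ to maximize $|L_k^m|$, and set $E_k := L_k^{d_k}$. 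The key geometric observation is that at any jump point $x \in S_{\varphi_k}$, only the two characteristic functions $\chi_{L_k^m}$ indexed by the levels of the traces $\varphi_k^\pm(x)$ can carry a non-trivial jump at $x$, each of absolute size $1$, regardless of the magnitude of $|\jump{\varphi_k}(x)|$. This caps the jump part of $\sum_m P_F(L_k^m)$ by $2\mathcal H^{n-1}(S_{\varphi_k}) \leq 2\constlemmabase$; a suitable shift of the grid by $\alpha \in [0,2\pi)$ absorbs the absolutely continuous crossings via coarea (using $\nabla u \in L^p \subset L^1(F)$), while the contribution coming from $\partial^* F$ sums to at most $\mathcal H^{n-1}(\partial^* F)$. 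Taken together these give $\sum_m P(L_k^m) \leq 2(2\constlemmabase + \mathcal H^{n-1}(\partial^* F))$, and combining this with the standard isoperimetric inequality and the elementary bound $|F| = \sum_m |L_k^m| \leq |E_k|^{1/n} \sum_m |L_k^m|^{(n-1)/n}$ (valid since $|L_k^m| \leq |E_k|$) yields the claimed lower bound on $|E_k|$. By construction $|\psi_k| \leq \pi$ on $E_k$ and $\mathcal H^{n-1}(F \cap \partial^* E_k) \leq \constlemmabase$.

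Since $\nabla \psi_k = \nabla u$ a.e.\ and $S_{\psi_k} = S_{\varphi_k}$, the sequence $(\psi_k) \subset GSBV^p(\Om)$ satisfies the hypotheses of Theorem \ref{teo:vito}: applying it, along a not-relabelled subsequence I obtain a finite perimeter set $E^* \subseteq \Om$ and $\varphi_\infty \in GSBV^p(\Om)$ with $\varphi_\infty = 0$ on $E^*$, $\psi_k \to \varphi_\infty$ a.e.\ on $\Om \setminus E^*$, $|\psi_k| \to +\infty$ a.e.\ on $E^*$, and $\mathcal H^{n-1}(S_{\varphi_\infty} \cup \partial^* E^*) \leq \liminf_k \mathcal H^{n-1}(S_{\varphi_k})$. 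I then set $E := F \setminus E^* \subseteq F$. The two convergence identities in \eqref{eq:lem5.9} are then immediate; continuity of $z \mapsto e^{iz}$ combined with $e^{i\psi_k} = u$ a.e.\ forces $e^{i\varphi_\infty} = u$ a.e.\ on $E$, so extending $\varphi_\infty$ by zero on $\Om \setminus E$ gives \eqref{eq:phi_infty_is_a_lifting}. Since $F \cap \partial^* E \subseteq \partial^* E^*$, the semicontinuity inequality above delivers $\mathcal H^{n-1}(F \cap \partial^* E) \leq \constlemmabase$.

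The measure lower bound on $E$ is transferred from Step~1 as follows: since $|\psi_k| \leq \pi$ on $E_k$ but $|\psi_k| \to +\infty$ a.e.\ on $E^*$, dominated convergence yields $|E_k \cap E^*| \to 0$; hence $|E| \geq \liminf_k |E_k \cap E| = \liminf_k |E_k|$, which is bounded from below by the isoperimetric estimate of Step~1. Finally, under the additional assumption \eqref{eq:if_furthermore} one has $E^* = \Om \setminus E$ up to Lebesgue null sets, and a standard localization of the Theorem \ref{teo:vito} lower semicontinuity inequality on an arbitrary open $A \subseteq \Om$ gives \eqref{eq:lem5.9:bis}. The main technical difficulty is the perimeter bound on $\sum_m P(L_k^m)$ in Step~1: the possibly unbounded integer jumps of $\varphi_k$ on $S^I_{\varphi_k}$ preclude any use of $|D\varphi_k|(F)$, forcing one to rely on the two-trace-level book-keeping described above for the jump part, and on an averaging argument over grid shifts for the absolutely continuous contribution.
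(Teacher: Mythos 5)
Your proposal is essentially correct and reaches the same conclusion, but it departs from the paper's proof in the construction of the Caccioppoli partition of $F$, and as a result it delivers a strictly weaker version of the measure lower bound in \eqref{eq:boh}.

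The paper's key device is to compare $\varphi_k$ to the \emph{fixed reference lifting} $\varphi_1$: the difference $v_k := \varphi_k - \varphi_1$ is piecewise constant with values in $2\pi\mathbb Z$ (since both are liftings of the same $u$, so $\nabla v_k = 0$ a.e.), and its level sets $V_k^m := \{v_k = 2\pi m\}$ form the partition of $F$, with $\partial^* V_k^m \subset S_{\varphi_k}\cup S_{\varphi_1}\cup\partial^*F$ and hence $\sum_m P(V_k^m)\le 2(2\constlemmabase+\Hausdorff(\partial^*F))$ with no absolutely continuous contribution at all. You instead slice $\varphi_k$ \emph{itself} along a translated $2\pi$-grid, which forces you into the two-trace book-keeping for the jump part (correct) \emph{and} an averaging argument over grid shifts $\alpha$ to absorb the level sets $\{\varphi_k = (2m\pm1)\pi+\alpha\}$ contributed by $\nabla\varphi_k$. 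That argument does work (since $|\nabla\varphi_k|=|\nabla u|\in L^1(F)$), but the resulting perimeter bound is $\sum_m P(L_k^m)\le 2\big(2\constlemmabase+\Hausdorff(\partial^*F)\big)+c\int_F|\nabla u|\,dx$ with an extra gradient term, so the isoperimetric step gives a lower bound on $|E|$ with a strictly larger denominator than the one stated in \eqref{eq:boh}. This weaker bound is still uniform (the extra term is $\le c\int_\Om|\nabla u|\,dx$) and would suffice for the iterative argument in the proof of Theorem~\ref{thm:compactness&lsc}, but as written your proposal does not prove the stated inequality in \eqref{eq:boh}. Two smaller remarks: once you introduce the shift $\alpha_k\in[0,2\pi)$, the bound $|\psi_k|\le\pi$ on $E_k$ should read $|\psi_k|\le 3\pi$ (or one has to center the grid at $\alpha_k$, at the cost of subtracting a non-integer multiple of $2\pi$, which \eqref{eq:lem5.9} does not allow); and for the identification $E=F\setminus E^*$ with $E^*$ coming from Theorem~\ref{teo:vito}, you do avoid the paper's truncation/diagonal step, which is a genuine streamlining, though the paper goes through truncations in order to control $\varphi_\infty$ pointwise on the set $\finperset$ before invoking Theorem~\ref{teo:vito} only for the lower semicontinuity inequality \eqref{eq:lem5.9:bis}. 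The overall architecture — shift by a dominant integer level, isoperimetric lower bound on the biggest cell, $GSBV^p$ compactness to identify the escape set, and localization of the lower semicontinuity — is the same.
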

\begin{proof}
Define\footnote{There is no special reason in the choice of $\varphi_1$; any 
element of the sequence $(\varphi_k)$ could be chosen as well.}
	$v_k:=\varphi_k-\varphi_1 \in GSBV^p(\Om)$ which, 
since $\varphi_k$ and $\varphi_1$ are liftings of $u$, is 
piecewise constant (i.e., the absolutely continuous 
and Cantor parts of $D v_k$ vanish)	
and takes values in $2\pi\mathbb Z$. Therefore, it induces a
Caccioppoli partition 
$\{V^k_m:m\in \mathbb Z\}$
of $F$, where 
$$
\text{for any } m\in \mathbb Z \qquad
V^k_m:=\{x\in F:v_k(x)=2\pi m\} \text{ has finite perimeter}, 
$$
$$
v_k = \sum_{m \in \mathbb Z} 2\pi m \chi_{V_m^k}, \qquad
\sum_{m\in\mathbb Z}|V^k_m|=|F|. 
$$
Moreover, since $\partial^* V_m^k \subset S_{\varphi_k} \cup S_{\varphi_1}\cup \partial^*F$,
the perimeter of the partition satisfies
	\begin{equation}\label{eq:partition}
		\begin{split}
\frac12\sum_{m\in \mathbb Z}P(V^k_m)&\leq \mathcal H^\codone(S_{\varphi_k}\cup S_{\varphi_1})+\mathcal H^\codone(\partial^*F)\\
			&\leq\mathcal H^\codone(S_{\varphi_k})+
\mathcal H^\codone( S_{\varphi_1})+\mathcal H^\codone(\partial^*F) 
\\
&\leq 
2\constlemmabase
+\mathcal H^\codone(\partial^*F)\,,
		\end{split}
	\end{equation} 
where $C$ is the constant in \eqref{CI}.
Now, for any $k \in \mathbb N$, $k \geq 1$,  we select
$d_k\in\mathbb{Z}$ for which 
 $$
|V^k_{d_k}| 
=
\max \left\{|V^k_m| : m \in \mathbb Z\right\}.
$$
In particular, from \eqref{eq:partition},
\begin{equation}\label{eq:equibounded_perimeters}
P(V^k_{d_k})\leq2(2\constlemmabase
+\mathcal H^\codone(\partial^*F)). 
\end{equation}
Using also the isoperimetric inequality, we find
\begin{equation*}	
\begin{aligned}
		|F|
=&
\sum_{m\in\mathbb Z}|V^k_m|\leq |V^k_{d_k}|^{\frac{1}{\dimension}}
\sum_{m\in\mathbb Z}|V^k_m|^{1-\frac{1}{\dimension}}
\leq 
\frac{|V^k_{d_k}|^{\frac{1}{\dimension}}}{\dimension\omega_n^{1/\dimension}}
\sum_{m\in\mathbb Z}
P(V^k_m)
\\
\leq &
\frac{|V^k_{d_k}|^{1/\dimension}}{n\omega_n^{1/n}} 
2(2\constlemmabase
+\mathcal H^\codone(\partial^*F)).
	\end{aligned}
\end{equation*}
Thus passing to the limit
\begin{equation}\label{eq:equibounded_volumes}
\liminf_{k\rightarrow +\infty}|V^k_{d_k}|\geq \frac{\dimension^\dimension \omega_\dimension|F|^\dimension}{2^n(2\constlemmabase
+\mathcal H^{n-1}(\partial^*F))^\dimension}.
\end{equation}
 
Hence, from \eqref{eq:equibounded_perimeters} and
\eqref{eq:equibounded_volumes},
 there are a not-relabelled subsequence 
and a finite perimeter set 
$$
\finperset
\subset F
$$
 such that 
	\begin{equation}\label{eq:conv_characteristic_functions}
		(\chi^{}_{V^k_{d_k}})
{\rm ~converges~to}~ \chi_{\finperset} \ \text{ a.e. in } F \text{ and weakly star in }BV(\Omega;\{0,1\}) \text{ as } k \to +\infty.
	\end{equation}
	Consequently, 
\begin{equation}\label{eq:stima_volume_E1_dal_basso}
|\finperset|\geq \frac{\dimension^\dimension \omega_\dimension|F|^\dimension}{2^n(2\constlemmabase
+\mathcal H^\codone(\partial^* F))^\dimension}.
\end{equation}
Next, we  define
	$$
\widehat \varphi_k:=\varphi_k-2\pi d_k.
$$
 Notice that $\mathcal{H}^\codone(S_{\widehat{\varphi}_k})
\le \mathcal{H}^\codone(S_{{\varphi}_k})\le \constlemmabase$;
	moreover 
	by construction
	$$\widehat \varphi_k=\varphi_1\text{ on }V^k_{d_k}
\qquad \forall
k \in \mathbb N,
$$
	and, from 
\eqref{eq:conv_characteristic_functions}, 
	\begin{align*}
		\widehat \varphi_k\rightarrow \varphi_1\qquad \text{pointwise a.e. in }\finperset\,.
	\end{align*}
However, there could be other regions of $F$ out of $\finperset$,
where the 
sequence $(\widehat \varphi_k)$ 
converges pointwise,
that we need to identify. To this aim,  
let us consider, for all integers $N\ge1$, the sequence 
of truncated functions $
(\widehat \varphi_k\wedge N)\vee (-N)
$, which 
is precompact in $SBV^p(\Omega)$.
 Using a diagonal argument, we 
 select a further (not-relabelled) subsequence such that $(\widehat \varphi_k\wedge N)\vee (-N)$ 
converge in $L^1(\Om)$ and pointwise a.e. in $F$ for all $N \in \mathbb N$.
Define 
$$
F_N := \left\{
x \in F : \lim_{k\rightarrow \infty}\vert (\widehat \varphi_k(x)\wedge N)\vee (-N)
\vert = N
\right\}.
$$ 
Then $F_N \supset F_{N+1}$, and 
$$
\bigcap_{N \in \mathbb N} 
F_N 
:= \{x \in F : \vert \widehat \varphi_k(x)
\vert \to +\infty \}.
$$
	 As a consequence, setting 
\begin{equation}\label{eq:E_contiene_E1}
E := F \setminus \bigcap_{N} 
F_N,
\end{equation}
we have 
\begin{equation}\label{eq:catena_di_inclusioni}
\finperset \subseteq E \subseteq F,
\end{equation}
$E$ has finite perimeter in $\Om$\footnote{$E$  has finite perimeter in $F$, 
and $F$ has finite perimeter in $\Om$, therefore $E$ has finite perimeter in 
$\Om$.}, 
and 
setting  
$$
 \varphi_\infty(x)=
\begin{cases}
\lim_k\widehat{\varphi}_k(x) & {\rm  in}~  E
\\
0 & {\rm in}~ \Omega\setminus E
\end{cases}\quad\text{for a. e. }x\in\Omega,
$$
we have $ \varphi_\infty\in GSBV^p(\Omega)$,
 $\varphi_\infty=\varphi_1$ a.e. in $G$,
and $\varphi_\infty$ is a lifting of $u$ in $E$. 
	At the same time 
	$$|\widehat \varphi_k(x)|\rightarrow +\infty\qquad \text{for a.e. }x\in F\setminus E.$$ 
Thus, using also 
\eqref{eq:stima_volume_E1_dal_basso} and \eqref{eq:catena_di_inclusioni}, 
which imply
$|E|\geq |\finperset|\geq  \frac{\dimension^\dimension \omega_\dimension|F|^\dimension}{2^n(2\constlemmabase
+\Hausdorff(\partial^*F))^\dimension}$,
statements \eqref{eq:phi_infty_is_a_lifting}-\eqref{eq:boh} are proven.

It remains to prove \eqref{eq:lem5.9:bis}. Assume  $|\widehat{\varphi}_k|\to+\infty$ a.e. in $\Omega\setminus E$. Then, 
by Theorem \ref{teo:vito}, for any subsequence $(\widehat{\varphi}_{k_h})$ we can extract a further subsequence $(\widehat{\varphi}_{k_{h_j}})$ such that 
	\begin{align*}
	\Hausdorff((
	S_{ \varphi_\infty} \cup 
	\partial^*E )\cap \sottoaperto) 
	\leq 
	\liminf_{j\rightarrow +\infty}
	\Hausdorff(S_{\varphi_{k_{h_j}}}\cap \sottoaperto)
	\text{ for any open set  } \sottoaperto\subseteq \Om.
\end{align*}
Hence, the same holds for the original sequence $(\widehat\varphi_k)$,
	\begin{align*}
\Hausdorff((
S_{ \varphi_\infty} \cup 
\partial^*E )\cap \sottoaperto) 
\leq 
		\liminf_{k\rightarrow +\infty}
\Hausdorff(S_{\varphi_k}\cap \sottoaperto)
\text{ for any open set  } \sottoaperto\subseteq \Om,
	\end{align*}
which shows  \eqref{eq:lem5.9:bis}.
This additionally implies 
$$
P(E,F)\leq \constlemmabase= \sup_{k \in \NN}
\Hausdorff(S_{\varphi_k}),
$$
which shows the last equality in \eqref{eq:boh}.
\end{proof}

		\begin{lemma}\label{lem:ricoprimenti}
			Let  $p>1$ and $(\varphi_k)_{k\ge1}\subset 
GSBV^p(\Om)$  be a sequence of functions with 
\begin{equation}\label{eq:costante_ricoprimenti}
\constlemmasuccessivo
:=\sup_{k \in \NN}\Hausdorff(S_{\varphi_k})<+\infty.
\end{equation}
Let $N\ge1$ be an integer, $E_1,\dots,E_N\subset \Om$ be 
pairwise disjoint nonempty finite perimeter sets and $\varphi_\infty^1,\dots,
\varphi_\infty^N$ be functions in $GSBV^p(\Om)$ with 
the following properties: for any $i=1,\dots,N$, 
			\begin{equation}\label{eq:52}
				\begin{split}
					&\varphi_\infty^{(i)}=0\qquad 
\text{ a.e. in }\Om\setminus E_i\,,\\[1em]
					&  
\liminf_{k\rightarrow +\infty}\Hausdorff(S_{\varphi_k}\cap B)\ge \Hausdorff\Big((
S_{\varphi_\infty^{\pip}}\cup 
\partial^*E_i)
\cap B\Big)
  \text{ for any open ball } B\subset\Omega.
				\end{split}
			\end{equation}
Define 
$\Phi_N\in GSBV^p(\Om)$ as
			\begin{align*}
\Phi_N(x)
:=\begin{cases}
\varphi_\infty^{\pip}(x)&\text{if }x\in E_i\text{ for some }i=1,\dots,N\,,\\[1em]
					0&\text{if }x\in \Omega\setminus 
\big(\cup_{i=1}^NE_i\big).
				\end{cases}
			\end{align*}
			Then
\begin{equation}\label{70thesis}
 \liminf_{k\rightarrow +\infty}\Hausdorff(S_{\varphi_k})\ge   \Hausdorff\Big(S_{\Phi_N}\cup\big(\Om \cap 
 \partial^*\big(\cup_{i=1}^N E_i\big)\big)\Big),
\end{equation}
and
\begin{equation}\label{eq:bound_on_reduced_boundaries}
				\Hausdorff\Big(
\Om\cap 
\partial^* \big(
\cup_{i=1}^N E_i
\big)\Big)=\Hausdorff\Big(
\Om\cap
\partial^* \big(\Omega\setminus \cup_{i=1}^N E_i
\big)\Big)\leq \constlemmasuccessivo.
\end{equation}
			
		\end{lemma}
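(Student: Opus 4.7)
I begin by setting $A_i := S_{\varphi_\infty^{(i)}} \cup \partial^* E_i$ for $i=1,\dots,N$, and let $T := S_{\Phi_N} \cup \big(\Omega \cap \partial^*(\bigcup_{i=1}^N E_i)\big)$ denote the target set appearing in \eqref{70thesis}. The preliminary observation is the set-theoretic inclusion $T \subseteq \bigcup_{i=1}^N A_i$ (up to an $\Hausdorff$-negligible set): indeed, at any point in the measure-theoretic interior of some $E_i$ we have $\Phi_N = \varphi_\infty^{(i)}$ locally, so $S_{\Phi_N}$ at such points is contained in $S_{\varphi_\infty^{(i)}} \subseteq A_i$; a point of $S_{\Phi_N}$ not of this type must lie on some $\partial^* E_i$; and $\partial^*(\bigcup_i E_i) \subseteq \bigcup_i \partial^* E_i$. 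Since $\Hausdorff(A_i)<+\infty$ for every $i$, $T$ is countably $(n-1)$-rectifiable with $\Hausdorff(T) < +\infty$.

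The key estimate is the following superadditivity bound: for every finite family of pairwise disjoint open balls $B_1,\dots,B_M \subset \Omega$ and every choice of indices $i_1,\dots,i_M \in \{1,\dots,N\}$,
\[
\liminf_{k\to+\infty} \Hausdorff(S_{\varphi_k}) \;\geq\; \sum_{j=1}^M \Hausdorff(A_{i_j} \cap B_j).
\]
This is obtained by summing $\Hausdorff(S_{\varphi_k}) \geq \sum_{j=1}^M \Hausdorff(S_{\varphi_k} \cap B_j)$ (allowed by disjointness), using superadditivity of $\liminf$ for nonnegative sequences, and applying the hypothesis \eqref{eq:52} to each $B_j$ with the chosen index $i_j$.

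The core of the argument is then a Vitali-type covering of $T$. Since $T$ is $(n-1)$-rectifiable of finite measure, at $\Hausdorff$-a.e.\ $x \in T$ the $(n-1)$-density of $T$ at $x$ equals $1$. At each such point I choose $i(x) \in \{1,\dots,N\}$ with $x \in A_{i(x)}$ (possible by $T \subseteq \bigcup_i A_i$); by rectifiability of each $A_i$, after discarding an $\Hausdorff$-null subset we may also ensure $A_{i(x)}$ has density $1$ at $x$. Fix $\eta > 0$. For each such $x$ there exists $\rho(x) > 0$ with $\overline{B_{\rho(x)}(x)} \subset \Omega$ such that for all $0 < \rho < \rho(x)$,
\[
\Hausdorff(A_{i(x)} \cap B_\rho(x)) \geq (1-\eta)\omega_{n-1}\rho^{n-1}, \qquad \Hausdorff(T \cap B_\rho(x)) \leq (1+\eta)\omega_{n-1}\rho^{n-1}.
\]
The family $\{B_\rho(x) : x \in T,\, 0 < \rho < \rho(x)\}$ is a fine cover of a full $\Hausdorff$-measure subset of $T$, so the Vitali--Besicovitch covering theorem applied to the Radon measure $\Hausdorff \res T$ yields a countable pairwise disjoint subfamily $(B_{\rho_j}(x_j))_{j \in \mathbb{N}}$ with $\Hausdorff\big(T \setminus \bigcup_j B_{\rho_j}(x_j)\big) = 0$. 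Truncating to $j=1,\dots,M$ and invoking the superadditivity bound,
\[
\liminf_{k\to+\infty} \Hausdorff(S_{\varphi_k}) \;\geq\; \sum_{j=1}^M \Hausdorff(A_{i(x_j)} \cap B_{\rho_j}(x_j)) \;\geq\; \frac{1-\eta}{1+\eta} \sum_{j=1}^M \Hausdorff(T \cap B_{\rho_j}(x_j)).
\]
Letting first $M \to +\infty$ and then $\eta \to 0^+$ yields \eqref{70thesis}. The companion bound \eqref{eq:bound_on_reduced_boundaries} follows at once: the inclusion $\Omega \cap \partial^*(\bigcup_i E_i) \subseteq T$ gives $\Hausdorff(\Omega \cap \partial^*(\bigcup_i E_i)) \leq \Hausdorff(T) \leq \liminf_k \Hausdorff(S_{\varphi_k}) \leq \constlemmasuccessivo$ by \eqref{eq:costante_ricoprimenti}, while the equality $\partial^*(\bigcup_i E_i) = \partial^*(\Omega\setminus \bigcup_i E_i)$ is the classical fact that a finite-perimeter set and its complement share the same reduced boundary.

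The main delicacy lies in the index selection $x \mapsto i(x)$ and in verifying that the density bounds for $A_{i(x)}$ (from below) and $T$ (from above) can be made compatible at the same scale, in particular at points where several $\partial^* E_j$ meet; this is precisely what the combination of rectifiability and the case analysis above delivers, rendering the Vitali--Besicovitch step essentially automatic.
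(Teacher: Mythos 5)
Your proof is correct. It reaches the same conclusion by a Vitali-covering-plus-density argument, but with a noticeably leaner bookkeeping than the paper's. The paper first splits the target set $S\cup\Sigma$ into the $2N$ pieces $S_i := S_{\varphi_\infty^{(i)}}\setminus\partial^*E_i$ and $\Sigma_i := \Om\cap\partial^*E_i$, then for each piece picks balls in which \emph{that} piece has nearly full density while \emph{all the other $2N-1$ pieces} are sparse (the second conditions in \eqref{63bis}, \eqref{72bis}), and finally controls the resulting cross-contamination via the double-sum estimates \eqref{key66bis}--\eqref{key67bis}. You instead keep the target set $T$ intact, at $\Hausdorff$-a.e.\ $x\in T$ choose one index $i(x)$ with $x\in A_{i(x)}$ a density-one point, and compare the lower density bound for $A_{i(x)}$ against the upper density bound for $T$ at the same center, which immediately gives the ratio $\tfrac{1-\eta}{1+\eta}$ per ball; summing and letting $M\to\infty$, $\eta\to 0$ then does the work of the paper's final double limit without any cross-contamination accounting. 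Two remarks on the comparison: (i) the paper in fact proves the a priori stronger inequality $\liminf_k\Hausdorff(S_{\varphi_k})\ge\Hausdorff(S\cup\Sigma)$ with $S\cup\Sigma\supseteq T$, and deduces \eqref{70thesis} as a corollary; your argument gives exactly $\Hausdorff(T)$ as written, though it would yield $\Hausdorff(\bigcup_i A_i)=\Hausdorff(S\cup\Sigma)$ verbatim by replacing $T$ with $\bigcup_i A_i$, so the two are ultimately equivalent in strength; (ii) your derivation of \eqref{eq:bound_on_reduced_boundaries} from \eqref{70thesis} and the reduced-boundary-of-complement identity matches the paper's.
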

		
	\begin{proof}
The equality in \eqref{eq:bound_on_reduced_boundaries} follows 
from $\Omega\cap\partial^*E=\Omega\cap\partial^*
(\Omega\setminus E)$, and the 
inequality is a consequence of \eqref{70thesis} and \eqref{eq:costante_ricoprimenti}.
So, let us prove \eqref{70thesis}.
	
	To shortcut the notation we set, for all $i=1,\dots,N$,
		\begin{align*}
			\Sigma_i:=
\Om \cap \partial^*E_i, 
\qquad  S_i:=S_{\varphi^{\pip}_\infty}\setminus \partial^*E_i\,, 
		\end{align*}
		and 
		\begin{equation*}
			\Sigma:=\bigcup_{i=1}^N \Sigma_i\,,
			\qquad S:=\bigcup_{i=1}^N S_i,
		\end{equation*}
are $(\onecod)$-rectifiable with 
$\Hausdorff
(\Sigma)<+\infty$, 
$\Hausdorff(S)<+\infty$.
		We fix $\delta \in (0,1)$ and, for any $i=1,\dots,N$, 
for $\Hausdorff$-a.e. $x\in S_i$ we choose a radius $r(x)$ such that
		\begin{equation}\label{63bis}
			\begin{aligned}
& B_\rho(x)\subseteq\Om\,,
\\
& \Hausdorff(B_\rho(x)\cap S_i)\geq (1-\delta)\omega_{n-1}\rho^\codone,
\\
& \Hausdorff(B_\rho(x)\cap \Sigma)
+
\sum_{\substack{m\neq i\\m=1}}^N\Hausdorff(B_\rho(x)\cap S_m)
\leq \delta \omega_{n-1}\rho^\codone
			\end{aligned}
\qquad \qquad 
\forall \rho\in (0, r(x)).
		\end{equation}
We collect all such balls $B_\rho(x)$ satisfying \eqref{63bis} in a family 
denoted $\mathcal B_i$.
		Furthermore, possibly
reducing the value of $r(x)$, 
for any $i=1,\dots,N$ and for $\Hausdorff$-a.e. $x\in \Sigma_i$ we may suppose that 
		\begin{equation}\label{72bis}
			\begin{split}
&B_\rho(x)\subseteq\Om\,,\\ & \Hausdorff(B_\rho(x)\cap \Sigma_i)\geq (1-\delta)\omega_{n-1}\rho^\codone,
\\
& \Hausdorff(B_\rho(x)\cap S)
+
\sum_{\substack{m\neq i\\m=1}}^N\Hausdorff(B_\rho(x)\cap \Sigma_m)
\leq \delta\omega_{n-1} \rho^\codone,
			\end{split}
\qquad \qquad
\rho\in
(0, r(x)),
\end{equation} 
		and we collect such balls in a 
family $\mathcal B_{i+N}$.
		The family 
$\cup_{n=1}^{2N}\mathcal B_n$, forms a Vitali covering of $\Sigma\cup S$, and so by Vitali covering theorem we can 
choose countable many points $x_k \in \Om$ and radii $\rho_k\in (0, r(x_k))$ such that the 
family 
$$
\mathcal B:=\left\{B\in \bigcup_{\indice
=1}^{2N}\mathcal B_\indice:B=B_{\rho_k}(x_k) \text{ for some }k\in \mathbb N
\right\}
$$
 consists of mutually disjoint balls and covers 
 $\Sigma\cup S$
up to a $\Hausdorff$-negligible set, 
with 
$$
\sum_{k=1}^{+\infty}\rho_k^\codone < +\infty.
$$

In addition
\begin{align}\label{eq:73}
			&\Hausdorff(S\cup \Sigma)=
\sum_{B\in \mathcal B}\Hausdorff(B\cap (S\cup \Sigma))\geq (1-\delta)\omega_{n-1}
\sum_{k=1}^{+\infty}\rho_k^\codone,
		\end{align}
		the inequality following from \eqref{63bis} and \eqref{72bis}. 
		From the same 
formulas, for any $n=1,\dots,N$, 
		and  $B=B_\rho(x)\in\mathcal{B}_i$, $i\ne 
\indice$, it holds
		\begin{align*}
			\Hausdorff(B\cap S_\indice
)\le \sum_{\substack{m\ne i\\m=1}}^{N}
			\Hausdorff(B\cap S_m)\le \delta\omega_{n-1} \rho^\codone.
		\end{align*}
		This, together with \eqref{eq:73}, imply
		\begin{equation}\label{eq:stella}
			\sum_{B\in \mathcal B\setminus\mathcal B_\indice }
\Hausdorff(B\cap S_\indice)\leq \delta\omega_{n-1}\sum_{k=1}^{+\infty}
{\rho_k^\codone}\leq 
\frac{\delta}{1-\delta}\Hausdorff(S\cup \Sigma).
		\end{equation}
		Similarly, for all $h=1,\dots,N$,
		\begin{equation}\label{eq:pila}
			\sum_{B\in \mathcal B\setminus\mathcal B_{N+h} }
\Hausdorff
(B\cap \Sigma_h)\leq \frac{\delta}
{1-\delta}\Hausdorff(S\cup \Sigma).
		\end{equation}
		From \eqref{eq:stella} and \eqref{eq:pila}
it follows 
		\begin{align}\label{key66bis}
			\sum_{B\in \mathcal B\cap \mathcal B_\indice }
\Hausdorff
(B\cap S_\indice)&=	\sum_{B\in \mathcal B}\Hausdorff
(B\cap S_\indice)-
			\sum_{B\in \mathcal B\setminus \mathcal B_\indice }
\Hausdorff
(B\cap S_\indice)\nonumber\\
			&\geq \Hausdorff
(S_\indice)- \frac{\delta \Hausdorff
(S\cup \Sigma)}{1-\delta}\,,
		\end{align}
		for all $\indice=1,\dots,N$, and 
		analogously for all $h=1,\dots,N$,
		\begin{align}\label{key67bis}
			\sum_{B\in \mathcal B\cap \mathcal B_{N+h} }
\Hausdorff
(B\cap \Sigma_h)\geq 
\Hausdorff
(\Sigma_h)- \frac{\delta \Hausdorff
(S\cup \Sigma)}{1-\delta}.
		\end{align}
		From \eqref{eq:52}, for any $\indice=1,\dots,N$ we obtain
		\begin{align}\label{S_nbis}
			\liminf_{k\rightarrow +\infty}
\Hausdorff
(S_{\varphi_k}\cap B)\ge \Hausdorff
(S_{\varphi^\indice_\infty}\cap B)\ge	
\Hausdorff
(S_\indice\cap B)\,,
		\end{align}
		and for all $h=1,\dots,N$
		\begin{align}\label{Sigma_nbis}
			\liminf_{k\rightarrow +\infty}
\Hausdorff
(S_{\varphi_k}\cap B)\ge 
			\Hausdorff
(B \cap \partial^*E_h)\ge 	\Hausdorff
(\Sigma_h\cap B) \,.
		\end{align}
		Now, summing \eqref{S_nbis} over all $B\in \mathcal B\cap \mathcal B_\indice$ and \eqref{Sigma_nbis} over all $B\in \mathcal B\cap \mathcal B_{N+h}$, and then over $n,h$ respectively, we infer
\begin{equation}\label{eq:double_sum}
\begin{aligned}
& \sum_{\indice=1}^N\sum_{B\in \mathcal B\cap \mathcal B_\indice }
\Hausdorff
(S_\indice\cap B)+\sum_{h=1}^N\sum_{B\in \mathcal B\cap \mathcal B_{N+h} }
\Hausdorff
(\Sigma_h\cap B)
\\
\leq & \liminf_{k\to +\infty} \sum_{\indice=1}^{2N}\sum_{B\in \mathcal B\cap \mathcal B_\indice }\Hausdorff
(S_{\varphi_k}\cap B)\leq \liminf_{k\rightarrow +\infty}
\Hausdorff
(S_{\varphi_k}),
\end{aligned}		
\end{equation}
		where the penultimate inequality is a 
consequence of Fatou's Lemma.
Combining \eqref{eq:double_sum} with \eqref{key66bis} and \eqref{key67bis} we get
		\begin{align*}
			\sum_{\indice=1}^N
\Hausdorff
(S_\indice)+\sum_{h=1}^N
\Hausdorff
(\Sigma_h) - \frac{2\delta N 
\Hausdorff
(S\cup \Sigma)}{1-\delta}\leq \liminf_{k\rightarrow +\infty}
\Hausdorff
(S_{\varphi_k})\,.
		\end{align*}
		By the arbitrariness of $\delta>0$ we conclude
		$$\Hausdorff
(S\cup\Sigma)\leq \liminf_{k\rightarrow +\infty}\Hausdorff
(S_{\varphi_k})\,,$$ that implies \eqref{70thesis}.
\end{proof}

\subsection{Proof of Theorem \ref{thm:compactness&lsc}}
		
Let $M :=\sup_{k\in\mathbb N} \Hausdorff(S_{\varphi_k}) < +\infty$.
To show compactness of 
$(\varphi_k)$, we utilize an iterative argument.  
\\
\noindent \textbf{Base case $N=1$.}
We set 
\begin{equation}\label{eq:F_1}
F_1:=\Omega.
\end{equation}
 From Lemma \ref{lem:passo_base} we 
find, for  a not-relabelled subsequence, 
a finite perimeter set $E_1\subseteq F_1$, 
a sequence $(d^{(1)}_k)_{k\ge1}\subset\mathbb Z$  and a function 
$\varphi^{(1)}_\infty\in GSBV^p(\Om)$,
 such that 
\begin{equation*}
	\begin{aligned}
&\varphi^{(1)}_\infty \text{ 
is a lifting of } u \text{ in } E_1,
\qquad	\varphi^{(1)}_\infty=0\quad\text{ in }F_1\setminus E_1\,,\quad 
\\
&			|E_1|
\geq 
\frac{\dimension^\dimension \omega_\dimension
|F_1|^\dimension}{2^n(2M+\Hausdorff(\partial\Om))^\dimension}\,,
\end{aligned}
\end{equation*} 
and
\begin{equation}
\begin{aligned}
& (\varphi_k(x)-2\pi d^{(1)}_k)\rightarrow \varphi^{(1)}_\infty(x)
\qquad \text{ for a.e. }x\in E_1\,,\\
&  	|\varphi_k(x)-2\pi d^{(1)}_k|\rightarrow +\infty\qquad 
\text{ for a.e. }x\in F_1\setminus E_1.
\end{aligned}
\end{equation}
Moreover, since $F_1 = \Om$, from the final part of
the statement of Lemma \ref{lem:passo_base}, we have 
\begin{equation}\label{eq:final_part}
\liminf_{k\rightarrow +\infty}\Hausdorff
(S_{\varphi_k}\cap \sottoaperto)\geq \Hausdorff((
S_{\varphi^{(1)}_\infty} \cup 
\partial^*E_1
)\cap \sottoaperto)
\text{ for any open set } \sottoaperto\subseteq F_1,
		\end{equation}
see \eqref{eq:lem5.9:bis}. 
		
\noindent \textbf{Iterative case $N\leadsto N+1$.}  
		Let $N\ge2$ be an integer,
$E_1,\dots,E_{N}\subset\Omega$ be 
pairwise
disjoint nonempty finite perimeter sets,
and define, together with \eqref{eq:F_1},
$$
F_i:=\Omega\setminus \bigcup_{j=1}^{i-1}E_j \ \ \text{ for } i=2,\dots,N,
$$
so that 
$$
E_1 \subseteq F_1, \ E_2 \subseteq F_2, \dots,
\  E_N \subseteq F_N.
$$
Suppose
that:
\begin{itemize}
\item[(i)] For all $i=1,\dots,N$,  
there exists a function $\varphi_\infty^{{(i)}}\in GSBV^p(\Omega)$ 
which is 
a lifting of $u$ in  $E_i$, and 
$\varphi_\infty^{{(i)}}=0$  in $\Om\setminus E_i$;
\item[(ii)] For all $i=1,\dots,N$ we have 
\begin{equation*}
|E_i|\geq \frac{
\dimension^\dimension \omega_\dimension
|F_i|^\dimension}{2^n(2M+\Hausdorff(\partial^*F_i))^\dimension}\geq\frac{
\dimension^\dimension \omega_\dimension
|F_i|^\dimension}{2^n(3M+\Hausdorff(\partial\Om))^\dimension};
\end{equation*} 
\item[(iii)]  
For all $i=1,\dots,N$, there exist 
 sequences
$(d_k^{(i)})_{k}\subset \mathbb{Z}$, 
such that 
\begin{equation}
\begin{split}\label{eq:conv-m-insiemi}
&(\varphi_k(x)-2\pi \dki
)\rightarrow \varphi^{(i)}_\infty(x)\qquad \text{ for a.e. }x\in E_i\,,\\
					&|\varphi_k(x)-2\pi 
\dki
|\rightarrow +\infty\qquad \text{ for a.e. }x\in \Om\setminus E_i\,,\\
					& \liminf_{k\rightarrow +\infty}
\Hausdorff(S_{\varphi_k}\cap \sottoaperto)
\geq 
\Hausdorff((S_{\varphi_\infty^{(i)}} \cup \partial^*E_i)\cap \sottoaperto)
\text{ for any open set } \sottoaperto\subseteq\Omega.
				\end{split}
\end{equation}
\end{itemize}
		
We now want to 
find a finite perimeter set $E_{N+1} \subseteq \Om$ 
disjoint from $\cup_{i=1}^N E_i$  
and a function $\varphi_\infty^{(N+1)} \in GSBV^p(\Om)$, such that, for a not-relabelled subsequence,  
$E_1,\dots, E_{N+1}$, $(\varphi_k)$, and $\varphi_\infty^{(N+1)}$ 
satisfy properties (i)-(iii) above,  
with $N$  replaced by $N+1$.
		
To this purpose we set, for $N\ge1$,
$$
F_{N+1}:=\Om\setminus
\Big(\bigcup_{i=1}^{N}E_i\Big), 
$$
and 
{we let $\Phi_N\in GSBV^p(\Om)$ be defined as
			\begin{equation}\label{def:phi-barrato-m}
\Phi_N(x):=\begin{cases}
					\varphi_\infty^{\pip}(x)&\text{if }x\in E_i\text{ for }i=1,\dots,N\,,\\
					0&\text{if }x\in 
\Omega.
				\end{cases}
			\end{equation}
If $F_{N+1}=\emptyset$ there is nothing to prove. Assume then $F_{N+1}\not=\emptyset$.
 From Lemma \ref{lem:ricoprimenti} we have
			\begin{equation}\label{eq:liminf-n-insiemi}
\liminf_{k\rightarrow +\infty}\Hausdorff
(S_{\varphi_k})\geq 
\Hausdorff ( S_{\Phi_N}\cup(\Omega\cap\partial^*(\cup_{i=1}^NE_i)))\,,
			\end{equation}
			and 
			\begin{equation}\label{eq:controllo-perimetro-Fm+1}
\Hausdorff\Big(\Omega\cap\partial^*(\cup_{i=1}^NE_i)\Big)= 
\Hausdorff(\Omega \cap\partial^* F_{N+1})\le M\,.
			\end{equation}
		}
		Next, 
applying {Lemma \ref{lem:passo_base}} to $F=F_{N+1}$, 
there exist a sequence $(d_k^{\pNpop})_{k\ge1} \subset \mathbb Z$, a 
finite perimeter set $E_{N+1}\subseteq F_{N+1}$ (and thus
$E_{N+1} \cap (\cap_{i=1}^N E_i) = \emptyset$) and a function 
$\varphi_\infty^{\pNpop}\in GSBV^p(\Omega)$ such that  \begin{equation*}
			\varphi^{\pNpop}_\infty=0\quad\text{ in }\Omega\setminus E_{N+1}\,,\quad \varphi_\infty^{\pNpop}\text{ is a lifting of } u \text{ in } E_{N+1}\,,
		\end{equation*}
		\begin{equation}\label{eq:controllo-area-Fm+1}
			|E_{N+1}|\geq \frac{\dimension^\dimension
\omega_\dimension|F_{N+1}|^2}{2^n(2M+\Hausdorff(\partial^*F_{N+1}))^2}\,,
		\end{equation}
		and 
		\begin{equation}\label{eq:lem5.9BIS}
			\begin{split}
				&(\varphi_k(x)-2\pi d_k^{\pNpop})\rightarrow \varphi^{\pNpop}_\infty(x)\qquad \text{ for a.e. }x\in E_{N+1}\,,\\
				&  	|\varphi_k(x)-2\pi d^{\pNpop}_k|\rightarrow +\infty\qquad \text{ for a.e. }x\in F_{N+1}\setminus E_{N+1}\,,\\
				&\liminf_{k\rightarrow +\infty}
\mathcal H^1(S_{\varphi_k}\cap \sottoaperto)
\geq \mathcal H^1((
S_{\varphi_\infty^{\pNpop}}
\cup
\partial^*E_{N+1} 
)\cap \sottoaperto)
\text{ for any open set } \sottoaperto
\subseteq F_{N+1}. 
			\end{split}
		\end{equation}
{Combining \eqref{eq:controllo-perimetro-Fm+1} and \eqref{eq:controllo-area-Fm+1} we readily get
			\begin{equation*}
				|E_{N+1}|\geq \frac{\dimension^\dimension
\omega_\dimension|F_{N+1}|^2}{2^n(2M+\Hausdorff(\partial^*F_{N+1}))^2}\geq\frac{\dimension^\dimension \omega_\dimension|F_{N+1}|^2}{2^n(3M+\Hausdorff(\partial\Om))^2} >0\,.
			\end{equation*}
		}
		Moreover by \eqref{eq:conv-m-insiemi}, also
		\begin{align}
			|\varphi_k(x)-2\pi d^{\pNpop}_k|\rightarrow +\infty\qquad \text{ for a.e. }x\in \Om\setminus E_{N+1}\,.
		\end{align}
		Thus 
the sets $E_1,\dots, E_{N+1}$ satisfy 
(i)-(iii) with $(d_k^{\pip})_{k\ge1}\subset\mathbb{Z}$ and $\varphi_\infty^{\pip} \in GSBV^p(\Omega)$ for $i\in\{1,\dots,N+1\}$.\\
		
		\noindent \textbf{Conclusion.} 
We now combine the base case and the iterative case to conclude the proof.  	
		{First we note that each time we apply the iterative case we have to extract a subsequence. Hence, taking a diagonal (not-relabelled) subsequence of $(\varphi_k)_{k\ge1}$, this yields
 a sequence $(E_i)_{i\ge1}$ of mutually disjoint finite perimeter sets in $\Omega$ such that for every $m\ge1$, $E_1,\dots,E_m$ satisfy 
properties (i)-(iii) (with $\indiceteo$ in place of $N$) with the 
corresponding $(d^{\pip}_k)_{k\ge1} \subset \mathbb Z$ and $\varphi_\infty^{\pip}\in 
GSBV^p(\Omega)$ for $i\in \{1,\dots,\indiceteo\}$.
In particular, from \eqref{eq:liminf-n-insiemi},
\begin{equation}\label{liminf-passo-n}
				\liminf_{k\rightarrow +\infty}
\Hausdorff(S_{\varphi_k})\geq \Hausdorff( 
S_{\Phi_\indiceteo}\cup(\Omega\cap\partial^*(\cup_{i=1}^
\indiceteo E_i)))\,,\quad\forall \indiceteo
\ge1
			\end{equation}
			with $\Phi_\indiceteo$ 
as in \eqref{def:phi-barrato-m} for $N=\indiceteo$.
		}
		We next show that $$|\Om\setminus (\cup_{i=1}^{+\infty} E_i)|=0.$$ To this aim, since $\sum_{\indiceteo=1}^{+\infty}
|E_\indiceteo|<+\infty$, the sequence 
$(|E_\indiceteo|)$ tends to zero as $\indiceteo
\rightarrow +\infty$\footnote{Note that it might be $E_i=\emptyset$ for $i\ge\bar i$, this case is simpler to treat.}, and by the inequality 
$|E_\indiceteo|\geq \frac{\dimension^\dimension \omega_\dimension
|F_\indiceteo|^2}{2^\dimension(3M+\Hausdorff(\partial \Om))^2}$  we also 
infer that $|F_\indiceteo|\rightarrow 0$. 
In particular, $$|\Om\setminus (\cup_{i=1}^{+\infty} E_i)|=\lim_{\indiceteo
\rightarrow+\infty}|\Om\setminus (\cup_{i=1}^{\indiceteo
-1} E_i)|=\lim_{\indiceteo
\rightarrow +\infty}|F_\indiceteo|=0\,.$$ Finally we define
		$$\varphi_\infty(x):=\varphi_\infty^{\pip}(x)\qquad \text{ 
if }x\in E_i {\rm ~for ~some~} i \in \NN.$$
		We now show that $\varphi_\infty\in GSBV^p(\Om)$ 
and that \eqref{eq:lsc} holds true.
			By definition of $\varphi_\infty$ 
and $\Phi_\indiceteo$ it follows that  
			\begin{equation*}
				\Hausdorff(S_{\varphi_\infty})
				\le 
				\lim_{m\to +\infty}\Hausdorff( S_{\Phi_\indiceteo})
				\le 
\lim_{m\to +\infty}\Hausdorff( S_{\Phi_\indiceteo}
\cup(\Omega\cap\partial^*(\cup_{i=1}^\indiceteo
E_i))).
			\end{equation*}
			This together with \eqref{liminf-passo-n} 
yields \eqref{eq:lsc}. 
			Eventually, being $\varphi_\infty$ limit of liftings 
of $u$, it is itself a lifting of $u$ in $\Omega$ and therefore from 
the identity $|\nabla u|=|\nabla \varphi_\infty|$ 
we infer $\varphi_\infty\in GSBV^p(\Omega).$
\qed

\subsection{Proof of Theorem \ref{teo:compactness}}
\label{sec:compactness_of_liftings_of_a_converging_sequence}
	Let $\varphi\in SBV^p(\Om)\cap L^\infty(\Om)$ be a lifting of $u$ with 
\begin{equation}\label{eq:varphi_BV}
|\varphi|_{BV}\leq 2|u|_{BV}\leq C,
\end{equation}
whose existence is ensured by Theorem \ref{thm:dav-ign} and Remark \ref{rem:dav-ign}. 
The idea of the proof is to 
construct a sequence $(\widetilde\psi_k)_{k\ge1} \subset SBV^p(\Om) \cap L^\infty(\Om)$
(see \eqref{eq:w_k}) 
having the following properties: 
			\begin{equation}\label{eq:cons}
				\begin{split}
	&\widetilde	\psi_k\to0 \quad {\rm ~in } ~ L^1(\Omega) 
	{\rm ~and~
		weakly}^* ~ {\rm in~} BV(\Omega),\\
&	\widetilde\varphi_k:=	\varphi_k-\widetilde\psi_k= \varphi+2\pi\sum_{z\in \mathbb Z}z\chi_{F_k^z}\,,\\
&	\sup_{k \in \NN}
	\Hausdorff
	(\widetilde\varphi_k)<+\infty,
				\end{split}
\end{equation}
			with $(F_k^z)_{z\in\mathbb{Z}}$ a 
Caccioppoli partition of $\Omega$ for all $k\ge1$,
see also \eqref{eq:unse}. Note that
each $\widetilde \varphi_k$ is a lifting of the limit map $u$.
Hence we can apply Theorem \ref{thm:compactness&lsc} 
to the sequence $(\widetilde \varphi_k)$, 
and eventually from the convergence of (a not relabelled 
subsequence of) $(\widetilde\varphi_k)_{k\ge1}$ and Theorem \ref{teo:vito}	deduce compactness and lower semicontinuity for the original sequence $(\varphi_k)_{k\ge1}$. 
We now give the details of the proof.
\medskip

		\noindent	\textit{Step 1: construction of $\widetilde\psi_k$.}
		For $k\geq 1$ 
define first
$$
\psi_k:=\varphi_k-\varphi \in GSBV^p(\Om). 
$$
Since $\varphi$ is a lifting of $u$,
we have $\jump{\varphi} \in  2\pi \mathbb Z$ on $S_\varphi\setminus S_u$, 
thus, using also \eqref{eq:varphi_BV},  
$\Hausdorff(S_\varphi)=
\Hausdorff
(S_u)+\Hausdorff(S_\varphi\setminus S_u)<+\infty$,
and $|\nabla u|=|\nabla \varphi|$ 
a.e. on $\Om$.
Moreover, since $S_{\psi_k}\subseteq S_{\varphi_k}\cup S_{\varphi}$ and 
$|\nabla \psi_k|\le |\nabla \varphi_k|+|\nabla \varphi|=|\nabla u_k|+|\nabla u|$  
a.e. on $\Om$, we have, using also \eqref{eq:dododo},
		\begin{equation}\label{bound-vk}
			\sup_{k\geq 1} \Big(\int_\Om|\nabla\psi_k|\dx+
\Hausdorff
(S_{\widetilde\psi_k})\Big)\le C<+\infty\,.
		\end{equation}
For every $k \in \NN$ and $z\in \mathbb{Z}$ we define
$$\psi_{k}^z:=(\psi_k\wedge 2\pi z)\vee 2\pi(z-1)\in SBV^p(\Om)\,,$$ whose jump set decomposes as
		\begin{align*}
			S_{\psi_k^z}=S^1_{\psi_k^z}\cup S^2_{\psi_k^z}\cup S^3_{\psi_k^z}\cup S_{\psi_k^z}^4\,,
		\end{align*}
		with
		\begin{align*}
			&S^1_{\psi_k^z}:=\{x\in S_{\psi_k^z}:2\pi(z-1)<(\psi_k^{z})^-(x)<(\psi_k^z)^+(x)<2\pi z\}\,,\\
			&S^2_{\psi_k^z}:=\{x\in S_{\psi_k^z}:2\pi(z-1)=(\psi_k^{z})^-(x)<(\psi_k^z)^+(x)<2\pi z\}\,,\\
			&S^3_{\psi_k^z}:=\{x\in S_{\psi_k^z}:2\pi(z-1)<(\psi_k^{z})^-(x)<(\psi_k^z)^+(x)=2\pi z\}\,,\\
			&S^4_{\psi_k^z}:=\{x\in S_{\psi_k^z}:2\pi(z-1)=(\psi_k^{z})^-(x)<(\psi_k^z)^+(x)=2\pi z\},
		\end{align*}
	where $(\psi_k^z)^\pm$ 
are
the two traces of $\psi_k^z$ on $S_{\psi_k^z}$.
		As $\psi_k\in GSBV^p(\Om)$  we have,  
up to a $\Hausdorff$-negligible set,
		\begin{equation}\label{eq:union_of_three_jumps}
			\bigcup_{z\in \mathbb Z}S^1_{v_k^z}\cup \bigcup_{z\in \mathbb Z}S^2_{v_k^z}\cup \bigcup_{z\in \mathbb Z}S^3_{v_k^z}\subseteq S_{v_k}\,.
		\end{equation}
Notice that,
for $\Hausdorff$-a.e. $x\in S_{\psi_k}$, 
$x$ belongs to at most $2$ sets appearing in 
the union on the 
left-hand side of \eqref{eq:union_of_three_jumps}; hence in particular from \eqref{bound-vk}, for all $k\ge1$
		\begin{align}\label{sum_1}
\sum_{z\in \mathbb Z}\left(\Hausdorff(S^1_{\psi_k^z})+
\Hausdorff
(S^2_{\psi_k^z})+\Hausdorff(S^3_{\psi_k^z})\right)\leq 2\Hausdorff(S_{\psi_k})\le C\,,
		\end{align}for some constant $C>0$. 
		Furthermore by definition of $\psi_k^z$,
		\begin{align}\label{sum_2}
			\int_\Om|\nabla \psi_k|\dx=\sum_{z\in \mathbb Z}\int_\Om|\nabla \psi_k^z|\dx\le C
\qquad \forall k \in \mathbb N.
		\end{align}
Consider now the function 
$\tau_k^z\in SBV^p(\Omega)$, defined by
$$
\tau_k^z:=
\Big|
\psi^z_k-2\pi\Big(z-\frac12\Big)\Big|, 
$$
which satisfies  $0\le\tau_k^z\le\pi$,
$$
\int_\Om|\nabla \psi^z_k|\dx=\int_\Om|\nabla \tau^z_k|\dx,\qquad |\jump{\tau_k^z}|\leq |\jump{\psi_k^z}| \;\;\;\Hausdorff-\text{a.e. on }S_{\tau_k^z}\,,
$$ 
		\begin{align*}
			S_{\tau_k^z}\subseteq S^1_{\psi_k^z}\cup S^2_{\psi_k^z}\cup S^3_{\psi_k^z}\,,
		\end{align*}
		owing to the fact that $\tau^z_k$ 
has null jump on $S^4_{\psi_k^z}$.
		In particular,
we infer from \eqref{sum_1} and \eqref{sum_2} that 
		\begin{equation}\label{eq:vert_D}
			\sum_{z\in \mathbb Z}|D\tau^z_k|(\Om)\le C \qquad \forall
k \in \mathbb N,
		\end{equation} 
		for some positive constant $C$. 

Now, for $t\ge 0$, let  $(E^z_k)^t:=\{x\in \Om:\tau^z_k(x)<t\}$; 
		by the coarea formula, 
		\begin{align*}
			\int_0^{\pi}\Hausdorff
(\partial^*(E_k^z)^t)\dt=|D\tau_k^z|(\Om)\,,
		\end{align*}
		hence, summing over $z$
and using \eqref{eq:vert_D}, 
		\begin{align*}
			\int_0^{\pi}	\sum_{z\in \mathbb Z}(\Hausdorff
(\partial^*(E_k^z)^t))\dt=
			\sum_{z\in \mathbb Z}\int_0^{\pi}\Hausdorff
(\partial^*(E_k^z)^t)\dt\le C.
		\end{align*}
Whence, 		
for any $k\in \mathbb Z$ we can find
a number $t_k\in (0,\pi/2)$ such that 
		\begin{align}
			\sum_{z\in \mathbb Z}\Hausdorff
(\partial^*(E_k^z)^{t_k})\leq C\,,
		\end{align}
		with $C>0$ independent of $k$.
		We observe that 
$$(E_k^z)^{t_k}=
\left\{x\in\Om:2\pi\Big(z-\frac12\Big)-t_k<\psi_k^z(x)=\psi_k(x)<2\pi\Big(z-\frac12\Big)+t_k\right\}\,.$$
		For every $k$ let $(F_k^z)_{z\in\mathbb{Z}}$ be the 
Caccioppoli 
partition of $\Omega$ defined as
		$$F_k^z:=\Big\{x\in\Om:\psi_k(x)\in		\Big( 2\pi\Big(z-\frac12\Big)-t_k,2\pi\Big(z+1-\frac12\Big)-t_k\Big)\Big\}\,.$$
		Thus $\partial^* F_k^z\subset \partial^* (E_k^z)^{t_k}\cup \partial^* (E_k^{z+1})^{t_k}$ and
		\begin{align}\label{55}
			\sum_{{z}\in \mathbb Z}\Hausdorff
(\partial^*F_k^z)\leq { 2}\sum_{{z}\in \mathbb Z}\Hausdorff
(\partial^*(E_k^z)^{t_k})\leq C\,,
		\end{align}
		and so for each $k\in \mathbb N$ the family 
$(F_k^z)_{z\in\mathbb Z}$ is a 
Caccioppoli partition of $\Om$, with equibounded (in $k$) total perimeter. 
		
We finally introduce the map $\widetilde\psi_k\in SBV^p(\Om)\cap L^\infty(\Om)$ as 
\begin{equation}\label{eq:w_k}
\widetilde\psi_k:=\sum_{z\in \mathbb Z}\Big(\psi_k-(2\pi(z-\frac12)-t_k)\Big)\chi_{F_k^z}-\pi-t_k=\psi_k-2\pi\sum_{{z}\in \mathbb Z} z\chi_{F_k^z},
\end{equation}
which satisfies {$-\pi\leq \widetilde\psi_k\leq \pi$}. 
 Moreover, since $S_{\widetilde\psi_k}\subseteq S_{\psi_k}\cup \bigcup_{z\in \mathbb Z} \partial^*F_k^z$, by \eqref{55} we deduce that 
		\begin{align}\label{56}
			\Hausdorff
(S_{\widetilde\psi_k})+\|\widetilde\psi_k\|_{BV}\leq C,
		\end{align}
		for all $k \geq 1$. Therefore, 
up to a not relabelled subsequence, we can also suppose 
\begin{align*}
\widetilde\psi_k\rightarrow \widetilde\psi\qquad \text{ in }L^1(\Om) {\rm ~and}~ 
\text{ weakly}^* {\rm ~in~} BV(\Om) 
		\end{align*}
		for some $\widetilde\psi\in SBV^p(\Om)\cap L^\infty(\Om)$.
As asserted in \eqref{eq:cons}, we now want to show that $\widetilde\psi= 0$ a.e. on $\Om$. 
		Let us go back to the functions $u_k$ that, up to subsequences, are converging pointwise a.e. to $u$.  {Hence for a.e. $x$ we have
			\begin{equation*}
				u_k(x)-u(x)=e^{i\varphi(x)}(e^{i(\varphi_k(x)-\varphi(x))}  -1 )\to0\,.
			\end{equation*}
		}
		This in turn  implies that the map 
$x\mapsto\text{{\rm dist}}(\varphi_k(x)-\varphi(x),2\pi\mathbb Z)
=\text{{\rm dist}}(\psi_k(x),2\pi\mathbb Z)$ 
is converging pointwise a.e. to $0$, and by the
dominated convergence theorem, in $L^1(\Om)$. 
		{Then {there is $\bar k\ge0$ such that} for a.e.  $x\in \Om$ and}
		for all $k\geq {\bar k}$ there is $K=K(x,k)\in \mathbb Z$ such that 
$$
x\in F^{K}_k\quad \text{ and }\quad  |\widetilde\psi_k(x)|=|\psi_k(x)-2\pi K|\rightarrow 0 \qquad \text{as }k\rightarrow +\infty,
$$ 
and so by the dominated convergence theorem we conclude $\widetilde\psi=0$.
		
\medskip
			\textit{Step 2: compactness and 
lower semicontinuity.}
		We  first observe that $\psi_k-\widetilde\psi_k\in GSBV^p(\Om)$
takes values in $2\pi\mathbb Z$. As a 
consequence, the maps $\widetilde \varphi_k
=\varphi_k-\widetilde\psi_k=\psi_k-\widetilde\psi_k+\varphi$ are all liftings of $u$ and  from \eqref{56} 
we also have
		\begin{equation}\label{eq:unse}
			\sup_{k\geq 1}\Hausdorff
(S_{\widetilde \varphi_k})<+\infty.
		\end{equation}
 We now apply Theorem \ref{thm:compactness&lsc} to the sequence $(\widetilde \varphi_k)_{k\ge1}$, 
and deduce the existence of a Caccioppoli partition $(E_i)_{i\ge 1}$ of $\Om$, of
sequences $(\dki)_{k\ge1}\subset 2\pi\mathbb Z$ for all $i\ge1$, 
and of a lifting $\varphi_\infty\in GSBV^p(\Om)$
 of $u$, such that
 \begin{equation*}
\begin{split}
&\lim_{k \to +\infty}	(\widetilde\varphi_k(x)-2\pi \dki
)= \varphi_\infty(x)\quad \text{for a.e. }x\in E_i\,,\\
&\lim_{k \to +\infty} |\widetilde\varphi_k(x)-2\pi 
\dki
|=+ \infty\quad \text{for a.e. }x\in \Omega\setminus E_i\,,
\end{split}
 \end{equation*}
 for all $i\ge1$. 
  As a consequence, by  Step 2, 
  \begin{equation*}
  \begin{split}
  	&\lim_{k \to +\infty}	(\varphi_k(x)-2\pi d^{\pip}_k)=\varphi_\infty(x)\quad \text{for a.e. }x\in E_i\,,\\
  	&\lim_{k \to +\infty}|\varphi_k(x)-2\pi d^{\pip}_k|= + \infty\quad \text{for a.e. }x\in \Omega\setminus E_i\,,
  \end{split}
\end{equation*}
for all $i\ge1$.   
  
It remains to show  \eqref{eq:liminf}. { For all $i\ge1$ let $\varphi_\infty^{\pip}:=\varphi_\infty$ in $E_i$ and $\varphi_\infty^{\pip}:=0$ in $\Omega\setminus E_i$.
  	By Theorem \ref{teo:vito} and the fact that $S_{\varphi_k}=S_{\varphi_k-2\pi d^{\pip}_k}$	it follows
  	\begin{equation*}
  	\liminf_{k\to+\infty}\Hausdorff
(S_{\varphi_k}\cap \sottoaperto)\ge 
  	\Hausdorff
( (S_{\varphi_\infty^{\pip}}\cup\partial^*E_i)\cap \sottoaperto  )\,,
  	\end{equation*}
  for any open set $\sottoaperto\subset\Omega$. Then by Lemma \ref{lem:ricoprimenti} for all $N\ge1$ we get
  \begin{equation*}
  		\liminf_{k\to+\infty}\Hausdorff
(S_{\varphi_k})\ge \Hausdorff
(S_{\overline\varphi_\infty^{\pNp}}\cup(\partial^*(\cup_{i=1}^NE_i)))\,,
  \end{equation*}
  where
  \begin{equation*}
  	\overline\varphi_\infty^{\pNp} :=
  	\begin{cases}
  		\varphi^{\pip}_\infty&\text{ if }x\in E_i\text{ 
for some }i=1,\dots,N\,,\\
  		0&\text{ if }x\in \Omega\setminus(\cup_{i=1}^N E_i)\,.
  	\end{cases}
  \end{equation*}
  Hence,
 observing that $\varphi_\infty=
\overline\varphi_\infty^{\pNp}$ in $\cup_{i= 1}^NE_i$, 
and letting $N\to+ \infty$ we get
   \begin{equation*}
 	\liminf_{k\to+\infty}\Hausdorff
(S_{\varphi_k})\ge \Hausdorff
(S_{\varphi_\infty}\cup(\partial^*(\cup_{i=1}^{+\infty} E_i)))
 	\ge \Hausdorff
(S_{\varphi_\infty})\,.
 \end{equation*}
}
\qed

\section{$\Gamma$-convergence of functionals 
on $\mathbb{S}^1$- valued maps}
\label{sec:Gamma_convergence}
In this section we prove Theorems 
\ref{thm1} and \ref{thm2}. The proof of Theorem \ref{thm1} 
follows by suitably adapting 
the arguments of \cite{AT90,AT92,Focardi}. 
Instead, the proof of Theorem \ref{thm2} (and more 
specifically the lower bound inequality) requires some new ideas which rely on the compactness result 
for liftings (Theorem \ref{teo:compactness}).
For convenience we introduce the localised Modica-Mortola-type (or Allen-Cahn type) functionals
\begin{equation*}
\ModicaMortolaeps
(v, \sottoaperto)
:=\int_{\sottoaperto}\left(\var |\nabla v|^2+\frac{(v-1)^2}{4\var}
\right)\dx \qquad \forall v \in W^{1,2}(\Om),
\end{equation*}
for every open set $\sottoaperto\subseteq\Omega$. 

\subsection{Some density and approximation results for $\Suno$-valued maps}
\label{subsec:some_density}
For $E\subset\R^\dimension$ we denote by $\mathcal{M}^{\codone}(E)$  its $(\codone)$-dimensional Minkowski content, i.e.,
\begin{equation*}
\Minkowski(E)=\lim_{\rho\searrow0}
\frac{|\{x\in\R^\dimension\colon\dist(x,E)<\rho\}|}{2\rho}\,,
\end{equation*}
provided the limit exists.
\begin{prop}[\textbf{Density in $SBV^2(\Omega;\Suno)$}]
\label{prop:density}
Let  $\Omega\subset\R^\dimension$ be a 
bounded open set with Lipschitz boundary,
 $u\in SBV^2(\Omega;\mathbb{S}^1)$ and let 
$\sopraaperto\subset\R^\dimension$ 
be a bounded open set with 
$\Omega\subset\subset\sopraaperto$.  Then $u$ has an extension 
$\extu
\in SBV^2(\sopraaperto;\mathbb{S}^1)$ with 
$$
\Hausdorff(S_\extu\cap\partial\Omega)=0. 
$$
Moreover, there exists a sequence $(\apprextuk)\subset SBV^2(\sopraaperto;\Suno)$ such that:
\begin{enumerate}[label=$(\roman*)$]
	\item $\apprextuk$ converges to $\extu$ in $L^1(\sopraaperto;\Suno)$;
\item $\nabla \apprextuk$ converges to $\nabla \extu$ in 
$L^2(\sopraaperto;\R^{\dimension\times \dimension})$;
\item $\lim_{k\to\infty}\Hausdorff(S_{\apprextuk})=\Hausdorff(S_\extu)$;
\item $\lim_{k\to\infty}
\Hausdorff
(S_{\apprextuk}\cap\overline\Omega)=\Hausdorff
(S_\extu\cap\Omega)=\Hausdorff(S_u)$;
\item $\Hausdorff(\overline S_{\apprextuk}\setminus S_{\apprextuk})=0$ 
and $\Hausdorff(S_{\apprextuk}\cap K)=\mathcal{M}^{\codone}(S_{\apprextuk}
\cap K)$ for every compact set $K\subset\sopraaperto$.
\end{enumerate}
\end{prop}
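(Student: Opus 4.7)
The plan has three main steps: (a) extend $u$ to $\widehat\Omega$ by local reflection across $\partial\Omega$, so that no jump is produced on $\partial\Omega$; (b) apply a standard $SBV$-density theorem with polyhedral jump sets to $\widehat u$ viewed as $\mathbb R^2$-valued; (c) project the resulting approximations pointwise onto $\mathbb S^1$ via a coarea-slicing argument.

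In step (a), since $\partial\Omega$ is Lipschitz, I cover it by finitely many open charts in which bi-Lipschitz maps flatten $\partial\Omega$ to the hyperplane $\{y_n=0\}$. In each chart the pulled-back $u$ is extended by the domain reflection $(y',y_n)\mapsto(y',-y_n)$. Because the reflection acts only on the domain, the values stay in $\mathbb S^1$; because the two $SBV$-traces on $\partial\Omega$ coincide by construction, no jump is created on $\partial\Omega$. Patching these local extensions along pairwise disjoint Lipschitz pieces covering the tubular neighborhood $\widehat\Omega\setminus\overline\Omega$, and fixing a constant $\mathbb S^1$-value farther from $\partial\Omega$, produces $\widehat u\in SBV^2(\widehat\Omega;\mathbb S^1)$ with $\mathcal H^{n-1}(S_{\widehat u}\cap\partial\Omega)=0$ and only finite additional jump set inside $\widehat\Omega\setminus\overline\Omega$.

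In step (b), I apply the vectorial version of the classical $SBV$-density result with polyhedral jump sets (in the spirit of \cite{Focardi}) to $\widehat u$ regarded as $\mathbb R^2$-valued: this yields $v_k\in SBV^2(\widehat\Omega;\mathbb R^2)$, smooth outside $S_{v_k}$, with $S_{v_k}$ a closed essentially polyhedral hypersurface (so that the analogue of (v) already holds for $v_k$), $v_k\to\widehat u$ in $L^1$, $\nabla v_k\to\nabla\widehat u$ in $L^2$, $\mathcal H^{n-1}(S_{v_k})\to\mathcal H^{n-1}(S_{\widehat u})$, and $\mathcal H^{n-1}(S_{v_k}\cap\overline\Omega)\to\mathcal H^{n-1}(S_{\widehat u}\cap\Omega)$. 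Standard truncation allows one to assume $\|v_k\|_\infty\le 2$.

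In step (c), since $|\widehat u|=1$ a.e.\ and (up to a subsequence) $v_k\to\widehat u$ pointwise a.e., the bad set $\{|v_k|<1/2\}$ has vanishing Lebesgue measure. By the coarea formula and the Cauchy--Schwarz inequality,
\begin{equation*}
\int_{1/4}^{1/2}\mathcal H^{n-1}(\{|v_k|=t\})\,dt\le\|\nabla v_k\|_{L^2}\,|\{1/4<|v_k|<1/2\}|^{1/2}\longrightarrow 0,
\end{equation*}
so there exists $t_k\in[1/4,1/2]$ with $\mathcal H^{n-1}(\{|v_k|=t_k\})\to 0$. I then set $\widehat u_k:=v_k/|v_k|$ on $\{|v_k|>t_k\}$ and extend by a fixed unit vector on the complement. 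The resulting map lies in $SBV^2(\widehat\Omega;\mathbb S^1)$ with $S_{\widehat u_k}\subseteq S_{v_k}\cup\{|v_k|=t_k\}$; the Lipschitz character of $z\mapsto z/|z|$ on $\{|z|\ge 1/4\}$ transfers (i)--(iv) from $v_k$ to $\widehat u_k$ via dominated convergence on the region of projection, and (v) is inherited from $S_{v_k}$ plus the smooth level $\{|v_k|=t_k\}$ whose surface measure vanishes in the limit. The main obstacle is precisely this projection step: one must reconcile the $\mathbb S^1$-constraint with the delicate convergences in (iii)--(v), which is achieved through the slicing above, whose crucial ingredient is the vanishing of $\mathcal H^{n-1}(\{|v_k|=t_k\})$ coming from the $L^2$-bound on $\nabla v_k$ combined with the concentration of $|v_k|$ at the value $1$ forced by $|\widehat u|\equiv 1$.
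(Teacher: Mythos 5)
Your extension step (a) is equivalent to the paper's: the paper lifts $u$ to $\varphi\in SBV^2(\Omega)$, reflects the lifting across $\partial\Omega$, and then exponentiates; you reflect $u$ directly in the domain. Both give $\widehat u\in SBV^2(\widehat\Omega;\mathbb S^1)$ with $\mathcal H^{n-1}(S_{\widehat u}\cap\partial\Omega)=0$. From there the two approaches genuinely diverge. The paper produces the approximating sequence $(\widehat u_k)$ as \emph{constrained minimizers}: for each $k$ it invokes \cite[Lemma~4.3]{Carriero-Leaci} to solve
$\min_{w\in SBV^2(\widehat\Omega;\mathbb S^1)}\big\{\int|\nabla w|^2 + \mathcal H^{n-1}(S_w) + k\int|w-\widehat u|\big\}$,
and then (i)--(iv) follow from Ambrosio's lower semicontinuity while (v) is a consequence of the \emph{regularity theory} for these constrained minimizers (\cite[Lemmas~4.5 and~4.9]{Carriero-Leaci}, following \cite[Prop.~5.3]{AT90}). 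You instead approximate $\widehat u$ in the \emph{unconstrained} class $SBV^2(\widehat\Omega;\mathbb R^2)$ by maps $v_k$ with essentially polyhedral jump set and then project by $z\mapsto z/|z|$, using a coarea slicing to control the level $\{|v_k|=t_k\}$. That slicing argument is correct and (i), (ii) transfer as you say; (iii)--(iv) need the extra observation that Ambrosio's compactness gives the matching lower bound $\liminf_k\mathcal H^{n-1}(S_{\widehat u_k})\ge\mathcal H^{n-1}(S_{\widehat u})$, but that is easily supplied.

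The real gap is property (v). After projection you only know $S_{\widehat u_k}\subseteq S_{v_k}\cup\{|v_k|=t_k\}$, and the inclusion can be strict: at points of $S_{v_k}$ where the two traces $v_k^\pm$ are parallel, the projected traces agree and the jump disappears. Thus $S_{\widehat u_k}$ is a priori only an $\mathcal H^{n-1}$-measurable \emph{subset} of a nice set, and neither $\mathcal H^{n-1}(\overline{S_{\widehat u_k}}\setminus S_{\widehat u_k})=0$ nor the identity $\mathcal H^{n-1}(S_{\widehat u_k}\cap K)=\mathcal M^{n-1}(S_{\widehat u_k}\cap K)$ is inherited by passing to subsets (the Minkowski content equality is notoriously unstable under taking rectifiable subsets, and there is no upper-Minkowski monotonicity inequality in the direction you need once the ambient set has strictly larger Hausdorff measure). ``(v) is inherited from $S_{v_k}$ plus the smooth level $\{|v_k|=t_k\}$'' is therefore not justified: you would have to prove that $S_{\widehat u_k}$ is essentially \emph{equal} to the union, or else add an artificial controlled perturbation forcing it, and either way you would have to re-establish the density estimates yourself. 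This is precisely the point where the constrained-minimization route of the paper has a structural advantage: Carriero--Leaci's regularity theory is formulated for $\mathbb S^1$-valued minimizers and delivers (v) directly, without any projection step.
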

Note that (ii)-(iii) guarantee the convergence of $\MSSuno(\apprextuk,1)$ to 
$\MSSuno(\extu,1)$ in $\sopraaperto$.

\begin{proof}
	The proof follows by suitably 
adapting \cite[Lemma 4.11]{Braides-Notes} 
and \cite[Proposition 5.3]{AT90}
to the $\Suno$-constrained case. For this reason we give here only the main steps. 
According to 
Remark \ref{rem:dav-ign} there exists a
lifting $\varphi\in SBV^2(\Omega)$ of $u$.  Since $\partial\Omega$ is Lipschitz, by a standard reflection argument we can construct an extension $\widehat\varphi\in SBV^2(\sopraaperto)$ of $\varphi$ such that 
	\begin{align*}
\Hausdorff(S_{\widehat \varphi}\cap\partial\Omega)=0.
	\end{align*}
By defining 
$$
\extu\colon\sopraaperto\to\mathbb{S}^1, \qquad
\extu:=e^{i\widehat\varphi}, 
$$
we immediately get that $\extu$ is
an extension of $u$, and 
	\begin{equation*}
	\int_{\sopraaperto}|\nabla \extu|^2\dx=	\int_{\sopraaperto}|\nabla \widehat\varphi|^2\dx\,, \quad \Hausdorff(S_\extu)\le \Hausdorff(S_{\widehat\varphi})\,,
	\end{equation*}
	and thus $\extu\in SBV^2(\sopraaperto;\mathbb{S}^1)$. 
Moreover \begin{align}\label{6}
\Hausdorff(S_\extu\cap\partial\Omega)\le \Hausdorff(S_{\widehat\varphi}\cap\partial\Omega)=0.	
	\end{align}
	\noindent
	 By \cite[Lemma 4.3]{Carriero-Leaci} we know that, for every integer $k>0$, there exists $\apprextuk\in SBV^2(\sopraaperto;\mathbb S^1)$ 
such that 
	\begin{equation*}
\begin{aligned}
&\int_{\sopraaperto}|\nabla \apprextuk|^2\dx+\Hausdorff
(S_{\apprextuk})+ k\int_{\sopraaperto}|\apprextuk-\extu|\dx
\\
=& \min_{w\in SBV^2(\sopraaperto;\mathbb S^1)}\left( 
\int_{\sopraaperto}|\nabla w|^2\dx+
\Hausdorff
(S_w)+ k\int_{\sopraaperto}|w-\extu|\dx
		\right)\,.
\end{aligned}
	\end{equation*}
Clearly 
$\|\apprextuk\|_\infty=1$ and 
\begin{equation}\label{est1}
 \int_{\sopraaperto}|\nabla \apprextuk|^2\dx+\Hausdorff(S_{\apprextuk})
+ k\int_{\sopraaperto}|\apprextuk-\extu|\dx
\le \int_{\sopraaperto}|\nabla \extu|^2\dx+\Hausdorff(S_\extu)<+\infty\,.
\end{equation}
Thus, in particular, $(\apprextuk)$ converges to $\extu$ in 
$L^1(\sopraaperto;\mathbb{S}^1)$ as $k\to+\infty$, and $(i)$ follows. 
By \cite{Ambrosio} we also get, up to a not relabelled subsequence,
\begin{equation*}
	\nabla \apprextuk\rightharpoonup\nabla \extu\quad \text{in 
}L^2(\sopraaperto;\R^{\dimension\times \dimension})\,,
\qquad \liminf_{k\to+\infty}\Hausdorff
(S_{\apprextuk})\ge \Hausdorff
(S_\extu)\,.
\end{equation*}
This and \eqref{est1} imply
\begin{equation*}
\begin{split}
\int_{\sopraaperto}|\nabla \extu|^2\dx+\Hausdorff
(S_\extu)= \lim_{k\to+\infty}
\left(\int_{\sopraaperto}|\nabla \extuk|^2\dx+\Hausdorff
(S_{\extuk})\right)
\,,\end{split}
\end{equation*}
from which we readily deduce $(ii)$ and $(iii)$. 
Again by \cite{Ambrosio} and \eqref{6} we have 
\begin{equation*}
\Hausdorff
(S_\extu\cap\Omega) \le \liminf_{k\to+\infty}
\Hausdorff
(S_{\apprextuk}\cap\Omega)\le \liminf_{k\to+\infty}
\Hausdorff
(S_{\apprextuk}\cap\overline\Omega)\,,
\end{equation*}
\begin{equation*}
\Hausdorff
(S_\extu\setminus\Omega)= \Hausdorff
(S_\extu\setminus\overline\Omega)
\le \liminf_{k\to+\infty}
\Hausdorff
(S_{\apprextuk}\setminus\overline\Omega)\,,
\end{equation*}
which in turn imply $(iv)$. Moreover,
invoking \cite[Lemma 4.5]{Carriero-Leaci} we have 
$\Hausdorff
(\overline S_{\apprextuk}\setminus S_{\apprextuk})=0$. 
Finally, thanks to the density estimate in \cite[Lemma 4.9]{Carriero-Leaci} 
and arguing exactly as in \cite[Proposition 5.3]{AT90} 
it can be shown that 
$$\Hausdorff
(S_{\apprextuk}\cap K)=\mathcal{M}^{\codone}(S_{\apprextuk}\cap K)\,,$$ 
for every compact set $K\subset\sopraaperto$ and $(v)$ is proven.
\end{proof}
A similar result holds also for liftings:
\begin{prop}[\textbf{Density result for liftings}]\label{prop:density2}
	Let  $\Omega$ and $\sopraaperto\subset\R^\dimension$ 
be as in Proposition~\ref{prop:density}. 
Let $u\in SBV^2(\Omega;\Suno)$ and $\varphi \in SBV^2(\Om)
\cap L^\infty(\Om)$ be a lifting of $u$.
	Then $\varphi$ has an extension $\widehat \varphi\in 
SBV^2(\sopraaperto)\cap L^\infty(\sopraaperto)$  satisfying $\Hausdorff
(S_{\widehat\varphi}\cap\partial\Omega)=0$. 
Moreover, there exists a sequence $(\apprextliftk)\subset SBV^2
(\sopraaperto)$ such that:
	\begin{enumerate}[label=$(\roman*)$]
		\item $\apprextliftk$ converges to $\extlift$ in $L^1(\sopraaperto)$;
		\item $\nabla \apprextliftk$ converges to 
$\nabla \extlift$ in $L^2(\sopraaperto;\R^\dimension)$;
		\item $\lim_{k\to\infty}\Hausdorff
(S_{\apprextliftk})=\Hausdorff
(S_{\widehat \varphi})$;
		\item $\lim_{k\to\infty}\Hausdorff
(S_{\apprextliftk}\cap\overline\Omega)=\Hausdorff
(S_{\widehat \varphi}\cap\Omega)=\Hausdorff
(S_\varphi)$;
\item $\Hausdorff(\overline S_{\apprextliftk}\setminus S_{
\apprextliftk
})=0$ and 
$\Hausdorff(S_{\apprextliftk}\cap K)=\mathcal{M}^{\codone}(S_{\apprextliftk
}\cap K)$ for every compact set $K\subset\sopraaperto$;
		\item  $\extu_k:=e^{i\apprextliftk}$ converges to $u$ 
in $L^1(\Omega;\mathbb S^1)$.
	\end{enumerate}
\end{prop}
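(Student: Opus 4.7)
The proof will parallel very closely the proof of Proposition \ref{prop:density}, with the needed modifications dictated by the fact that we work with a scalar lifting in $SBV^2(\Om)\cap L^\infty(\Om)$ instead of a $\Suno$-valued map. In particular, we do \emph{not} need to invoke lifting theory at the start (since $\varphi$ is already given as a real-valued function), and the $L^\infty$ bound on $\varphi$ will be propagated to the extension by a direct reflection construction.

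First I would construct the extension. Since $\partial\Om$ is Lipschitz, one can cover $\partial\Om$ by finitely many open sets in which, after a bi-Lipschitz change of variables, $\Om$ looks like a half-space and the extension of $\varphi$ is performed by reflection across the flat boundary; patching together the local pieces via a partition of unity and extending by a constant outside a neighbourhood of $\overline\Om$ yields $\extlift\in SBV^2(\sopraaperto)$ with $\|\extlift\|_{L^\infty(\sopraaperto)}\le \|\varphi\|_{L^\infty(\Om)}$ and $\Hausdorff(S_{\extlift}\cap \partial\Om)=0$ (the jump set of $\extlift$ near $\partial\Om$ is the symmetric union of $S_\varphi$ and its reflected copy, neither of which charges $\partial\Om$). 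This is exactly the step performed in the proof of Proposition \ref{prop:density}, only applied directly to $\varphi$ instead of to an auxiliary lifting.

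Next I would obtain the approximating sequence by Carriero--Leaci minimization: for each integer $k\ge 1$, let $\apprextliftk\in SBV^2(\sopraaperto)$ be a minimizer (which exists by \cite[Lemma 4.3]{Carriero-Leaci}) of
\begin{equation*}
 w\mapsto \int_{\sopraaperto}|\nabla w|^2\dx+\Hausdorff(S_w)+k\int_{\sopraaperto}|w-\extlift|\dx\,,
\end{equation*}
among $w\in SBV^2(\sopraaperto)$. Testing with $w=\extlift$ gives the uniform bound $\int_{\sopraaperto}|\nabla \apprextliftk|^2\dx+\Hausdorff(S_{\apprextliftk})+k\int_{\sopraaperto}|\apprextliftk-\extlift|\dx\le \int_{\sopraaperto}|\nabla\extlift|^2\dx+\Hausdorff(S_{\extlift})<+\infty$, which immediately gives (i), and Ambrosio's compactness theorem yields the weak $L^2$ convergence of the approximate gradients and the lower semicontinuity $\liminf_k\Hausdorff(S_{\apprextliftk})\ge \Hausdorff(S_{\extlift})$; combining with the energy inequality as in the proof of Proposition \ref{prop:density} yields (ii) and (iii). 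Property (iv) then follows by splitting the jump contribution across $\overline\Om$ and $\sopraaperto\setminus\overline\Om$ and using $\Hausdorff(S_{\extlift}\cap \partial\Om)=0$, exactly as before. The essential closedness and Minkowski-content statement (v) is deduced from the density estimate of \cite[Lemma 4.9]{Carriero-Leaci} and \cite[Lemma 4.5]{Carriero-Leaci}, as in \cite[Proposition 5.3]{AT90}.

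Finally, (vi) is essentially free: since $e^{i\cdot}\colon\R\to\Suno$ is $1$-Lipschitz, we have the pointwise bound $|e^{i\apprextliftk(x)}-e^{i\extlift(x)}|\le |\apprextliftk(x)-\extlift(x)|$, so the $L^1$ convergence in (i) restricted to $\Om$ gives $\extu_k=e^{i\apprextliftk}\to e^{i\extlift}=u$ in $L^1(\Om;\Suno)$. The only genuinely new step compared to Proposition \ref{prop:density} is the reflection-based extension of a scalar $SBV^2\cap L^\infty$ function with control of the boundary jump; this is standard but is the one place where the proof is not simply a transcription, and I would regard it as the main (and only) technical obstacle.
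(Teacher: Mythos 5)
Your proof is correct, but it takes a noticeably longer route than the paper does. The paper's entire proof of Proposition~\ref{prop:density2} is a single sentence: it invokes \cite[Lemma 4.11]{Braides-Notes} to obtain in one stroke the extension $\widehat\varphi$ with $\Hausdorff(S_{\widehat\varphi}\cap\partial\Om)=0$ \emph{and} the approximating sequence $(\widehat\varphi_k)$ satisfying (i)--(v), and then observes that (vi) follows. In other words, in the scalar case this density-with-extension statement is already available off the shelf (it is exactly the Ambrosio--Tortorelli approximation lemma as collected in Braides's lecture notes), and the authors simply cite it rather than re-deriving it.

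What you do instead is re-run, essentially verbatim, the proof of Proposition~\ref{prop:density}: a reflection-based extension preserving the $L^\infty$ bound and boundary jump condition, followed by a Carriero--Leaci--type penalized minimization to produce $(\widehat\varphi_k)$, followed by Ambrosio's compactness/lower-semicontinuity and the density estimates of \cite{Carriero-Leaci} for (v), and then (vi) from the $1$-Lipschitz estimate $|e^{ia}-e^{ib}|\le|a-b|$. This is logically sound and shows how the black-box lemma is proved; the trade-off is that the paper's appeal to \cite[Lemma 4.11]{Braides-Notes} is the more economical and self-contained choice, since in the unconstrained scalar case nothing $\Suno$-specific is needed. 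One small attributional nit: you cite \cite[Lemma 4.3]{Carriero-Leaci} for existence of the penalized minimizer, but that reference is tailored to $S^k$-valued maps; in the scalar setting one would rather cite the original scalar Mumford--Shah existence result or, as the paper does, just use the packaged statement in \cite{Braides-Notes}. Your derivation of (vi) from (i) via the Lipschitz bound is exactly what the paper leaves implicit.
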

Note that (ii)-(iii) guarantee the convergence of $\MS(
\apprextliftk,1)$ to 
$\MS(\extlift,1)$ in $\sopraaperto$.
\begin{proof}
From  \cite[Lemma 4.11]{Braides-Notes}  we can 
find $\widehat\varphi$ as in the statement and $(\apprextliftk)\subset SBV^2(\sopraaperto)$ that satisfy (i)--(v). This in turn implies also (vi).
\end{proof}
\black

\subsection{Proof of Theorem \ref{thm1}}\label{subsec:proof_1}

	\noindent
If $\var_k\searrow0$ and $((u_k,v_k))_{k\ge1}\subset 
L^1(\Omega;\mathbb{S}^1)\times L^1(\Omega, [0,1])$ is
a sequence 
converging to $(u,v)$ in $L^1(\Omega;\mathbb{S}^1)\times L^1(\Omega)$,
arguing exactly as in  \cite{Focardi}, one gets
\begin{equation*}
	\liminfk
\AToneepsk(u_k,v_k)\ge 
\MSSuno
(u,v).
\end{equation*}
Therefore, we only need to prove the upper bound. Let 
$\var_k\searrow0$  and  $u\in SBV^2(\Omega;\mathbb{S}^1)$. 
We have to find a sequence 
$((u_k,v_k))\subset\domATgrande$ 
converging to $(u,1)$ in $L^1(\Omega;\mathbb{S}^1)\times L^1(\Omega)$ 
for which
\begin{equation}\label{up_bound}
	\limsupk \AToneepsk(u_k,v_k) \le 
\MSSuno
(u,1)\,.
\end{equation} We note that, in general, we cannot take $((u_k,v_k))\subset\domATpiccolo$,  as it happens for the example  described in the introduction.

We fix a bounded open 
set $\sopraaperto\subset\subset\R^\dimension$, with
$\Omega\subset\subset\sopraaperto$. By Proposition \ref{prop:density} it suffices to show \eqref{up_bound} for $u\in SBV^2(\sopraaperto;\mathbb{S}^1)$ with
\begin{equation}\label{Su}
	\Hausdorff(S_u\cap\partial \Omega)=0\,,\quad
	\Hausdorff(\overline S_u\setminus S_u)=0\,,\quad
	\Hausdorff(S_u\cap K)=\mathcal{M}^\codone(S_u\cap K)\,,
\end{equation}
for every compact set $K\subset \sopraaperto$.

By Theorem \ref{thm:dav-ign} and Remark \ref{rem:dav-ign} there exists a 
lifting $\varphi\in SBV^2(\sopraaperto)$ of $u$.
For any $0<\rho\ll1$ define
\begin{equation*}
	(S_u)_\rho:=\{x\in \sopraaperto\colon {\rm dist}(x, S_u)<\rho\}.
\end{equation*}
Let $0<\xi_k$ with $\lim_{k\to+\infty}{\xi_k}/\var_k=0$ be such that
%
%
 $(S_u)_{\xi_k}$ has Lipschitz boundary.  
By Remark \ref{rem:smooth_approx} applied to $A=(S_u)_{{\xi}_k}$ and $\delta=\var_k$, 
there exists $\varphi_k\in C^\infty((S_u)_{{\xi}_k})$ such that 
\begin{equation}\label{eq:approx_phik}
	\int_{(S_u)_{\xi_k}}|\varphi-\varphi_k|\dx<\var_k\,, \quad
	\int_{(S_u)_{\xi_k}}|\nabla \varphi_k|\dx\le |D\varphi|\big((S_u)_{\xi_k}\big)+\var_k\,,
\end{equation}
and, in the sense of $BV$-traces,
\begin{equation}\label{eq:phiktraces}
\varphi_k=
\varphi\quad \text{ a.e. on }\quad \partial (S_u)_{\xi_k}\,.
\end{equation}
We define $\altrolifting_k\in SBV^2(\sopraaperto)$ as follows:
\begin{equation*}
	\altrolifting_k:=\begin{cases}
		\varphi_k&\text{ in }(S_u)_{\xi_k}\\
		\varphi & \text{ in }\sopraaperto\setminus (S_u)_{\xi_k}
\end{cases}\,.
\end{equation*}
Then by \eqref{eq:approx_phik}
the sequence $(\omega_k)$ 
converges to $\varphi$ strictly in $BV(\sopraaperto)$ and
%
	{
%
\begin{equation*}
S_{
\altrolifting_k
} \subseteq S_\varphi.
\end{equation*}
We set 
\begin{equation*}
	u_k:=e^{i\altrolifting_k}=\begin{cases}
		e^{i\varphi_k}&\text{ in }(S_u)_{\xi_k}\\
		u & \text{ in }\sopraaperto\setminus (S_u)_{\xi_k}
	\end{cases}
\end{equation*}
so that 
\begin{equation}\label{eq:prima_inclusione_importante}
u_k \in W^{1,1}(\sopraaperto;\mathbb{S}^1)
\end{equation}
and $(u_k)$ converges to $u$ in $L^1(\Omega;\mathbb{S}^1)$. Since we modified $u$ only in a neighbourhood of $S_u$ in general $u_k\notin W^{1,2}(\widehat\Omega;\mathbb S^1)$, as it happens in the example in the introduction.

The construction of $(v_k)$ is done as in \cite{Chambolle}.
 Precisely, we define $(v_k)\subset W^{1,2}(\sopraaperto)$ as 
\begin{equation*}
	v_k(x):=\begin{cases}
		0& x \in  (S_u)_{\xi_k}\\
		1-{\rm exp}\left(-\frac{\dist(x, S_u) -\xi_k}{2\var_k}
\right) & x \in  \sopraaperto\setminus(S_u)_{\xi_k}
	\end{cases}\,,
\end{equation*}

Then  $(v_k)$ converges to 1 in $L^1(\sopraaperto)$ as $k \to +\infty$. 
Moreover 
$$
v_k|\nabla u_k|\in L^2(\sopraaperto)
$$
 and hence, using \eqref{eq:prima_inclusione_importante},
we get the crucial inclusion 
$$
(u_k,v_k)\subset
		\domATgrande.
$$
It remains to show \eqref{up_bound}. 
To this aim we have
\begin{equation}\label{bulk}
	\limsupk	\int_{\Omega}v_k^2|\nabla u_k|^2\dx\le
\limsupk \int_{\Omega\setminus(S_u)_{\xi_k}}|\nabla u|^2\dx\le \int_\Omega|\nabla u|^2\dx\,.
\end{equation}
Moreover 
\begin{equation}\label{surface}
\ModicaMortolaepsk(v_k,\Om)		
= \frac{\vert (S_u)_{\xi_k}\cap\overline\Omega\vert}{\var_k}
+ 
\ModicaMortolaepsk(v_k,\Omega\setminus(S_u)_{\xi_k}).
\end{equation}
{}From \eqref{Su} it follows
\begin{equation}\label{eq:faci}
	\limsupk \frac{\vert
(S_u)_{\xi_k}\cap\overline\Omega\vert}{2\var_k}
= \limsupk \frac{\vert 
(S_u)_{\xi_k}\cap\overline\Omega\vert}{2\xi_k}\frac{\xi_k}{\var_k}= 0.
\end{equation}

Now, we estimate the second term on the right hand-side of \eqref{surface}.
At almost all points in $\Omega\setminus(S_u)_{\xi_k}$ 
there holds 
$$
|\nabla v_k|=\frac1{2\var_k} {\rm exp}\left(-\frac{
\dist(x, S_u) 
-\xi_k}{2\var_k}\right) |\nabla \dist(x, S_u) 
|=\frac1{2\var_k }{\rm exp}\left(-\frac{
\dist(x, S_u) 
-\xi_k}{2\var_k}\right)\,,$$
from which, using also the coarea formula we 
deduce
\begin{equation}\label{surface2}
	\begin{split}
\ModicaMortolaepsk\left(v_k,\Omega\setminus(S_u)_{\xi_k}\right)
&=\frac1{2\var_k} 	\int_{\Omega\setminus(S_u)_{\xi_k}}  {\rm exp}\left(-\frac{
\dist(x, S_u) 
-\xi_k}{\var_k}\right)\dx
		\\
		&=\frac1{2\var_k }\int_{\xi_k}^{+\infty}  
		e^{-\frac {t-\xi_k}{\var_k}} 
\Hausdorff(\partial E_t)\dt\,,
	\end{split} 
\end{equation}
with $E_t:=\{x\in\Omega\colon {\rm dist}(x,{S_u})>t\}$.  
Next, having that $$\Hausdorff(\partial E_t)=
\frac{{\rm d}}{\dt} \bigg(\int_0^t\Hausdorff(\partial E_s)\ds\bigg)=
\frac{{\rm d}}{\dt} \vert (S_u)_t\cap\overline\Omega\vert,
$$
integrating by parts first, and using the change of variable  $s=\frac{t-\xi_k}{\var_k}$ we obtain 
\begin{equation}\label{surface3}
	\begin{split}
& \frac1{2\var_k} \int_{\xi_k}^{+\infty}  
e^{-\frac {t-\xi_k}{\var_k}} \Hausdorff(\partial E_t)\dt
\\ 
=& 
		-\frac1{2\var_k}\vert (S_u)_{\xi_k}\vert
+
\frac{1}{2\var^2_k}\int_{\xi_k}^{+\infty} e^{-\frac {t-\xi_k}{\var_k}}\vert
(S_u)_t\cap\overline\Omega\vert\dt
\\
=& 
		-\frac1{2\var_k}\vert (S_u)_{\xi_k}
\vert+\int_{0}^{+\infty} (s+\var_k)e^{-s}\frac{ \vert (S_u)_{s\var_k
+\xi_k}\cap\overline\Omega\vert}{2(s\var_k+\xi_k)}\ds.
	\end{split}
\end{equation}
Gathering together \eqref{surface}--\eqref{surface3} we find
\begin{equation}\label{surface4}
\ModicaMortolaepsk(v_k,\Om)
=\frac1{2\var_k}\vert (S_u)_{\xi_k}\cap\overline\Omega\vert
+\int_{0}^{+\infty} (s+\var_k)e^{-s} \frac{ \vert
(S_u)_{s\var_k+\xi_k}\cap\overline\Omega\vert}{2(s\var_k+\xi_k)}\ds\,.
\end{equation}
{}From \eqref{Su} it follows
\begin{equation*}
	\limsupk \frac{ \vert
(S_u)_{s\var_k+\xi_k}\cap\overline\Omega\vert}{2(s\var_k+\xi_k)}= 
\Hausdorff(S_u\cap\overline\Omega)\,.
\end{equation*}
This, together with the fact that $\int_0^{+\infty}(s+\var_k)e^{-s}\ds\to1$ as $k\to+\infty$,
imply
\begin{equation}\label{surface5}
	\limsupk
\ModicaMortolaepsk(v_k,\Om)
\le \Hausdorff(S_u\cap\overline\Omega)\,.
\end{equation}
Eventually, combining \eqref{bulk}, \eqref{eq:faci} and \eqref{surface5} 
we infer \eqref{up_bound}.
\qed

\subsection{Proof of Theorem \ref{thm2}}\label{subsec:proof_2}

\noindent
\textit{Step 1: Lower bound}. Let $\var_k\searrow0$ as $k\to+\infty$.
We have to show that, for every sequence $((u_k,v_k))_{k\ge1}\subset L^1(\Omega;\mathbb{S}^1)\times L^1(\Omega)$ 
converging to $(u,v)$ in $L^1(\Omega;\mathbb{S}^1)\times L^1(\Omega)$,
\begin{equation}\label{lw_bound2}
	\liminfk
\ATtwoepsk(u_k,v_k)\ge 
\MSlift(u,v)\,.
\end{equation}
We may assume 
\begin{equation}\label{eq:assume_C}
\sup_{k \in \NN} 
\ATtwoepsk(u_k,v_k)\leq C <+\infty
\end{equation}
 so that $(u_k,v_k)\in\domATpiccolo$, $v=1$ a.e. in $\Omega$
and, 
up to a not relabelled subsequence, 
\begin{equation*}
	\liminfk
\ATtwoepsk
(u_k,v_k)=	\limk
\ATtwoepsk
(u_k,v_k),
\end{equation*}
\begin{equation*}
\liminfk 
\ModicaMortolaepsk(v_k, \Om)
= 
\limk
\ModicaMortolaepsk(v_k, \Om).
\end{equation*}
From Theorem~\ref{thm:Ambrosio-Tortorelli} 
and the decoupling property \eqref{eq:decoupling},
it follows that 
\begin{equation*}
	\begin{split}
&
	\liminf_{k\to+\infty}
\ATtwoepsk
(u_k,v_k)\ge \int_\Omega|\nabla u|^2\dx+\Hausdorff(S_u)\,,
\\
&	\liminf_{_k\to+\infty}\int_{\Omega}v_k^2|\nabla u_k|^2\dx\ge  
\int_\Omega|\nabla u|^2\dx\,,
\\	
&\liminfk 
\ModicaMortolaepsk(v_k, \Om)
\ge \Hausdorff(S_u)\,.
	\end{split}
\end{equation*}
In particular, from \eqref{eq:assume_C}, $u\in SBV^2(\Omega;\mathbb{S}^1)$.
Hence it remains to show 
\begin{equation}\label{liminf-MM}
	\limk
\ModicaMortolaepsk(v_k, \Om)
\ge \minvalueu2.
\end{equation}
For every $k\ge1$ we start by 
selecting a lifting
$\varphi_k\in W^{1,2}(\Omega)$ of $u_k$.
Since $|\nabla u_k|=|\nabla \varphi_k|$ we have
\begin{equation*}
	+\infty>C\ge \ATtwoepsk(u_k,v_k)=\int_\Omega\left(v_k^2|\nabla \varphi_k|^2+ \var_k|\nabla v_k|^2+\frac{(v_k-1)^2}{4\var_k}\right)\dx\,.
\end{equation*}
Thus, using the coarea formula,
\begin{equation}\label{eq:etaprimo}
C\geq \MMepsk(v_k, \Om)
\ge \int_\Omega (1-v_k)|\nabla v_k|\dx \\
= \int_0^1(1-t)\Hausdorff(\partial^* F_k^t)\dt
\end{equation}
for any $k \geq 1$, where $F_k^t:=\{x\in \Om:v_k(x)\leq t\}$. Let $\etaprimo,\etasecondo
\in(0,1)$, $\etaprimo<\etasecondo$ be fixed.
By \eqref{eq:etaprimo} and the mean value theorem 
there exists $\livellok
\in (\etaprimo,
\etasecondo)$ 
such that 
\begin{equation}\label{eq:etaeta}
C\ge (1-\etasecondo)
(\etasecondo-\etaprimo)
\Hausdorff(\partial^*F_k^{\livellok})\,.
\end{equation}
Moreover 
\begin{equation}\label{eq:due}
C\ge \int_\Omega v_k^2|\nabla\varphi_k|^2\dx\ge 
(\etaprimo)^2\int_{\Omega}
\chi_{\Omega\setminus F^{\livellok}_k}
|\nabla\varphi_k|^2
\dx\,.
\end{equation}
Then, setting 
$$
\nuovoliftingk
:=\varphi_k\chi_{\Omega\setminus F^{\livellok}_k}\in 
SBV^2(\Omega),
$$
 we have $S_{\nuovoliftingk}\subset \partial^*F_k^{\livellok}$  and,
for the absolutely continuous parts of the
gradients,  $\nabla\nuovoliftingk
=\nabla \varphi_k\chi_{\Omega\setminus F_k^{\livellok}}$ and so,
from \eqref{eq:due} and \eqref{eq:etaeta}, 
\begin{equation}\label{eq:pallino}
	\int_\Omega|\nabla \nuovoliftingk
|^2\dx+ \Hausdorff(S_{\nuovoliftingk
})\le C(\etaprimo, \etasecondo)
\end{equation}
for some $C(\etaprimo, \etasecondo)>0$ 
depending on $\etaprimo,\etasecondo$ and independent of $k$. Let also 
$$
\overline 
u_k:=e^{i\nuovoliftingk
}=u_k\chi_{\Omega\setminus F_k^{\livellok}}+(1,0)\chi_{F_k^{\livellok}},
$$
which, by 
\eqref{eq:etaeta}, are 
uniformly bounded in $BV(\Om; \Suno)$. Since 
\begin{equation}\label{eq:zero_in_measure}
\vert F_k^{\livellok}\vert \to 0,
\end{equation}
the sequence $(\overline u_k)$ weakly star converges to $u$ 
in $BV(\Omega;\mathbb{S}^1)$. Hence, using \eqref{eq:pallino},
we can apply Theorem \ref{teo:compactness} 
to the sequence
$(\nuovoliftingk
)_{k\ge1}$ (with $p=2$) and get, for a not-relabelled subsequence, a 
Caccioppoli
partition $(E_i)_{i\in \mathbb{N}}$ of $\Omega$, sequences $(d^{\pip}_k)_{k\ge1}\subset\mathbb{Z}$ 
for any integer $i\ge1$ and a lifting $\varphi_\infty\in GSBV^2(\Omega)$ 
of $u$, such that 
	\begin{equation*}
	\begin{split}
		&\lim_{k \to +\infty}(\nuovoliftingk
(x)-2\pi d^{\pip}_k)= \varphi_\infty(x)\qquad 
\forall i \in \NN, 
\text{ for a.e. }x\in E_{i}\,,\\
		&\lim_{k \to +\infty}|\nuovoliftingk
(x)-2\pi d^{\pip}_k|=+\infty\qquad \forall i \in \NN, 
\text{ for a.e. }x\in \Om\setminus E_i.
	\end{split}
\end{equation*}
Again, using
\eqref{eq:zero_in_measure},
the same holds for $\varphi_k$, i.e.,
	\begin{equation}\label{eq:conv-in-Ei}
	\begin{split}
		&\lim_{k \to +\infty}(\varphi_k(x)-2\pi d^{\pip}_k)= \varphi_\infty(x)
\qquad 
\forall i \in \NN, 
\text{ for a.e. }x\in E_{i}\,,\\
		&\lim_{k \to +\infty}|\varphi_k(x)-2\pi d^{\pip}_k|= +\infty\qquad 
\forall i \in \NN, 
\text{ for a.e. }x\in \Om\setminus E_{i}.
	\end{split}
\end{equation}
Then \eqref{liminf-MM} follows if we show  
 \begin{equation}\label{liminf-MM-bis}
 	\liminfk \ModicaMortola(v_k, \Om) 
\ge
\Hausdorff(S_{\varphi_\infty}),
 \end{equation}
since, being $\varphi_\infty$ a lifting of $u$, we have 
$$
\Hausdorff(S_{\varphi_\infty}) \geq m_2[u].
$$
 For any integer $K\ge1$ we consider the truncated function 
 \begin{equation}\label{eq:tru}
 	\trunkphi_k^{i,K}:=((\varphi_k-2\pi d_k^{\pip})\wedge K)\vee (-K)\in W^{1,2}(\Omega)\,,
 \end{equation}
and 
$$
\phi_k^{i,K}:= \trunkphi_k^{i,K}\chi_{\Omega\setminus F_k^{\livellok}}.
$$

Since $S_{\phi_k^{i,K}}\subset\partial^*F_k^{\livellok}$ and $\jump{\phi_k^{i,K}}\le K$, we have
\begin{equation*}\begin{split}
|D\phi_k^{i,K}|(\Omega)&= \int_\Omega|\nabla \nuovoliftingk
|\chi_{\{|\varphi_k-2\pi d_k^{\pip}|<K\}}\dx+ \int_{S_{\phi_k^{i,K}}}\jump{\phi_k^{i,K}}\,{\rm d}\mathcal{H}^{n-1} \\
&\le \int_\Omega|\nabla \nuovoliftingk
|\dx+ K\mathcal{H}^{n-1}(\partial^* F_k^{\livellok})\le C\,.
	\end{split}
\end{equation*}
Hence, up to a subsequence (depending on $K$), $(\phi_k^{i,K})$ converges 
to some $\phi^{i,K}_\infty$ in $L^1(\Omega)$. 
Moreover 
from \eqref{eq:conv-in-Ei} it holds $\phi^{i,K}_\infty
:= (\varphi_\infty\wedge K)\vee (-K)$ in $E_i$ and 
$\Omega\setminus E_i=F_i^+\cup F_i^-$ are such that $\phi^{i,K}_\infty=\pm K$ in $F_i^{\pm}$. 
As $|F_k^{\livellok}|\to 0$ it follows that $\varphi_k^{i,K}$ 
converges to $\phi_\infty^{i,K}$ in $L^1(\Omega)$.
Hence the sequence
 $((\varphi_k^{i,K},v_k))$ converges
to $(\phi_\infty^{i,K},1)$ in $L^1(\Omega)\times L^1(\Omega)$ and 
\begin{equation*}
	\begin{split}
\int_\sottoaperto\left(v_k^2|\nabla \varphi_k^{i,K}|^2+ \var_k|\nabla v_k|^2+\frac{(v_k-1)^2}{4\var_k}\right)\dx\le C\,,
	\end{split}
\end{equation*}
for any open set $\sottoaperto\subset\Omega$, for some 
$C>0$ independent of $k$. Thus, from  the decoupling property \eqref{eq:decoupling},
 \begin{equation}\label{conv-AT-decoupling}
 	\begin{split}
 		&	\liminfk \int_\sottoaperto
v_k^2|\nabla \varphi_k^{i,K}|^2\dx\ge \int_\sottoaperto|\nabla \phi_\infty^{i,K}|^2\dx\,,\\[1em]
 	&	\liminfk \ModicaMortola_{\var_k}(v_k,\sottoaperto)\ge
 	\Hausdorff(S_{\phi_\infty^{i,K}}\cap \sottoaperto).
 	\end{split}
 \end{equation}
In particular $\phi_\infty^{i,K}\in SBV^2(\Omega)$.
We fix an integer $N\geq 1$ and set,
for all $i=1,\dots,N$,
\begin{align*}
	\Sigma_i^K:=(S_{\phi_\infty^{i,K}}\cap\partial^*E_i)\cap \Om\,,\qquad  S^K_i:=S_{\phi^{i,K}_\infty}\setminus \partial^*E_i\,, 
\end{align*}
\begin{equation*}
	\Sigma^{N,K}:=\bigcup_{i=1}^N \Sigma_i^K\,,
	\qquad S^{N,K}:=\bigcup_{i=1}^N S_i^K\,.
\end{equation*}
{We now argue exactly as in the proof of Lemma \ref{lem:ricoprimenti}, so, given $\delta 
	\in (0,1)$, after defining the families of balls $\mathcal B$ and $\mathcal B_\indice$
 ($\indice
=1,\dots,2N$), we arrive at 
	\begin{align}\label{key66bis_MM}
		\sum_{B\in \mathcal B\cap \mathcal B_\indice
 }\Hausdorff(B\cap S_\indice^K)&
		\geq \Hausdorff(S_\indice^K)- \frac{\delta 
\Hausdorff
(S^{N,K}\cup \Sigma^{N,K})}{1-\delta}\,,
	\end{align}
	for all $\indice=1,\dots,N$, and 
	\begin{align}\label{key67bis_MM}
		\sum_{B\in \mathcal B\cap \mathcal B_{N+h} }
\Hausdorff(B\cap \Sigma_h^K)\geq \Hausdorff
(\Sigma_h^K)- \frac{\delta \Hausdorff
(S^{N,K}\cup \Sigma^{N,K})}{1-\delta}\,,
	\end{align}
	for all $h=1,\dots,N$.
	From \eqref{conv-AT-decoupling}  we know that, for all $\indice
=1,\dots,N$,
	\begin{align}\label{S_nbis_MM}
		\liminfk \ModicaMortola_{\var_k}(v_k,B)\ge 	
\Hausdorff(S_\indice^K\cap B)\,,
	\end{align}
for any $B\in \mathcal B\cap\mathcal B_j$,	and for all $h=1,\dots,N$
	\begin{align}\label{Sigma_nbis_MM}
		\liminfk \ModicaMortolaepsk(v_k,B)\ge 	
\Hausdorff(\Sigma_h^K\cap B) \,,
	\end{align}
	for any ball $B\in \mathcal B\cap\mathcal B_{N+h}$.
	So, summing \eqref{S_nbis_MM} over all $B\in \mathcal B\cap \mathcal B_\indice$ and \eqref{Sigma_nbis_MM} over all $B\in \mathcal B\cap \mathcal B_{N+h}$, and then over $\indice
,h$ respectively, we conclude again, using \eqref{key66bis_MM} and \eqref{key67bis_MM},
	\begin{equation*}
		\begin{split}
 \liminfk \ModicaMortola_{\var_k}(v_k,\Om)&=
\lim_{k\rightarrow +\infty}\ModicaMortola_{\var_k}(v_k,\Om)\\&\ge
\sum_{\indice=1}^N
\Hausdorff(S_\indice^K)+\sum_{h=1}^N
\Hausdorff(\Sigma_h^K) - \frac{2\delta N 
\Hausdorff(S^{N,K}\cup \Sigma^{N,K})}{1-\delta}
		\end{split}
	\end{equation*}
	and by the arbitrariness of $\delta>0$, 
	$$\limk\ModicaMortola_{\var_k}(v_k,\Om)\ge
\Hausdorff(S^{N,K}\cup\Sigma^{N,K})\,.$$  
	{The left hand-side in the above inequality is a limit, hence it does not depend on the subsequence, and so in particular on $K$ and $N$.}
	Letting first $N\rightarrow +\infty$ and then $K\to +\infty$ 
we finally get
	\begin{align*}\liminfk {\rm MM}_{\var_k}(v_k,\Omega)\geq \Hausdorff(S_{\varphi_\infty})\,,
	\end{align*}
and \eqref{liminf-MM} follows.}

\medskip

\noindent
\textit{Step 2: Upper bound}. Let  $\var_k\searrow0$ and 
 $u\in SBV^2(\Omega;\Suno)$. We 
have to find a sequence $((u_k,v_k))\subset
\domATpiccolo$ converging to $(u,1)$ in $L^1(\Omega;\Suno)\times L^1(\Omega)$ and
\begin{equation}\label{up_bound2}
	\limsupk 
\ATtwoepsk
(u_k,v_k) \le \MSlift
(u,1)\,.
\end{equation}
We notice that we cannot follow the strategy used in the proof
of the upper bound in Theorem \ref{thm1}, since that construction cannot
ensure  the functions $u_k$ to belong to $W^{1,2}(\Om;\mathbb{S}^1)$.

By Corollary \ref{cor:existence-minimizer} 
we can select a jump minimizing lifting 
$\varphi \in GSBV^2(\Omega)$ of $u$,  
$u=e^{i\varphi}$ and 
\begin{equation}\label{quasi-min}
	\Hausdorff(S_{\varphi})= \minvalueu2.
\end{equation}
The main difference with the construction in Theorem \ref{thm1} is that we will modify $u$ in a neighbourhood of $S_\varphi$ instead of $S_u$.
We divide the proof into two cases.

\textit{Case 1:} $\varphi \in L^\infty(\Omega)$. 
Let $\sopraaperto$, with $\Omega\subset\subset\sopraaperto\subset\subset\R^2$, be open. 
Since $\varphi \in SBV^2(\Omega)\cap  L^\infty(\Omega)$, by Proposition~\ref{prop:density2} we can assume, without loss of generality, 
that $\varphi\in SBV^2(\sopraaperto)$ with 
\begin{equation}\label{Svarphi}
	\Hausdorff(S_{\varphi}\cap\partial \Omega)=0\,,\quad
	\Hausdorff(\overline S_{\varphi}\setminus S_{\varphi})=0\,,\quad
	\Hausdorff(S_{\varphi}\cap K)=\mathcal{M}^\codone(S_{\varphi}\cap K)\,,
\end{equation}
for every compact set $K\subset \sopraaperto$.
In this way, arguing as in \cite{AT90,AT92} we can construct $(\varphi_k,v_k)\in W^{1,2}(\sopraaperto) \times W^{1,2}(\sopraaperto)$ 
such that $(\varphi_k,v_k)\to(\varphi,1)$ in $L^1(\sopraaperto) \times L^1(\sopraaperto)$ and 
\begin{equation}\label{at-varphi}
\limsupk {\rm  AT}_{\var_k}(\varphi_k,v_k)
\le {\rm MS}(\varphi,1)\,.
\end{equation}
Next we let $u_k:=e^{i\varphi_k}\in W^{1,2}(\sopraaperto;\Suno)$, so that $(u_k,v_k)\to(u,1)$ in $L^1(\sopraaperto;\Suno)\times L^1(\sopraaperto)$. Moreover, from \eqref{quasi-min}, \eqref{at-varphi}, $|\nabla u_k|=|\nabla\varphi_k|$ and $|\nabla u|=|\nabla\varphi|$
we get $${\rm AT}_{\var_k}(\varphi_k,v_k)=\ATtwoepsk
(u_k,v_k)\,,\quad {\rm MS}(\varphi,1)={\rm MS}_{\rm lift}(u,1)$$ and so
\begin{equation*}
	\limsupk \ATtwoepsk
(u_k,v_k) \le {\rm MS}_{\rm lift}(u,1)\,.
\end{equation*}

\textit{Case 2:} $\varphi \notin L^\infty(\Omega)$. 
For $N\in\mathbb{N}$, $N\ge1$ we let $\varphi^{\pNp}
:=\varphi\wedge N\vee(-N)\in L^\infty(\Omega)$. Then, as $N\to+\infty$, we have 
$$\varphi^{\pNp}\to\varphi\quad \text{ in } L^1(\Omega)\,,\quad 
\nabla \varphi^{\pNp}\to \nabla\varphi\quad \text{ in } L^2(\Omega;\R^{n})\,,$$
so that 
$$u^{\pNp}:=e^{i\varphi^N}\to e^{i\varphi}=u\quad \text{ in } L^1(\Omega;
\Suno)\,,\quad\nabla u^{\pNp}\to \nabla u\quad \text{ in } L^2(\Omega;\R^{n\times n})\,.$$
In addition,
\begin{equation*}
	\Hausdorff(S_{\varphi^{\pNp}})\le\Hausdorff(S_{\varphi})\,.
\end{equation*}
This in turn implies 
\begin{equation}\label{MS-varphiN}
	\begin{split}
	\limsup_{N\to+\infty} \left(\int_\Omega|\nabla \varphi^{\pNp}|^2\dx+ 
\Hausdorff(S_{\varphi^{\pNp}})\right)&\le \int_\Omega|\nabla \varphi|^2\dx+
\Hausdorff(S_{\varphi})
\\&\le \int_\Omega|\nabla u|^2\dx+
\minvalueu2.
	\end{split}
\end{equation}
For each $N\ge 1$ we can argue as in case 1 and find a sequence $(u_k^{\pNp},v_k^{\pNp})\to (u^{\pNp},1)$ in $L^1(\Omega;\Suno)\times L^1(\Omega)$ 
satisfying
\begin{equation}\label{at-varphiN}
	\limsupk \ATtwoepsk
(u_k^{\pNp},v_k^{\pNp})
	\le \int_\Omega|\nabla \varphi^{\pNp}|^2\dx+ \mathcal{H}^{n-1}(S_{\varphi}^{\pNp})\,.
\end{equation}
Combining \eqref{MS-varphiN} with \eqref{at-varphiN} we have 
\begin{equation*}
	\limsup_{N\to+\infty}	\limsupk \ATtwoepsk
(u_k^{\pNp},v_k^{\pNp})
	\le \int_\Omega|\nabla u|^2\dx+
	\minvalueu2.
\end{equation*}
Now we conclude using
 a diagonal argument. Namely, for any fixed $N\ge1$ there exists $k_N\ge1$ such that for $k\ge k_N$:
	\begin{itemize}
		\item $\|u_k^{\pNp}-u^{\pNp} \|_{L^1(\Omega;\mathbb S^1)}+\|v_k^{\pNp}-1 \|_{L^1(\Omega)}\le \frac1N$;
		\item $\ATtwoepsk(u_k^{\pNp},v_k^{\pNp})\le \displaystyle \int_\Omega|\nabla \varphi^{\pNp}|^2\dx+ \mathcal{H}^{n-1}(S_{\varphi}^{\pNp})+\frac1N$.
	\end{itemize}
Without loss of generality we may assume $k_{N+1}\ge k_N$. Now, for every $k\ge1$, we take $(u_k,v_k):=(u_k^{\pNp},v_k^{\pNp})$ if $k\in [k_N,k_{N+1})\cap\mathbb N$ and the proof is concluded.
\qed

\section*{Acknowledgements}
R. Marziani has received funding from the European Union's Horizon research and innovation program under the Marie Skłodowska-Curie agreement No 101150549. 
The research that led to the present paper was partially supported by
a grant from the GNAMPA group of INdAM. The first and third authors also acknowledge the partial financial support of the PRIN project 2022PJ9EFL "Geometric Measure Theory: Structure of Singular
Measures, Regularity Theory and Applications in the Calculus of Variations'', PNRR Italia Domani, funded
by the European Union via the program NextGenerationEU (Missione 4
Componente 2) CUP:E53D23005860006. Views
and opinions expressed are however those of the authors only and do not necessarily reflect those
of the European Union or the European Research Executive Agency. Neither the European Union
nor the granting authority can be held responsible for them.


\begin{thebibliography}{150}
		\bibitem{Acerbi-DalMaso}
	E.~Acerbi, and G.~Dal~Maso, New lower semicontinuity results for polyconvex integrals, \textit{Calc. Var. Partial Differential Equations} \textbf{2} (1994), 329-371.
	%
	\bibitem{ACP}
	R.~Alicandro, M.~Cicalese, and M.~Ponsiglione, Variational equivalence between GinzburgLandau, XY spin systems and screw dislocations energies, \textit{Indiana Univ. Math. J.} \textbf{60} (2011), 171–-208.
	%
	\bibitem{ABS}
	R.~Alicandro, A.~Braides, and J.~Shah, Free-discontinuity problems via functionals involving the $L^1$-norm of the gradient and their approximation, \textit{Interfaces Free Bound.} \textbf{1} (1999), 17--37
	%
	\bibitem{ADGP-Metastability}
	R.~Alicandro, L.~De~Luca, A.~Garroni, and M.~Ponsiglione, Metastability and dynamics of discrete topological
	singularities in two dimensions: a $\Gamma$-convergence approach, \textit{Arch. Rational Mech. Anal.} \textbf{214} (2014), 269--330.
	\bibitem{Ambrosio}{L.~Ambrosio, {On the lower semicontinuity of quasi-convex integrals in $SBV(\Omega;\R^k)$}, \textit{Nonlinear Anal.} {\bf 23} (1994), 405--425.}
	\bibitem{AmFuPa}{L.~Ambrosio, N.~Fusco, D.~Pallara, {Functions of Bounded Variation and Free Discontinuity Problems}, Oxford Mathematical Monographs, \textit{The Clarendon Press Oxford University Press, New York, 2000}.
}
	\bibitem{AT90}{L.~Ambrosio, V.~M.~Tortorelli, {Approximation of functionals depending on jumps by elliptic functionals via $\Gamma$-convergence}, \textit{Comm. Pure Appl. Math.} {\bf 43} (1990) no.8, 999--1036.
	}
	\bibitem{AT92}{L.~Ambrosio, V.~M.~Tortorelli, {On the approximation of free-discontinuity problems}, \textit{Boll. Un. Mat. Ital.} (7) {\bf 6-B} (1992) no.1, 105--123.
	}
		\bibitem{BEPS}
	G.~Bellettini, A.~Elshorbagy, M.~Paolini, and R.~Scala, On the relaxed area of the graph of discontinuous maps from the plane to the plane taking three values with no symmetry assumptions, \textit{Ann. Mat. Pura Appl.} \textbf{199}, 445-477 (2020).
	%
	
	\bibitem{BES}{ G.~Bellettini, A.~Elshorbagy and R.~Scala,}
	{The $L^1$-relaxed area of the graph of the vortex map: optimal 
		lower bound}, 
\textit{	Nonlinear Anal. }\textbf{256} (2025), Paper No. 113803.
		%
	\bibitem{BMS}{G.~Bellettini, R.~Marziani,  R.~Scala, 
\emph{A non-parametric Plateau problem with partial free boundary},
J. Echole Polytechnique -- Mathématiques {\bf 11} (2024), pp. 1035-1098.}
%
	\bibitem{BP}
G.~Bellettini, and M.~Paolini On the area of the graph of a singular map from the plane to the plane taking three values, \textit{Adv. Calc. Var.} \textbf{3} (2010), 371-386.
%

	\bibitem{BSS} 
G. Bellettini, R. Scala and  G. Scianna, {$L^1$-relaxed area of graphs of $\mathbb S^1$-valued
	Sobolev maps 
	and its countably subadditive envelope},
{\it Rev. Mat. Iberoam.} {\bf 40} (2024), 2135-2178

	\bibitem{Braides-Notes} A.~Braides, {Approximation of Free-Discontinuity Problems}, {\it Lecture Notes in Math. }{\bf 1694}, {\it Springer Verlag, Berlin, 1998.}
	\bibitem{Brezis-Mironescu}  H.~Brezis, P.~Mironescu, 
{Sobolev Maps to the Circle}, {\it Progress in Nonlinear Differential 
Equations and their Applications }{\bf 96}, {\it Birkh\"auser/Springer, New York, 2021.}
	
	\bibitem{Brezis-Mironescu2} H.~Brezis, P.~Mironescu,
	{The Plateau problem from the perspective of optimal transport},	
	{\it Comptes Rendus. Mathématique,} \textbf{357}(7) (2019), 597--612.
	
	
	
	\bibitem{Carriero-Leaci}{M.~Carriero, A.~Leaci, 
		{$S^k$-valued maps minimizing the $L^p$ norm of the gradient with free discontinuities}, {\it Ann. Sc. Norm. Sup. Pisa Cl. Sci.} {\bf 18} (1991), 321--352.
	}
\bibitem{Chambolle}{A.~Chambolle, {An approximation result for special functions with bounded deformation}, {\it J. Math. Pures Appl. }{\bf 83} 
(2004), 929--954.
}
%
\bibitem{CC}{A.~Chambolle, V. Crismale {Compactness and lower semicontinuity in GSBD}, \textit{J. Eur. Math. Soc.} {\bf 23}, 701--719 (2021).
}
\bibitem{CFM} Chambolle A., Ferrari L. A. D., Merlet B., \textit{A phase-field approximation of the Steiner problem in dimension two}, Advances in Calculus of Variations, vol. 12, no. 2,  (2019), 157-179.
%
	\bibitem{CGMarz}
S.~Conti, A.~Garroni, and R.~Marziani, Line-tension limits for line singularities and application to the mixed-growth case, \textit{Calc. Var.} \textbf{62}, 228 (2023).
%
\bibitem{CGO15}
S.~Conti, A.~Garroni, and M.~Ortiz, The line-tension approximation as the dilute limit of linear-elastic dislocations, \textit{Arch. Rational Mech. Anal.} \textbf{218} (2015) 699--755.
%
\bibitem{CDLS} V.~Crismale, L.~De~Luca, R.~Scala, Approximation of topological singularities through free discontinuity functionals: the critical and super-critical regimes, \textit{arXiv:2407.08285} (2024).
%
\bibitem{Dav-Ign}{J.~Dávila, R.~Ignat, {Lifting of $BV$ functions with values in $S^1$}, {\it C. R. Math. Acad. Sci. Paris} {\bf 337} (2003), 159--164.
}
%
	\bibitem{DGP}
L.~De~Luca, A.~Garroni, and M.~Ponsiglione, $\Gamma$-convergence analysis of systems of edge dislocations: the self energy regime, \textit{Arch. Rational Mech. Anal.} \textbf{206} (2012), 885--910.
%
	\bibitem{DLSVG}{L. De Luca,  R. Scala, N. Van Goethem, {A new approach to topological singularities via a weak notion of Jacobian for functions of bounded variation}, {\it Indiana Univ. Math. J.} \textbf{73}(2) (2024), 723--779.}
	%
	\bibitem{FDW} Ferrari L., Dirks C., Wirth B., \textit{Phase field approximations of branched transportation problems}, Calc. Var. 59, 37 (2020).
	
	\bibitem{Focardi} M.~Focardi, {On the variational approximation of free-discontinuity problems in the vectorial case}, {\it Math. Models Methods Appl. Sci.} {\bf 11} (2001), 663--684.
%
	%

	%
	%
	\bibitem{Federer:69}
	H. Federer, 
	``Geometric Measure Theory'',
	{\it Die Grundlehren der mathematischen Wissenschaften, Vol. 153,
	Springer-Verlag, New York Inc., New York, }1969.
	\bibitem{manuel}M. Friedrich, {A compactness result in $GSBV^p$ and applications to $\Gamma$-convergence for free discontinuity problems}, {\it Calc. Var. PDE} \textbf{ 58}(86) (2019).
	
	\bibitem{GLP}
	A.~Garroni, G.~Leoni, and M.~Ponsiglione, Gradient theory for plasticity via homogenization of discrete dislocations, \textit{J. Eur. Math. Soc.} \textbf{12} (2010), 1231--1266.
	%
	\bibitem{GMS}
	A.~Garroni, R.~Marziani, and R.~Scala, Derivation of a Line-Tension Model for Dislocations from a Nonlinear Three-Dimensional Energy: The Case of Quadratic Growth, \textit{SIAM J. Math. Anal.} \textbf{53}(4) (2021), 4252--4302.
	%
	\bibitem{Giaq}
	M.~Giaquinta, G.~Modica, and J.~Souček, “Cartesian Currents in the Calculus of Variations II. Variational Integrals”, \textit{Ergebnisse der Mathematik und ihrer Grenzgebiete, Vol. 38, Springer-Verlag, Berlin-Heidelberg,} 1998.
	%
	\bibitem{Giusti}
	E.~Giusti, “Minimal Surfaces and Functions of Bounded Variation”, \textit{Birkhäuser, Boston }1984.
	%
		\bibitem{MMS}
	A. Marchese, A. Massaccesi, S. Stuvard, and R. Tione, {A multi-material transport problem with arbitrary marginals} Calc. Var. \textbf{60}, 88 (2021).
	\bibitem{MMT}A. Marchese, A. Massaccesi, and R. Tione,
	A Multimaterial Transport Problem and its Convex Relaxation via Rectifiable $G$-currents, SIAM j. math. anal. \textbf{51}(3), (2019).
	
	
	%
	%
	\bibitem{Ponsiglione}
	M.~Ponsiglione, Elastic energy stored in a crystal induced by screw dislocations: from discrete to continuous,
	\textit{SIAM J. Math. Anal.} \textbf{39} (2007), 449--469.
	%
	%
	%
	\bibitem{Scala}
	R.~Scala, Optimal estimates for the triple junction function and other surprising aspects of the area functional, \textit{Ann. Sc. Norm. Super. Pisa Cl. Sci.} (5) \textbf{20}(2), 491--564. (2020).
	%
	%
	
		\bibitem{SS} R. Scala, G. Scianna, {On the $L^1$-relaxed area of graphs of
		$BV$ piecewise constant maps taking three values},  
	{\it Adv. Calc. Var.} \textbf{18}(2), 339--365 (2025). 
	
\end{thebibliography}
\end{document}